\newcommand{\vertiii}[1]{{\left\vert\kern-0.25ex\left\vert\kern-0.25ex\left\vert #1 
    \right\vert\kern-0.25ex\right\vert\kern-0.25ex\right\vert}}
\newtheorem{theorem}{Theorem}[section]
\newtheorem{proposition}[theorem]{Proposition}
\newtheorem{definition}[theorem]{Definition}
\newtheorem{corollary}[theorem]{Corollary}
\newtheorem{lemma}[theorem]{Lemma}
\newtheorem{remark}[theorem]{Remark}
\newtheorem{example}[theorem]{Example}
\theoremstyle{definition}
           \newcommand\vfp{\vartheta}
\def\L{\mathcal{L}}
\def\C{\mathcal{C}}
\def\M{\mathcal{M}}
\def\KK{\mathbb{K}}
\def\NN{\mathbb{N}}
\def\RR{\mathbb{R}}
\def\TT{\mathbb{T}}
\def\CC{\mathbb{C}}
\def\FF{\mathbb{F}}
\DeclareMathOperator{\divv}{div}
\DeclareMathOperator{\tr}{Tr}
\newcommand\vf{\varphi}
\def\M{\mathcal{M}}
\let\eps=\varepsilon
\def\B{\mathcal{B}}
\def\RR{{\mathbb R}}
\def\1{{{\mathit 1} \!\!\>\!\! I} }
\newcommand{\param}{{ \varpi}}
\DeclareMathOperator{\Diff}{Diff}
\numberwithin{equation}{section}
\newcommand{\cF}{\mathcal F}
\newcommand{\cC}{\mathcal C}
\newcommand{\cM}{\mathcal M}
\newcommand{\bN}{\mathbb N}
\newcommand{\bR}{\mathbb R}
\newcommand{\ve}{\varepsilon}
\newcommand\Id{{\mathds{1}}}
\newcommand\boeta{\boldsymbol{\eta}}
\newcommand\btheta{\boldsymbol{\theta}}
\begin{document}
\title{globally coupled Anosov diffeomorphisms:\\ 
Statistical properties}
\author{Wael Bahsoun}
\address{Department of Mathematical Sciences, Loughborough University,
Loughborough, Leicestershire, LE11 3TU, UK}
\email{W.Bahsoun@lboro.ac.uk}
\author{Carlangelo Liverani}
\address{
Dipartimento di Matematica\\
II Universit\`{a} di Roma (Tor Vergata)\\
Via della Ricerca Scientifica, 00133 Roma, Italy.}
\email{{\tt liverani@mat.uniroma2.it}}
\author{Fanni M. S\'elley}
\address{Mathematical Institute of Leiden University, Niels Bohrweg 1
2333 CA Leiden, The Netherlands}
\email{f.m.selley@math.leidenuniv.nl}
\thanks{The research of W. Bahsoun is supported by EPSRC grant EP/V053493/1.  C. Liverani acknowledges the MIUR Excellence Department Project awarded to the Department of Mathematics, University of Rome Tor Vergata, CUP E83C18000100006, and the INDAM-GNFM. The paper was partially supported by the Grant PRIN 2017S35EHN. W. Bahsoun would like to thank the hospitality of Leiden University, while C. Liverani and F. S\'elley would like to thank the hospitality of Loughborough University during the course of this work.\\
}
\begin{abstract}
We study infinite systems of globally coupled Anosov diffeomorphisms with weak coupling strength. Using transfer operators acting on anisotropic Banach spaces, we prove that the coupled system admits a \emph{unique physical} invariant state, $h_\eps$. Moreover, we prove \emph{exponential convergence to equilibrium} for a suitable class of distributions and show that the map $\eps\mapsto h_\eps$ is Lipschitz continuous. 
\end{abstract}
\date{\today}
\maketitle
\markboth{W. Bahsoun C. Liverani and F. M. S\'elley}{Globally Coupled Anosov Diffeomorphisms}
\bibliographystyle{plain}

\section{Introduction}
Coupled systems are mathematical models of spatially extended systems consisting of identical interacting units. They provide a challenging subject of study from a mathematical point of view, give a well-motivated example of infinite dimensional dynamical systems, and often exhibit phase transition-like parameter-dependent behavior. Their popularity stems from the fact that they describe considerably well real-world systems (e.g. coupled oscillator networks \cite{Kur84, BA11}, heterogeneous networks \cite{P20} and networks with higher order interactions \cite{Bi21}). 

In the field of dynamical systems coupled maps were introduced by Kaneko \cite{Kan93} and were first studied rigorously by Bunimovich and Sinai \cite{BS88} in the case of \emph{nearest neighbhor} interacting smooth expanding maps. The results of \cite{BS88} were later extended to piecewise expanding maps \cite{KL04, KL09} and to coupled map lattices where the dynamics on each site is given by a smooth Anosov map \cite{PS91}.
 
An important type of coupled systems, which is different from the coupled map lattice model, is the {\em globally coupled} or {\em mean field model}. For example, gas particles interacting via their mean field give rise to the so-called Vlasov equation, \cite{V68, Ga} or see \cite[Chapter 5]{Spo} for a simple derivation. In the case of plasma, a similar procedure gives the Vlasov-Poisson equation. Other examples of mean field models are the vorticity formulation of the two-dimensional Euler equation for incompressible fluids \cite{Euler}, and the time-dependent Hartree equation in quantum mechanics \cite{Hartree} (see \cite{G16} for more details). The study of the corresponding limiting equation is far from obvious as it may exhibit unexpected phenomena, e.g. Landau damping \cite{MV}. 

The above examples consider individual dynamics given by simple integrable motion. In the case in which the individual systems are strongly chaotic, statistical properties of the long-term behavior may be available. This was first shown by Keller \cite{K00} in the case of a toy model of globally coupled expanding maps. In this work, we study, for the first time, statistical properties of \emph{infinite systems of globally coupled} Anosov diffeomorphisms that are motivated by considering an appropriate limit of finitely many coupled Anosov maps.

As already mentioned, the topic of infinitely many globally coupled maps was pioneered by Keller \cite{K00}. In \cite{K00} the local dynamics was described by an expanding circle map or a piecewise expanding map of the interval. For such coupled systems Keller \cite{K00} proved, in the case of weak coupling strength, the existence of a unique invariant state and exponential convergence to equilibrium (see also \cite{Blank} for a similar result). In \cite{BKZ} a globally coupled system with site dynamics given by expanding fractional linear interval maps was studied. It was shown that the system undergoes a supercritical pitchfork bifurcation from a unique stable equilibrium to the coexistence of two stable and one unstable equilibrium. Both \cite{K00} and \cite{BKZ} consider a coupling that only involves a parameter computed from the system state according to some fixed scheme. Later in \cite{BKST18} the work of \cite{K00} was extended to a more general coupling that mimics elastic interaction on the circle \cite{B14}. In addition to the analogous results of \cite{K00}, Lipschitz continuity of the equilibrium state, as a function of the coupling strength parameter, was proved. This was taken one step further in \cite{ST21} where linear response was shown in a rather general, smooth setting. Recent advances on globally coupled maps can be found in \cite{G21}. The work of \cite{G21} includes an abstract framework and applications to study statistical aspects of globally coupled circle maps. However, up to date there are no ergodic theoretic results on globally coupled higher dimensional hyperbolic systems in an infinite limit. This is because the right functional analytic tools to study hyperbolic systems, namely transfer operators acting on appropriate anisotropic Banach spaces, were not available until recently. 

Starting with the paper \cite{BKL}, there has been a growing interest in developing anisotropic Banach spaces and spectral properties of transfer operators associated with hyperbolic dynamical systems. The books \cite{Ba, DKL21} provide an extensive account of the topic. 

Recently, a new family of anisotropic Banach spaces was introduced in \cite{BL21} which are not only amenable to perturbations of the dynamics\footnote{See the earlier work of \cite{GL, GL1, BT, DZ15} where Banach spaces that are amenable to perturbations of the dynamics were also constructed. See also \cite{GaL} for Banach spaces amenable to perturbations, although limited to skew products. Similarly to the present case, the Banach spaces of \cite{GaL} also resemble $L^1$ and $BV$.}, but for which the weak norms `behave' like $L^1$ and the strong norms `behave' like BV, the space of functions of bounded variations (\cite[Remark 2.15]{BL21} for more details). These properties significantly simplify the study of the long-term behavior of the iterates of transfer operators associated with the globally coupled Anosov systems both for finite systems and in the mean field limit. Indeed, these spaces allow to define a \emph{simple} invariant set of distributions under the action of the associated transfer operator (see  \eqref{eq:bk}), where an invariant state of the system is proven to exist. 

Yet, to prove finer statistical properties of globally coupled Anosov maps, we must introduce a higher order version of the spaces in \cite{BL21} (see \eqref{eq:highernorms}). An important feature of this new space is that the transfer operator associated with the globally coupled system admits exponential memory loss with respect to its \emph{weak norm}. Using this information we prove the uniqueness of the invariant state, $h_\eps$, exponential convergence to equilibrium and that the map $\eps\mapsto h_\eps$ is Lipschitz continuous (see Theorem \ref{thm:unifix} below). This information allows proving that such measure is the \emph{unique physical measure} of the system (see Theorem \ref{thm:main0}). 

The paper is organised as follows. In section \ref{sec:mapstat} we introduce our system, state our main results (Theorem \ref{thm:main0} and Theorem \ref{thm:unifix}) and provide a strategy of the proof. In section \ref{sec:proofs} we prove Theorem \ref{thm:unifix} in a series of lemmas and propositions. Theorem \ref{thm:main0} is proved in the same section. Appendix A includes results on adapted foliations and test functions needed for the Banach spaces used in the paper. Appendix B includes statements and proofs about perturbations of Anosov maps. Appendix C includes properties of projections along the unstable direction, which are needed in the proof of the Lasota-Yorke inequality in Lemma \ref{lem:SLY}.

\section{The system and the statement of the main result}\label{sec:mapstat}
\subsection{The individual map.}\label{subsec:map}\ \\ 
 Let $d \geq 2$ and consider a $d$-dimensional compact manifold $M$. Define the differentiable structure by the open cover $\{V_i\}_{i=1}^S$ and charts $\phi_i: V_i \to \RR^d$, $\phi_i \in \C^r$ for some $r \geq 4$. More precisely, consider a fixed smooth partition of unity $\{\vfp_i\}$ subordinated to $\{V_i\}_{i=1}^S$ and define a smooth volume form $\omega$ by
\begin{equation*}
\int_Mh\; d\omega=\sum_{i=1}^S\int_{\phi_i(V_i)}h\circ  \phi_i^{-1}(z)\;\vfp_i\circ \phi_i^{-1}(z) d z.
\end{equation*}
All integrals will be understood with respect to such a form from now. 

Consider an Anosov diffeomorphism $T \in \Diff^r(M)$, $r>1$; i.e., there exists $\lambda_0>1, \nu_0\in (0,1)$, $c_0\in(0, 1)$ and a continuous cone field $\C=\{C(\xi)\}_{\xi\in M}$, $ \overline{C(\xi)}=C(\xi)\subset T_\xi M$ such that $D_{\xi}T^{-1} C(\xi)\subset \text{int}(C(T^{-1}(\xi)))\cup\{0\}$ and
\begin{equation}\label{eq:anosov}
\begin{split}
&\inf_{\xi\in M}\inf_{v\in C(\xi)} \|D_\xi T^{-n}v\|> c_0\nu_0^{-n}\|v\|\\
& \inf_{\xi\in M}\inf_{v\not\in C(\xi)} \|D_\xi T^{n}v\|> c_0\lambda_0^{n}\|v\|.
\end{split}
\end{equation}
We will sometimes refer to it as the stable cone field (and the unstable cone field will be the complement.)
We also assume that $T$ is transitive.
\subsection{Motivation for infinite coupled map systems}\ \\ 
Denote the $N$-fold products $T \times \dots \times T$ and $M\times\cdots\times M$ by $T_N$ and $M_N$, respectively. We view $(T_N,M_N)$ as a system of $N$ units, (called either sites or particles in the coupled maps literature) each with a state in $M$ evolving in time according to $T$. Define a diffeomorphism $\Phi^{\eps}_N$ of the product manifold $M_N$, $\eps$ close to $Id_{M^N}$. We interpret $(T_N\circ \Phi^{\eps}_N, M_N)$ as a coupled system of $N$ interacting units, where $\Phi^{\eps}_N$ accounts for the interaction between individuals, with strength tuned by the parameter $\eps$-- in particular we assume that $\Phi^{0}_N=Id_{M^N}$.

The system state can be described by the vector $(x_1,\dots,x_N)$, or equivalently, by the empirical measure $\frac{1}{N}\sum_{i=1}^N \delta_{x_i}$. More precisely calling $\cM_1(M)$ the set of probability measures over $M$, we can define the natural embedding $\Psi_N: M_N\to \cM_1(M)$ given by $\Psi_N(x)=\frac{1}{N}\sum_{i=1}^N \delta_{x_i}$. Let 
$$F_\ve:M\times \cM_1(M)\to M$$
be $\cC^r$, $r>3$, in the first variable and continuous (with respect to the weak topology) in the second variable uniformly in $x$. We assume that the coupling has the form ({\em mean field coupling})
\[
(\Phi^{\eps}_N(x))_i = F_{\eps}\left(x_i,\Psi_N(x)\right).
\]
For $\mu\in\cM_1(M)$, define 
$$\Phi^{\eps}_\mu =  F_{\eps}(\cdot,\mu).$$ 
Note that, if $\mu_N=\frac{1}{N}\sum_{i=1}^N \delta_{x_i}$, then
\begin{equation}\label{eq:finite_dyn}
(T\circ \Phi^{\eps}_{\mu_N})_*\left(\frac{1}{N}\sum_{i=1}^N \delta_{x_i}\right)=\frac{1}{N}\sum_{i=1}^N \delta_{T\circ \Phi^{\eps}_{\mu_N}(x_i)}
\end{equation}
and, hence, in this case the dynamics on $M_N$ induces a dynamics on $\cM_1(M)$. Such dynamics extends naturally on all $\cM_1(M)$. Thus, in the case in which $\mu$ is a probability measure with a density, the map 
$$(T\circ \Phi^{\eps}_{\mu})_*:\cM_1(M)\to\cM_1(M)$$ 
can be interpreted as the evolution of a state with infinitely many interacting units with state distribution given by $\mu$. Indeed, given a sequence of empirical measures $\mu_N=\frac{1}{N}\sum_{i=1}^N \delta_{x_i}$ converging in the weak topology to some measure $\mu$, as $N \to \infty$, we have by hypothesis
\begin{equation} \label{eq:thmdynlim}
F_\eps\left(\cdot,\frac{1}{N}\sum_{j=1}^N \delta_{x_j}\right) \to F_{\eps} (\cdot,\mu), \qquad N \to \infty,
\end{equation}
where the convergence is in the uniform topology.
Then, recalling \eqref{eq:finite_dyn}, for each $\vf\in\cC^0(M)$ we have
\[
\begin{split}
\lim_{N\to\infty}(T\circ \Phi_{\eps,\mu_N})_*\mu_N(\vf)
&=\lim_{N\to\infty}\frac{1}{N}\sum_{i=1}^N \vf(T\circ \Phi_{\eps,\mu_N}(x_i)).
\end{split}
\]
By \eqref{eq:thmdynlim}, $T\circ \Phi_{\eps,\mu_N}\to T\circ \Phi_{\eps,\mu}$ uniformly, so for each $\delta>0$ there exists an $N_\delta>0$ large enough such that, for all $N\geq N_\delta$ it holds true 
\[
\sup_i |\vf(T\circ \Phi_{\eps,\mu_N}(x_i))-\vf(T\circ \Phi_{\eps,\mu}(x_i))|\leq \delta.
\]
Thus we can write
\[
\begin{split}
\lim_{N\to\infty}(T\circ \Phi^{\eps}_{\mu_N})_*\mu_N(\vf)
&=\lim_{N\to\infty}\frac{1}{N}\sum_{i=1}^N \vf(T\circ \Phi^{\eps}_{\mu}(x_i))=\mu(\vf\circ  T\circ \Phi^{\eps}_{\mu})\\
&=(T\circ \Phi^{\eps}_{\mu})_*\mu(\vf).
\end{split}
\]
This construction provides a motivation for the systems studied in the paper including the examples of subsection \ref{sub:ex}. 

\subsection{Statement of the main results}\ \\
Our aim is to study the long-time behavior of the self-consistent evolution $\mu \mapsto (T\circ \Phi^{\eps}_{\mu})_*\mu$. 
In particular, we are interested in classifying the invariant measures and their stability properties.

Yet, the above setting and question are too general to allow a precise answer. We will thus introduce two further technical assumptions\footnote{These assumption are essentially saying that $\Phi^\varepsilon_h$ is close to the identity, both in $\varepsilon$ and $h$, in an appropriate topology.} \eqref{eq:CouplingAssum1}, \eqref{eq:CouplingAssum2} on the coupling that will be detailed in subsection \ref{sec:coup}.
As for the imprecision of the task, it arises from the possibility of having physically irrelevant invariant measures. For example, measures that represent a finite number of particles or that describe statistical properties we are not interested in. This problem appears already in the study of the invariant measures of an Anosov map and a typical solution is to restrict to physical measures. We, therefore, introduce an analogous definition for physical measures in the present situation of infinitely globally coupled systems.

 \begin{definition}\label{def:phys}
We call a measure $h_\eps\in \cM_1(M)$ invariant if 
\[
(T\circ \Phi^{\eps}_{h_\eps})_* h_\ve=h_\eps.
\]
Moreover, we call an invariant measure $h_\eps$ physical if there exists some $h\in L^1$, such that $\mu_0=hd\omega\in \cM_1(M)$, and, defining for each $n\in\bN\cup\{0\}$, 
\[
\mu_{n+1}=(T\circ \Phi^{\eps}_{\mu_n})_* \mu_n
\]
the sequence $\{\mu_n\}$ converges weakly to $h_\eps$.
\end{definition}
\begin{remark}
In essence, physical measures are measures that the system can asymptotically attain when starting with an initial condition that is absolutely continuous with respect to Lebesgue.
\end{remark}
Our first main result is as follows.
\begin{theorem}\label{thm:main0}
 Under assumptions  \eqref{eq:CouplingAssum1},  \eqref{eq:CouplingAssum2} there exists $\eps_0>0$ such that, for all $\eps<\eps_0$ the system admits a unique physical measure $h_\eps$. 
\end{theorem}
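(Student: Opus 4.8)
The plan is to deduce Theorem~\ref{thm:main0} from the more quantitative Theorem~\ref{thm:unifix} (not yet stated in the excerpt, but announced as giving existence of an invariant state $h_\eps$, exponential convergence to equilibrium for a suitable class of distributions, and Lipschitz dependence $\eps\mapsto h_\eps$). The strategy has two halves: (i) produce \emph{a} physical measure by running the self-consistent dynamics from an arbitrary absolutely continuous initial datum and showing the orbit converges weakly to $h_\eps$; (ii) show there is \emph{only one} physical measure, by showing that every invariant measure reachable as a weak limit of such an orbit must coincide with $h_\eps$.

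**First I would** address existence. Start from $\mu_0 = h\,d\omega \in \cM_1(M)$ with $h\in L^1$. The key point is that after one step $\mu_1 = (T\circ\Phi_{\eps,\mu_0})_*\mu_0$ already lies in the invariant set of distributions on which the transfer-operator machinery operates (the set $\eqref{eq:bk}$ referred to in the introduction, built from the anisotropic Banach spaces of \cite{BL21} and their higher-order refinement $\eqref{eq:highernorms}$): pushing an $L^1$ density forward under a single Anosov map with a smooth $\eps$-perturbation lands in the relevant cone of distributions with controlled weak and strong norms, using the Lasota--Yorke inequality (Lemma~\ref{lem:SLY}). From $n=1$ onwards, the orbit $\{\mu_n\}$ is then governed by the transfer operators $\mathcal L_{\eps,\mu_n}$, and the exponential-memory-loss estimate in the weak norm — the central technical output of Theorem~\ref{thm:unifix} — forces $\|\mu_n - h_\eps\|_{\text{weak}} \to 0$ geometrically. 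Since the weak norm dominates (a constant times) the dual action on $\cC^0$ test functions, this gives $\mu_n \to h_\eps$ weakly, so $h_\eps$ is physical by Definition~\ref{def:phys}. (Existence and invariance of $h_\eps$ itself is part of Theorem~\ref{thm:unifix}; here I only need that it is the limit of an a.c.-started orbit.)

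**Next I would** prove uniqueness. Suppose $h_\eps'$ is any physical measure, witnessed by some a.c. initial datum $\mu_0'$ and orbit $\{\mu_n'\}$ with $\mu_n' \to h_\eps'$ weakly. By the argument above, $\mu_1'$ already lies in the good set of distributions, so the same exponential-convergence estimate gives $\mu_n' \to h_\eps$ weakly as well. Weak limits are unique, hence $h_\eps' = h_\eps$. The only subtlety is making sure the class of distributions for which Theorem~\ref{thm:unifix} guarantees exponential convergence is broad enough to contain $\mu_1$ for \emph{every} $L^1$ density $h$ — i.e.\ that "a suitable class of distributions" includes all one-step images of $L^1 \cap \cM_1(M)$; this should follow from the Lasota--Yorke bound together with the fact that $\|\mathcal L_{\eps,\mu}h\|_{\text{strong}}$ is finite once $h\in L^1$ and one Anosov step has been applied. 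One should also check the self-consistency bookkeeping: the operators along the orbit vary with $\mu_n$, but their dependence on the measure parameter is controlled by the coupling assumptions \eqref{eq:CouplingAssum1}, \eqref{eq:CouplingAssum2}, which is precisely what makes the memory-loss estimate uniform along orbits.

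**The main obstacle** I expect is not the logical skeleton — which is short — but the verification that a single push-forward of a bare $L^1$ density genuinely enters the anisotropic space with quantitative control, and that the exponential-convergence statement of Theorem~\ref{thm:unifix} is formulated for a class wide enough to absorb it; if Theorem~\ref{thm:unifix} is stated only for initial data already in the Banach space (or already in $\eqref{eq:bk}$), one needs the extra smoothing lemma for one Anosov step, and one must confirm that $T\circ\Phi^\eps_{\mu_0}$ is itself a $\mathcal C^r$ Anosov diffeomorphism for $\eps$ small (which follows from structural stability, cf.\ Appendix~B) so that its transfer operator has the required regularization property. Once that bridging step is in place, Theorem~\ref{thm:main0} is an immediate corollary of Theorem~\ref{thm:unifix}.
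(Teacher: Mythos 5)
There is a genuine gap, and it sits exactly where you flagged your ``main obstacle'': the claim that one application of $\L_{T^\eps_{\mu_0}}$ sends a bare $L^1$ density into the strong anisotropic space with controlled $\|\cdot\|_{1,q}$ (or $\|\cdot\|_{2,q}$) norm is false. The transfer operator of a diffeomorphism is $\L_{T}h=(h/|\det DT|)\circ T^{-1}$; it is a composition with a smooth change of variables and does not regularize: the image of a general $L^1$ function is exactly as rough as the input and has no reason to lie in $\B^{1,q}$. The Lasota--Yorke inequalities (Proposition \ref{prop:LY-Seq}, Lemma \ref{lem:SLY}) only \emph{propagate} finiteness of the strong norm, $\|\L^n h\|_{1,q}\le A\theta^n\|h\|_{1,q}+B\|h\|_{0,q+1}$; they cannot create it. Since Theorem \ref{thm:unifix} gives exponential convergence only for initial data in $\overline{\B}_{K_1,K_2}$, your existence and uniqueness arguments both rest on a bridging step that does not exist, and the ``smoothing lemma for one Anosov step'' you invoke is not available in this setting.

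The paper's proof circumvents this with two ideas you do not have. First, given a physical measure $h_\eps$ witnessed by $h\in L^1$, it approximates $h$ by smooth $h_{\delta}$, uses that the sequential transfer operators are $L^1$-contractions to keep the orbits $\L_{T^\eps_{h_n}}\cdots\L_{T^\eps_{h}}h_{\delta_n}$ weakly close to $\mu_n$, and chooses $\delta_n\to0$ so slowly that $C_{\delta_n}\theta^n\to 0$; this produces a sequence $g_n\in\B_{K_1,K_2}\cap\C^r$ converging \emph{weakly} to $h_\eps$. Second --- and this is essential because $\overline{\B}_{K_1,K_2}$ is closed in $\|\cdot\|_{1,q+1}$, so a weak limit of the $g_n$ need not lie in it --- the paper freezes the coupling at $h_\eps$ and uses the spectral gap of the \emph{linear} operator $\L_{T^\eps_{h_\eps}}$ (transitivity of $T^\eps_{h_\eps}$ via structural stability) to show that $h_\eps$ coincides with the unique invariant measure $g_*$ of that linear operator, hence $h_\eps=g_*\in\overline{\B}_{K_1,K_2}$; only then does the uniqueness statement of Theorem \ref{thm:unifix} apply. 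Your skeleton (every a.c.-started orbit converges to the same limit) is the right goal, but without the smooth-approximation argument and the frozen-operator identification it cannot be closed.
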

Theorem \ref{thm:main0} is the consequence of a more quantitative result, Theorem \ref{thm:unifix}.  To prove Theorem \ref{thm:unifix} we need to introduce a more suitable topology. This is done, in analogy with the strategy used for Anosov maps and flows, by introducing Banach spaces adapted to the dynamics.

\subsection{Anisotropic BV}\ \\ 
The following anisotropic Banach spaces, introduced in \cite{BL21}, will play a crucial role in this paper. To define these spaces, we need to consider appropriate foliations of $M$ and test functions suited to such foliations. The spaces $\Omega_{L,q,l}$ collect pairs $(W,\varphi)$ where $W$ is a foliation, $\varphi$ is a test function on $M$ with controlled regularity on $W$, while the labels of the $L,q, l$ are numbers: $L>0$ is a uniform bound on some regularity class of the foliations, $q\in\mathbb N$ is the number of derivatives we consider along the stable direction and $l$ is the dimension of the target Euclidean space of $\varphi$. For a precise definition see \eqref{eq:measure} in the Appendix \ref{sec:foliation} and consult \cite{BL21} for a detailed discussion. Given a function $h\in\C^1(M,\CC)$ we define
\begin{equation}\label{eq:norms}
\begin{split}
&\|h\|_{0,q}:=\sup_{(W,\varphi)\in\Omega_{L,q,1}}\left |\int_M h \, \varphi \right |\\
& \|h\|^*_{1,q}:=\sup_{(W,\varphi)\in\Omega_{L,q+1,d}}\left |\int_M  h \, \text{div}\varphi\right |\\
&\|h\|^-_{1,q}:=a\|h\|_{0,q}+\|h\|^*_{1,q},
\end{split}
\end{equation}
for any $q\in\NN\cup\{0\}$ and some fixed $a>0$. Let $\B^{0,q}$ be the Banach space obtained by completing $\C^1(M,\RR)$ in the $\|\cdot\|_{0,q}$ norm. For each $h\in\B^{0,q}$ let
\begin{equation}\label{eq:norm_bv}
\|h\|_{1,q}=\lim_{\epsilon\to 0}\inf\{\|g\|^-_{1,q} : g\in \C^1(M,\RR)\text{ and }\|g-h\|_{0,q}\leq\epsilon\}.
\end{equation}
We then define $\B^{1,q}:=\{h\in\B^{0,q}\;|\; \|h\|_{1,q}<\infty\}$.

\begin{remark}
According to \cite[Lemma 2.12]{BL21}, there exists a canonical continuous injective map $\iota:\B^{0,q}\to (\C^q)'$.
In the following, we will use $\iota$ to identify a positive element $h\in \B^{0,q}$ with the measure $h d\omega=\iota(h)$ without any further comment. The next lemma further clarifies this.
\end{remark}
\begin{lemma}
A positive element of $\B^{0,q}$ is a measure. In addition, when restricted to $\cM_1(M)\cap\B^{0,q}$, where $\cM_1(M)$ denotes the set of probability measures over $M$, the norm $\|\cdot\|_{0,q}$ is identical to $\|\cdot\|_{TV}$, the total variation norm.
\end{lemma}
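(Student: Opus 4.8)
The plan is to deduce both statements from the canonical injection $\iota:\B^{0,q}\to(\C^q)'$ of the preceding remark together with the Riesz representation theorem. For the first assertion, I would argue that if $h\in\B^{0,q}$ is positive then $\iota(h)$ is a positive linear functional on $\C^q(M)$: writing $h$ as a $\|\cdot\|_{0,q}$-limit of nonnegative $g_n\in\C^1(M,\RR)$ (equivalently, using that $\langle\iota(h),\vf\rangle\ge 0$ whenever $\vf\ge 0$), each $\iota(g_n)=g_n\,d\omega$ is a positive functional and positivity passes to the limit in $(\C^q)'$ by continuity of $\iota$. Since $M$ is compact, $\Id\in\C^q(M)$ and $-\|\vf\|_{\C^0}\Id\le\vf\le\|\vf\|_{\C^0}\Id$ for every $\vf\in\C^q(M)$, so $|\langle\iota(h),\vf\rangle|\le\|\vf\|_{\C^0}\langle\iota(h),\Id\rangle$; as $\C^q(M)$ is dense in $\C^0(M)$, $\iota(h)$ extends uniquely to a bounded positive linear functional on $\C^0(M)$, and the Riesz representation theorem produces a finite positive Borel measure $\mu$ with $\langle\iota(h),\vf\rangle=\int_M\vf\,d\mu$ for all $\vf\in\C^0(M)$, in particular on $\C^q(M)$. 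Injectivity of $\iota$ then justifies identifying $h$ with $\mu$.

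For the second assertion, let $h\in\cM_1(M)\cap\B^{0,q}$; by the first part $h$ is a positive Borel measure of total mass $1$, so $\|h\|_{TV}=h(M)=1$ and it remains to show $\|h\|_{0,q}=1$. The upper bound is immediate: the definition of $\Omega_{L,q,1}$ recalled in Appendix \ref{sec:foliation} normalises the test functions so that $\|\vf\|_{\C^0(M)}\le 1$, hence $\left|\int_M h\,\vf\right|\le\int_M|\vf|\,dh\le h(M)=1$ for every $(W,\vf)\in\Omega_{L,q,1}$, and taking the supremum gives $\|h\|_{0,q}\le 1$. For the reverse inequality I would test against the constant function: $\vf\equiv 1$ is $\C^\infty$ and trivially has the regularity required along the leaves of any adapted foliation $W$, so $(W,\Id)\in\Omega_{L,q,1}$ for an admissible $W$ and $\|h\|_{0,q}\ge\int_M h=1$. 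Should the precise normalisation in \cite{BL21} not admit the constant as a single test function, the same conclusion follows either by writing $1=\sum_i\psi_i$ via a partition of unity into admissible test functions together with the relevant convexity/averaging property of $\Omega_{L,q,1}$, or by approximating $\Id$ from below by admissible $\vf_n\nearrow 1$ and applying monotone convergence, $\int_M\vf_n\,dh\to h(M)=1$. Combining the two bounds gives $\|h\|_{0,q}=1=\|h\|_{TV}$.

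The content here is almost entirely the Riesz representation theorem plus the density of $\C^q$ in $\C^0$, and the sandwiching by $\Id$ that converts positivity into $\C^0$-boundedness; the only genuinely paper-specific point — and the step I expect to require the most care — is the bookkeeping around the space $\Omega_{L,q,1}$, namely checking that its test functions are normalised in $\C^0$ as claimed and that the constant function (or an admissible sequence increasing to it) is indeed admissible. Both should be read off directly from the definitions in Appendix \ref{sec:foliation} and \cite{BL21}.
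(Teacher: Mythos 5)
Your argument is essentially the paper's: the first claim is proved there by exactly the sandwich $h(\|\vf\|_\infty\pm\vf)\ge 0$ followed by the Riesz representation theorem, and the second by the same two-sided comparison ($\|h\|_{0,q}\le\|\mu\|_{TV}$ from the $\C^0$-normalisation of the test functions in $\Omega_{L,q,1}$, and $\|\mu\|_{TV}=\int_M h\le\|h\|_{0,q}$ from testing against the constant). Your explicit worry about whether the constant function is admissible under the weighted $\C^q$ normalisation of \eqref{eq:norm_dual} is well placed — the paper's proof silently uses it too — but this is a shared bookkeeping point, not a gap in your argument relative to theirs.
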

\begin{proof}
The first claim is standard as positive distributions are measures.\footnote{ If $h\in (\C^{s})'$, $s>0$, is positive, then for each $\vf\in \C^\infty$ we have $h(\|\vf\|_\infty\pm\vf)\geq 0$, so $|h (\vf)|\leq h(1)\|\vf\|_\infty$ and the claim follows by the Riesz theorem.}
To conclude, note that, since  $\|\mu\|_{TV}=\sup_{\vf\in\C^0}\int\vf d\mu$, if $d\mu= h d\omega$, we have $\|h\|_{0,q}\leq \|\mu\|_{TV}$. On the other hand
\[
\|\mu\|_{TV}=\int_M h dx\leq \|h\|_{0,q}.
\]
\end{proof}
For further use, we need to define a stronger norm, extending the spaces in \cite{BL21}. For each $h\in\C^2(M,\CC)$, define
\begin{equation}\label{eq:highernorms}
\begin{split}
\|h\|^*_{2,q}&:= \sup_{\underset{ i=1,\dots, d}{(W,\varphi^i)\in\Omega_{L,q+2,d}}} \sum_{i=1}^{d}\int_M  \partial_{x_i}h \, \text{div}\varphi^i
;\\
\|h\|_{2,q}&:= b\|h\|_{1,q}+\|h\|^*_{2,q}.
\end{split}
\end{equation}
Let $\B^{2,q}$ be the Banach space obtained by completing $\C^2(M,\RR)$ in the $\|\cdot\|_{2,q}$ norm. 

\subsection{Assumptions on the coupling.} \label{sec:coup}\ \\ 
We are now able to specify precisely our assumption on the coupling.
Let us define $\B_1^{0,q}=\cM_1(M)\cap\B^{0,q}$.

We assume the coupling satisfies the following two conditions:
\begin{align}
d_{C^{r}}(\Phi_{h_1}^{\eps},\Phi_{h_2}^{\eps})&\leq C |\varepsilon| \|h_1-h_2\|_{0,r-1}, & \text{for all }h_1,h_2 \in \B_1^{0,r-1}, \eps \in \RR \tag{A1}; \label{eq:CouplingAssum1} \\
d_{C^{r}}(\Phi_{h}^{\eps},\Phi_{h}^{\eps'})&\leq C |\eps-\eps'|, & \text{for all } h\in \cM_1(M), \eps, \eps' \in \RR. \tag{A2} 
\label{eq:CouplingAssum2}
\end{align}

As already explained, we define the coupled map as 
\begin{equation}\label{eq:coupledsys}
T_h^{\eps}=T \circ \Phi_{h}^{\varepsilon}
\end{equation}
for $h \in \B_1^{0,q}$ and $\varepsilon \in \RR$.  This map represents the dynamics of a globally coupled map in the so-called thermodynamic limit with site dynamics $T: M \to M$, system state given by the distribution $h$ and coupling strength $\eps$.
\begin{remark}\label{rem:anosov} Note that equation \eqref{eq:CouplingAssum2} implies that, for each $h \in \cM_1(M)$, we have
\[
d_{C^{r}}(T ,T_{h}^{\eps})\leq C|\eps|.
\]
Accordingly, there exists $\ve_0$ such that, for all $\ve\leq \ve_0$ the maps $T_h^\ve$ satisfy \eqref{eq:anosov} uniformly with the same cones (see Lemma \ref{lem:anosovcoup} for details).
\end{remark}

\subsection{Transfer operators}\ \\ 
We can now study the dynamics in the Banach spaces mentioned above. This is done by introducing a transfer operator acting on the an\-iso\-tropic BV spaces. 
Recalling that these spaces can be canonically embedded into $(\C^{q+i})'$ (according to \cite[Lemma 2.12]{BL21}, which can easily be extended to the case $i=2$), the transfer operator associated with $T$
$$\L_T:\mathcal B^{i,q}\to \mathcal B^{i,q}$$ can be defined as
\begin{equation}\label{eq:confused}
(\L_T h)\varphi= h(\varphi\circ T) \qquad \varphi \in \C^{q+i}, \quad i\in\{0,1,2\}.
\end{equation}

\begin{remark}\label{rem:anosv_stable}
Note that the precise version of \eqref{eq:confused} would be
\[
\iota(\L_T h)\varphi= (\iota h)(\varphi\circ T).
\]
As already remarked we allow the above imprecise notation \eqref{eq:confused} to simplify the notation.
In particular, when $h\in \C^1$, we identify $h$ with the measure $d\mu_h= h d\omega$ and the transfer operator associated to $T$ is then given by
\[
\L_T h = \frac{h}{|\det (DT)|} \circ T^{-1}.
\]
Clearly,
\[
d(T_*\mu_h)=(\L_T h) dx.
\]
\end{remark}
It follows that the evolution of the coupled system state is given by the \emph{self-consistent transfer operator} $\L_{\eps}: \B^{i,q}\cap \M_1(M) \to \M_1(M)$ that is defined as
\begin{equation} \label{eq:selfcL}
\L_{\eps}(h)=\L_{T_h^{\eps}}h
\end{equation}
where $\L_{T_h^{\eps}}$ is the transfer operator associated with the map $T_h^{\eps}$ defined in \eqref{eq:coupledsys}. 
Indeed, for $h\in L^1$
\[
\L_{T_h^{\eps}}h dx=(T^\ve_h)_*h.
\]
Notice that 
$$\L_{T_h^{\eps}}=\L_T\circ\L_{\Phi_h^{\eps}}.$$
Observe that unlike $\L_{T_h^{\eps}}$, the self-consistent transfer operator $\L_{\eps}$ is \emph{nonlinear}. 
Setting $h_n=\L_{\eps}^n(h_0)$, if well defined, we can write 
\[
\begin{split}
\L_{\eps}^n(h_0)&=\L_{T_{h_{n-1}}^{\eps}}\dots \L_{T_{h_1}^{\eps}}\L_{T_{h_0}^{\eps}}h_0\\
&= 
\L_{T_{h_{n-1}}^{\eps}\circ\cdots\circ T_{h_1}^{\eps}\circ T_{h_0}^{\eps}}h_0.
\end{split}
\]
Note that if $\L_\eps(h_\eps)=h_\eps$, then the coupled system admits an invariant state.
\subsection{A more quantitative result}\ \\
Our goal is to prove that the self-consistent transfer operator $\L_\eps$ admits a unique fixed point and exhibits exponential convergence to equilibrium for a certain class of distributions. To do so we first define a compact convex subset of the $\B^{0,q+1}$ space. For $K \geq 0$, define
\begin{equation} \label{eq:bk}
\B(K,q)=\left \{h \in \C^{1} \: : \:  h\ge 0,\; \int h=1,\; \| h\|_{1,q} \leq K \right\}
\end{equation}
and let $\overline{\B}(K,q)$ be the closure of $\B(K,q)$ with respect to the $\|\cdot\|_{0,q+1}$-norm.
The proof of the following proposition, which shows that $\L_\eps$ can be iterated and that $B(K,q)$ is eventually invariant, can be found in Section \ref{sec:Leps is well defined}.
\begin{proposition}\label{lem:invariance}
There exists $\ve^*_1>0$ such that, for all $\eps<\eps^*_1$ and $q>0$, 
\[
\L_\eps ( \B^{0,q}\cap \M_1(M)) \subset   \B^{0,q}\cap\M_1(M).
\]
There exists $N \in \NN$, and $K_{\min} \ge 0$ such that, for all all $|\eps| < \eps^*_1$, $n\geq N$ and  $K \geq K_{\min}$, $\L_{\eps}^n (\overline{\B}(K,q)) \subset \overline{\B}(K,q)$.
\end{proposition}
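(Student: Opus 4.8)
The plan is to reduce the statement to two inputs: that the transfer operator of each \emph{fixed} map $T_h^{\eps}$ is a bounded, positivity- and mass-preserving operator on the anisotropic spaces, and a uniform strong-norm Lasota–Yorke inequality for the non-autonomous compositions produced by iterating $\L_\eps$.

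For the first inclusion I would fix $\eps^*_1\le\eps_0$ small enough that Remark~\ref{rem:anosov} (equivalently Lemma~\ref{lem:anosovcoup}) applies, so that for $|\eps|<\eps^*_1$ and \emph{every} $h\in\M_1(M)$ the map $T_h^{\eps}=T\circ\Phi_h^{\eps}$ is Anosov with the fixed cone field; by the construction of \cite{BL21}, $\L_{T_h^{\eps}}$ is then a bounded operator on each $\B^{0,q}$, uniformly in $h$ and in $\eps<\eps^*_1$. For $h\in\B^{0,q}\cap\M_1(M)$ one sets $\L_\eps(h)=\L_{T_h^{\eps}}h\in\B^{0,q}$, so it remains only to check this is again a probability measure: positivity of $\L_{T_h^{\eps}}$ on the closed positive cone follows from the explicit formula $\L_{T_h^{\eps}}g=\bigl(g/|\det DT_h^{\eps}|\bigr)\circ (T_h^{\eps})^{-1}$ of Remark~\ref{rem:anosv_stable} on the dense set $\C^1(M,\RR)$, while $(\L_{T_h^{\eps}}h)(1)=h(1\circ T_h^{\eps})=h(1)=1$ gives mass preservation. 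Since the hyperbolicity of $T_h^{\eps}$ is independent of $q$, the same $\eps^*_1$ works for all $q>0$.

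For the second claim I would first treat $h_0\in\B_K$. By the first part the iterates $h_j=\L_\eps^{\,j}(h_0)$ are well defined, lie in $\M_1(M)$, and are in fact $\C^1$ (a $\C^1$ density pushed forward by a $\C^r$ diffeomorphism, $r\ge 2$, stays $\C^1$), so that $\L_\eps^{\,n}(h_0)=\L_{\,T_{h_{n-1}}^{\eps}\circ\cdots\circ T_{h_0}^{\eps}}\,h_0$ with every factor Anosov with the fixed cones. The Lasota–Yorke inequality of Lemma~\ref{lem:SLY}, iterated along this composition, then yields constants $C\ge 1$ and $\theta\in(0,1)$, independent of $\eps<\eps^*_1$, of the sequence $(h_j)$ and of $n$, with
\[
\|\L_\eps^{\,n}(h_0)\|_{1,q}\ \le\ C\,\theta^{\,n}\,\|h_0\|_{1,q}+C\,\|h_0\|_{0,q}\ =\ C\,\theta^{\,n}\,\|h_0\|_{1,q}+C ,
\]
using $\|h_0\|_{0,q}=\|h_0\|_{TV}=1$ for $h_0\in\M_1(M)$. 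Choosing $N$ with $C\theta^{\,N}\le\tfrac12$ and $K_{\min}=2C$, we get, for $K\ge K_{\min}$, $n\ge N$, $h_0\in\B_K$, that $\|\L_\eps^{\,n}(h_0)\|_{1,q}\le\tfrac12K+\tfrac12K_{\min}\le K$; since $\L_\eps^{\,n}(h_0)$ is a nonnegative $\C^1$ probability density, this gives $\L_\eps^{\,n}(\B_K)\subset\B_K$. To reach $\overline{\B}_K$ I would show that $\L_\eps$, hence $\L_\eps^{\,n}$, is continuous on $\overline{\B}_K$ for $\|\cdot\|_{0,q+1}$: writing $\L_\eps(h_1)-\L_\eps(h_2)=\L_{T_{h_1}^{\eps}}(h_1-h_2)+\bigl(\L_{T_{h_1}^{\eps}}-\L_{T_{h_2}^{\eps}}\bigr)h_2$, the first term is handled by the uniform boundedness of $\L_{T_{h_1}^{\eps}}$ on $\B^{0,q+1}$, and the second by the perturbation estimates for Anosov transfer operators (Appendix~B), which bound it by a constant times $d_{C^r}(T_{h_1}^{\eps},T_{h_2}^{\eps})\,\|h_2\|_{1,q}$, small by (A1) and the uniform bound $\|h_2\|_{1,q}\le K$ on $\overline{\B}_K$. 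Given $h\in\overline{\B}_K$, picking $h_k\in\B_K$ with $h_k\to h$ in $\|\cdot\|_{0,q+1}$ then yields $\L_\eps^{\,n}(h_k)\in\B_K$ and $\L_\eps^{\,n}(h_k)\to\L_\eps^{\,n}(h)$, so $\L_\eps^{\,n}(h)\in\overline{\B}_K$.

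The genuine difficulty is packaged into Lemma~\ref{lem:SLY}: a strong-norm Lasota–Yorke inequality uniform over $\eps<\eps^*_1$ \emph{and} over the non-autonomous compositions $T_{h_{n-1}}^{\eps}\circ\cdots\circ T_{h_0}^{\eps}$, which is where the uniform cones of Remark~\ref{rem:anosov}, the anisotropic structure and the projection estimates of Appendix~C enter, and which also dictates how small $\eps^*_1$ must be. Within the present proof the only delicate point is the closure step: the continuity of $\L_\eps$ it requires must be established in the $\|\cdot\|_{0,q+1}$ topology in which $\overline{\B}_K$ is defined, so the perturbation estimate and (A1) have to be combined with the right amount of loss in test-function regularity — precisely the type of bound the spaces of \cite{BL21} are built to provide.
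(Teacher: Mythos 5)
Your proposal is correct and follows essentially the same route as the paper: positivity and mass preservation plus the uniform weak bound give the first inclusion, and the uniform sequential Lasota--Yorke inequality together with $\|h\|_{0,q+1}\leq\|h\|_{TV}=1$ gives $\|\L_\eps^{\,n}h\|_{1,q}\leq A\theta^nK+B\leq K$ for $n\geq N$ and $K\geq K_{\min}=B/(1-\beta)$. The one slip is a mislabelled citation --- the inequality you display and need is \eqref{eq:LY2Th} of Proposition \ref{prop:LY-Seq} (the $\|\cdot\|_{1,q}$ estimate for non-autonomous compositions), not Lemma \ref{lem:SLY} (which concerns the $\|\cdot\|_{2,q}$ norm of a single map) --- while your explicit continuity argument for passing to $\overline{\B}_K$ is a welcome elaboration of a step the paper handles only implicitly here (and spells out later in Proposition \ref{prop:uniqueN}).
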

 The above is not enough to obtain uniqueness and exponential convergence. This is because, on the weak space $\B^{0,q}$, $\L_T$ does not admit a spectral gap. To overcome this hurdle we define a stronger Banach space than $\B^{1,q}$ and a set finer than $\overline{\B}(K,q)$ to obtain our desired result.  

Let $K_1 \geq K_{\min}$, $K_2 \geq 0$ and define
\begin{equation} \label{eq:bk2}
\B(K_1,K_2,q)=\left \{h \in \C^{2} \: : \:  h\in\B(K_1,q+1) ,\; \| h\|_{2,q} \leq K_2 \right\}.
\end{equation}
Let $\overline{\B}(K_1,K_2,q)$ be the closure of $\B(K_1,K_2,q)$ with respect to the $\|\cdot\|_{1,q+1}$-norm.
\begin{theorem}\label{thm:unifix}
There exists $\eps^*_2 > 0$, $K_*(\eps^*_2) \geq 1$,  such that for all $q \in \{1,\dots,r-3\}$, $|\eps| < \eps^*_2 $ and $K_2\ge K_*(\eps)$, the following holds:
\begin{itemize}
\item[(i)] $\L_\eps$ has a unique fixed point $h_\eps$ in $\overline{\B}(K_1,K_2,q)$. In addition, there exists $\gamma\in (0,1)$ and $C>0$ such that, for all $h\in \overline{\B}(K_1,K_2,q)$, we have
\[
\|\L_\eps^{n}(h)-h_\eps\|_{1,q+1}\leq C \gamma^n .
\]
\end{itemize}
Moreover, for all $|\eps|, |\eps'| < \eps^*_2 $,
\begin{itemize}
\item[(ii)] if $h_\eps$ and $h_{\eps'}$ are  the unique fixed points of $\L_{\eps}$ and $\L_{\eps'}$ respectively, then there exists $C > 0$ (depending on $\eps^*_2$ and $K_*$) such that $$\|h_\eps-h_{\eps'}\|_{1,q+1}\le C|\eps-\eps'|.$$
\end{itemize}
\end{theorem}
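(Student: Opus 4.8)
The plan is to prove Theorem~\ref{thm:unifix} by exploiting a uniform Lasota--Yorke inequality for the family $\{\L_{T_h^\eps}\}$ on the scale $\B^{0,q}\hookleftarrow\B^{1,q}\hookleftarrow\B^{2,q}$ together with the compact embeddings $\B^{2,q}\hookrightarrow\B^{1,q}\hookrightarrow\B^{0,q}$ (and their shifts in $q$), so that on the invariant set $\overline{\B}_{K_1,K_2}$ the nonlinear map $\L_\eps$ becomes a genuine contraction in a weak norm. First I would record the key linear ingredients: (a) the uniform (in $h\in\B_{K_1}$ and $|\eps|<\eps_1^*$) Lasota--Yorke estimates $\|\L_{T_h^\eps}^n g\|_{1,q}\le C\theta^n\|g\|_{1,q}+C\|g\|_{0,q}$ and the analogous one at the $\B^{2,q}$ level (this is Lemma~\ref{lem:SLY}, using Appendix~C for the unstable projections); and (b) the exponential memory loss of $\L_T$ \emph{on the quotient}: since the leading eigenvalue is simple with eigenvector the SRB measure of $T$, for zero-average data one has $\|\L_T^n g\|_{0,q}\le C\mu_T(\text{stuff})$-type decay; more precisely, I would establish that for $g$ with $\int g=0$, $\|\L_{T_h^\eps}^n g\|_{0,q}\le C\sigma^n\|g\|_{1,q}$ uniformly in $h$ and small $\eps$ (standard from Hennion/Doeblin--Fortet plus simplicity of the peripheral spectrum, which holds uniformly by continuity of the spectral data under the $C^r$-perturbation controlled by \eqref{eq:CouplingAssum2}).

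Next, for the nonlinear contraction: given $h,h'\in\overline{\B}_{K_1,K_2}$ write
\[
\L_\eps^n(h)-\L_\eps^n(h')=\L_{T_{h_{n-1}}^\eps\circ\cdots\circ T_{h_0}^\eps}h_0-\L_{T_{h'_{n-1}}^\eps\circ\cdots\circ T_{h'_0}^\eps}h'_0,
\]
and telescope: the difference splits into (i) a "linear" term $\L_{T_{h}^\eps}^n(h_0-h'_0)$-type contribution, which has zero average and hence decays like $\sigma^n\|h_0-h'_0\|_{1,q}$ by step (b), and (ii) terms measuring the difference of the cocycles $T_{h_j}^\eps$ versus $T_{h'_j}^\eps$. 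For (ii) I would use \eqref{eq:CouplingAssum1}: $d_{C^r}(\Phi_{h_j}^\eps,\Phi_{h'_j}^\eps)\le C|\eps|\,\|h_j-h'_j\|_{0,q}$, combined with a perturbation estimate (Appendix~B) of the form $\|\L_{T_g^\eps}f-\L_{T_{g'}^\eps}f\|_{0,q}\le C\,d_{C^r}(T_g^\eps,T_{g'}^\eps)\,\|f\|_{1,q}$, and the a priori bound $\|h_j\|_{1,q}\le K_1$ from invariance of $\overline{\B}_{K_1,K_2}$ (Proposition~\ref{lem:invariance}). This yields a Gronwall-type recursion $\|h_n-h'_n\|_{0,q}\le C\sigma^n\|h_0-h'_0\|_{1,q}+C|\eps|\sum_{j<n}\sigma^{n-1-j}\|h_j-h'_j\|_{0,q}$; choosing $\eps$ small makes $C|\eps|$ dominated by the gap, so one gets $\|h_n-h'_n\|_{0,q}\le C\gamma^n$ for some $\gamma<1$ — but one must upgrade from $\|\cdot\|_{0,q}$ to $\|\cdot\|_{1,q}$, which is exactly where the higher space $\B^{2,q}$ enters: interpolate via the Lasota--Yorke inequality at the $\B^{1,q}/\B^{2,q}$ level ($\|h_n-h'_n\|_{1,q}\lesssim \theta^{n/2}K_2+\|h_{n/2}-h'_{n/2}\|_{0,q}$, using the uniform bound $\|h_j\|_{2,q}\le K_2$ on $\overline{\B}_{K_1,K_2}$). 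Existence and uniqueness of $h_\eps$ then follow: uniqueness from the contraction, existence because $\{\L_\eps^n(h)\}$ is Cauchy in $\|\cdot\|_{1,q}$ and $\overline{\B}_{K_1,K_2}$ is closed in the weaker $\|\cdot\|_{1,q+1}$ topology (with a compactness argument to land the limit back in $\overline{\B}_{K_1,K_2}$, so that $\L_\eps$ can be continuously applied to it). Part (i) is complete once we note $\L_\eps^n(h)\to h_\eps$ in $\|\cdot\|_{1,q}$ with rate $\gamma^n$ uniformly over $\overline{\B}_{K_1,K_2}$.

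For part (ii), the Lipschitz dependence on $\eps$, I would compare the two fixed-point equations $h_\eps=\L_\eps(h_\eps)$ and $h_{\eps'}=\L_{\eps'}(h_{\eps'})$. Write
\[
h_\eps-h_{\eps'}=\big(\L_\eps(h_\eps)-\L_\eps(h_{\eps'})\big)+\big(\L_\eps(h_{\eps'})-\L_{\eps'}(h_{\eps'})\big);
\]
iterating $n$ times, the first bracket is handled by the contraction from part (i) (it contributes a factor $\gamma^n\|h_\eps-h_{\eps'}\|_{1,q}$), while the second is a genuinely "external" perturbation controlled by \eqref{eq:CouplingAssum2}: $d_{C^r}(\Phi_{h_{\eps'}}^\eps,\Phi_{h_{\eps'}}^{\eps'})\le C|\eps-\eps'|$, so by the same Appendix~B perturbation estimate $\|\L_\eps(h_{\eps'})-\L_{\eps'}(h_{\eps'})\|_{0,q}\le C|\eps-\eps'|\,\|h_{\eps'}\|_{1,q}\le CK_1|\eps-\eps'|$, propagated forward by the uniform Lasota--Yorke/decay estimates. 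Summing the geometric series in $n$ and letting $n\to\infty$ gives $\|h_\eps-h_{\eps'}\|_{1,q}\le \frac{C}{1-\gamma}|\eps-\eps'|$, as claimed (again with the $\B^{2,q}$-interpolation step to control the $\|\cdot\|_{1,q}$-norm rather than just $\|\cdot\|_{0,q}$).

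The main obstacle, I expect, is the uniformity and self-improvement at the top of the scale: one must show that the \emph{nonlinear} iteration keeps the iterates in $\overline{\B}_{K_1,K_2}$ with $K_2$ chosen uniformly for all small $\eps$ (Proposition~\ref{lem:invariance} gives $K_1$ at the $\B^{1,q}$ level; the $\B^{2,q}$ bound is more delicate because the Lasota--Yorke constant at that level involves third derivatives of $T_h^\eps$, hence $C^r$ with $r>3$, and the coupling term $\|\L_{\Phi_h^\eps}\|_{2,q}$ must be controlled using \eqref{eq:CouplingAssum1}), and that the perturbative "difference of cocycles" estimates genuinely close at the $\B^{1,q}$ level rather than only $\B^{0,q}$ — this is precisely why the paper introduces $\B^{2,q}$, and making the interpolation uniform in $\eps$, $h$ while tracking the correct dependence of $K_*(\eps)$ is the technical heart of the argument. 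A secondary subtlety is verifying that the peripheral spectrum of $\L_{T_h^\eps}$ on $\B^{1,q}$ is uniformly simple (no approximate eigenvalues on the unit circle other than $1$) for all small $\eps$ and all $h\in\overline{\B}_{K_1,K_2}$; this uses transitivity of $T$, Remark~\ref{rem:anosov}, and a standard perturbation argument, but must be stated carefully since $h\mapsto T_h^\eps$ ranges over an infinite-dimensional family.
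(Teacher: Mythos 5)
Your overall architecture --- uniform Lasota--Yorke estimates up to the $\B^{2,q}$ level, memory loss for zero-average data, a telescoping/Gronwall recursion for the nonlinear contraction, and for (ii) a comparison of the two fixed-point equations via \eqref{eq:CouplingAssum2} --- is the same as the paper's. But two steps, as written, would fail. First, your decay ingredient (b) is stated for powers $\L_{T_h^\eps}^n$ of a \emph{single} operator and justified by uniform simplicity of the peripheral spectrum of each $\L_{T_h^\eps}$. The nonlinear iteration actually produces products $\L_{T_{h_{n-1}}^\eps}\cdots\L_{T_{h_0}^\eps}$ of \emph{different} operators along the orbit, and uniform spectral gaps of the individual factors do not imply decay of such products. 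The paper's Lemma \ref{lem:Contraction} obtains this by comparing the product with $\L_T^m$ (one fixed operator with a gap) through the $O(|\eps|)$ perturbation bound of Corollary \ref{cor:statstab}, choosing first $n$, then $m$, then $\eps$; some argument of this kind is indispensable and is absent from your sketch.

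Second, the plan to close the recursion in $\|\cdot\|_{0,q}$ and then ``upgrade'' to $\|\cdot\|_{1,q}$ by Lasota--Yorke interpolation does not work. Splitting $\L_\eps^n h-\L_\eps^n h'$ into a fixed sequential composition applied to $h_{n/2}-h'_{n/2}$ plus a cocycle-difference term, the latter can only be controlled in $\|\cdot\|_{1,q+1}$ via Lemma \ref{lem:SDifference} and \eqref{eq:StrongStatStabIter}, and that bound is in terms of $\|h_j-h'_j\|_{1,q+1}$, not $\|h_j-h'_j\|_{0,q}$. With only the weak-norm contraction in hand, the best a priori bound is $\|h_j-h'_j\|_{1,q+1}\le 2K_1$, which leaves an $O(|\eps|K_1K_2)$ remainder that does not decay in $n$; so the upgrade yields neither the $\|\cdot\|_{1,q}$-contraction needed for uniqueness nor the stated rate. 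The induction must be run directly at the $\|\cdot\|_{1,q}$ level, with hypothesis $\|\L_\eps^k h-\L_\eps^k h'\|_{1,q}\le C\gamma^k\|h-h'\|_{1,q}$ fed back into \eqref{eq:StrongStatStabIter} using $\|h_j\|_{2,q}\le K_2$; this is exactly Lemma \ref{lem:Sinvariance}, and it is the real reason $\B^{2,q}$ is introduced --- a point you correctly identify as ``the technical heart'' but do not actually implement. A minor further gap: one must also check that the fixed point of $\L_\eps^N$ is fixed by $\L_\eps$ itself (the paper does this by observing that $\L_\eps h_\eps$ is another fixed point of $\L_\eps^N$ in a slightly larger invariant set and invoking uniqueness there); your appeal to continuity at the limit needs the same care.
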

The proof of the above Theorem is postponed to section \ref{sec:unique}.
\begin{remark}
Item (ii) provides statistical stability of the coupled system. In particular, it implies that the map $\eps\mapsto h_\eps$, $|\eps|\in[0,\eps_2]$ is Lipschitz continuous. 
\end{remark}

The strategy to prove Theorem \ref{thm:unifix} is as follows: we first show that $\L_{\eps}^N|_{\overline{\B}(K,q)}$ is continuous in the weak norm $\|\cdot\|_{0,q+1}$ to conclude that $\L^N_\eps$ has at least one fixed point in $\overline{\B}(K,q)$. We then prove that $\L^N_\eps$  is a contraction when acting on $\overline{\B}(K_1,K_2,q)$, which gives the uniqueness and exponential convergence. We then prove that this fixed point is actually a unique fixed point of $\L_\eps$ itself. Finally, we prove Lipschitz continuity of $\eps \mapsto h_\eps$ by using the exponential convergence result and that $\eps \mapsto \L_\eps$ is Lipschitz in a proper sense. 

Note that the above strategy is natural when the transfer operator associated with the site dynamics admits a spectral gap on a Banach space. See \cite{G21} for a general strategy similar to the one we implement in this work.

\begin{remark} \label{rem:galatolo1}
If one wants to follow \cite{G21} in the Anosov setting, one has to choose the regularity in our spaces carefully. It seems to us that such a choice may then require more regularity on the map. Moreover, \cite{G21} assumes a `one step' Lasota-Yorke inequality (see assumption (Con1) in \cite{G21} required to obtain the exponential convergence to equilibrium) which seems a strong assumption in a hyperbolic setting (the constant $A$ that appears in our Lasota-Yorke inequality, does not only depend on the map, but also on the class of the foliations considered in our norms). Therefore, instead of verifying the assumptions of \cite{G21} which will force us to adding restrictive assumptions on $T$, we are going to pursue a different line of argument.  
\end{remark}
 
\subsection{Examples}\label{sub:ex}\ \\ 
Before proving Theorem \ref{thm:main0}, Proposition \ref{lem:invariance}, and Theorem \ref{thm:unifix}, we provide a class of examples that satisfy assumptions \eqref{eq:CouplingAssum1}-\eqref{eq:CouplingAssum2}. In both examples we consider $M=\TT^d$ and $\mu \in \cM_1(\TT^d)$.
\begin{example}

 Consider $\Phi_{\mu}^{\eps}$ given by the following formula:
\[
\Phi^{\eps}_{\mu}(x) = x + \eps \int_{\TT^d} \KK_1(x)\KK_2(y)d\mu(y)
\]
for some $\KK_1,\KK_2 \in C^{\infty}(\TT^{d},\TT^d)$.

Write this as
\[
\Phi_{\mu}^{\eps}(x)=x+\eps \gamma(x, \mu(\KK_2)),
\]
where
\[
\mu(\KK_2)=\int_{\TT^d}\KK_2(x)d\mu(x),
\]
$\gamma(x,\mu(\KK_2)))=\KK_1(x) \cdot \mu (\KK_2)$. Assume that $\int d\mu(x)=\int h(x)dx$ for some $h\in C^r(\TT^d)$. This is analogous to the one dimensional setting of \cite{K00}.

Since
\[
\partial^{\alpha}(\Phi^{\eps}_{h})_j=
\begin{cases}
1+\eps \cdot \partial^{\alpha}\KK_1 \cdot h(\KK_2) & \quad \text{if } \partial^{\alpha}=\partial_{x_j} \\
\eps \cdot \partial^{\alpha}\KK_1 \cdot h(\KK_2) & \quad \text{otherwise},
\end{cases}
\]
we have
\[
d_{C^{r}}(\Phi_{h}^{\eps},\Phi_{h}^{\eps'})\leq \|\KK_1\|_{C^r} |\varepsilon-\eps'| \left|\int_{\TT^d} \KK_2(x)h(x)dx\right| \leq  C(\KK_1,\KK_2)|\eps-\eps'| \|h\|_{0,r}
\]
and 
\[
d_{C^{r}}(\Phi_{h}^{\eps},\Phi_{h'}^{\eps})\leq  \|\KK_1\|_{C^r} |\varepsilon| \left|\int_{\TT^d} \KK_2(x)(h-h')(x)dx\right| \leq C(\KK_1,\KK_2)|\varepsilon| \|h-h'\|_{0,r}.
\]
\end{example}
\begin{example}
Now consider
\[
\Phi^{\eps}_{\mu}(x) = x + \eps \int_{\TT^d} \KK(x,y)h(y)dy
\]
for some $\KK \in C^{\infty}(\TT^{2d},\TT^d)$ (e.g. $\KK(x,y)=\kappa(x-y)$ for diffusive coupling.) Then
\[
\partial^{\alpha}(\Phi^{\eps}_{h})_j=
\begin{cases}
1+\eps \int_{\TT^d} \partial^{\alpha} \KK(x,y)h(y)dy  & \quad \text{if } \partial^{\alpha}=\partial_{x_j} \\
\eps \int_{\TT^d} \partial^{\alpha} \KK(x,y)h(y)dy & \quad \text{otherwise}.
\end{cases}
\]
Assumptions \eqref{eq:CouplingAssum1}-\eqref{eq:CouplingAssum2} are checked similarly:
\[
d_{C^{r}}(\Phi_{h}^{\eps},\Phi_{h}^{\eps'})\leq |\varepsilon-\eps'| \left|\int_{\TT^d} \partial^{\alpha} \KK(x,y)h(y)dy\right| \leq  C(\KK)|\eps-\eps'| \|h\|_{TV}.
\]
and 
\[
d_{C^{r}}(\Phi_{h}^{\eps},\Phi_{h'}^{\eps})\leq  |\varepsilon| \left|\int_{\TT^d} \partial^{\alpha} \KK(x,y)(h-h')(y)dy\right| \leq C(\KK)|\varepsilon| \|h-h'\|_{0,r}.
\]
\end{example}
\section{Proofs}\label{sec:proofs}
\subsection{ Transfer operators for sequential Anosov maps}\ \\
The following Lasota-Yorke inequalities hold for the transfer operator $\L_T$:
\begin{proposition} \label{prop:LY-T} \cite[Proposition 3.2, Lemma 4.1]{BL21}
For each $\theta\in (\max\{\nu_0,\lambda_0^{-1}\},1)$, there exist constants $A, B>0$  such that, for all $h\in \B^{0,q}$, $q\in \{0,\dots,r- 1\}$, holds true
\[
\|\L^n_{ T} h\|_{0,q}\leq A \|h\|_{0,q}.
\]
In addition, for $h\in \B^{1,q}$ and all $q\in \{1,\dots,r- 2\}$, holds true
\[
\begin{split}
&\|\L^n_{T} h\|_{0,q}\leq A \theta^n\|h\|_{0,q}+B\|h\|_{0,q+1};\\
&\|\L^n_{T} h\|_{1,q}\leq A\theta^n\|h\|_{1,q}+B \|h\|_{0,q+1}.
\end{split}
\]
Moreover, $\{h\in\B^{1,q}\;:\;\|h\|_{1,q}\leq 1\}$ is relatively compact in the topology associated to the norm $\|h\|_{0,q+1}$.
\end{proposition}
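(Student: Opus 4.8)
Since Proposition~\ref{prop:LY-T} is quoted from \cite{BL21}, my plan is to reconstruct the anisotropic Lasota--Yorke scheme in our notation and point out where the hyperbolicity of $T$ is used. Everything starts from the duality identities built into \eqref{eq:norms}: for $(W,\varphi)\in\Omega_{L,q,1}$,
\[
\int_M (\L_T^n h)\,\varphi=\int_M h\,(\varphi\circ T^n),
\]
and for $(W,\varphi)\in\Omega_{L,q+1,d}$ one has $\int_M (\L_T^n h)\,\text{div}\varphi=\int_M h\,(\text{div}\varphi)\circ T^n$. Thus the task is to rewrite $\varphi\circ T^n$ and $(\text{div}\varphi)\circ T^n$ as bounded combinations of admissible test objects, with the correct sizes. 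For the first (boundedness) inequality the geometric input is that the cone condition $D_\xi T^{-1}C(\xi)\subset\text{int}(C(T^{-1}(\xi)))\cup\{0\}$ forces the leaves of an admissible foliation $W$, pulled back by $T^{-n}$, to stay uniformly transverse to the complementary cone with uniformly bounded curvature and distortion; together with a partition of unity subordinate to a fixed finite chart cover, this lets one write $\varphi\circ T^n$ as a bounded combination of test functions in $\Omega_{L',q,1}$ for a fixed $L'$ (the content of the appendix on adapted foliations). Since the weak norm behaves like $L^1$ --- no expanding Jacobian enters the change of variables along the relevant direction --- this yields $\|\L_T^n h\|_{0,q}\le A\|h\|_{0,q}$ with $A$ independent of $n$; I would prove the analogous bound at index $q+1$ at the same time, since it feeds the error terms below.

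\textbf{Contraction inequalities.} For the strong estimate I would dualise the $\|\cdot\|^*_{1,q}$-seminorm and, in a chart, use the chain rule to write
\[
(\text{div}\varphi)\circ T^n=\text{div}\bigl((DT^n)^{-1}(\varphi\circ T^n)\bigr)+\mathcal E_n,
\]
where $\mathcal E_n$ collects the terms in which a derivative hits $DT^n$ (or the curvature of the pulled-back leaves) rather than $\varphi\circ T^n$. In the leading term, the pulled-back vector field $(DT^n)^{-1}(\varphi\circ T^n)$, once renormalised by the unstable Jacobian so as to be an admissible test divergence, has size $O(\theta^n)$: this is exactly \eqref{eq:anosov}, controlling $\|D_\xi T^{-n}\|$ on the unstable cone and the contraction of $T^{-n}$ on the complementary one. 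Hence this term contributes $A\theta^n\|h\|^*_{1,q}$, while $\mathcal E_n$ pairs $h$ against a test function that has consumed one extra derivative of $\varphi$ and is therefore bounded by $B\|h\|_{0,q+1}$. Adding $a$ times the weak bound gives $\|\L_T^n h\|_{1,q}\le A\theta^n\|h\|_{1,q}+B\|h\|_{0,q+1}$. For the intermediate inequality (still for $h\in\B^{1,q}$) I would run the same idea on the scalar pairing $\int_M h\,(\varphi\circ T^n)$: the component of $\varphi\circ T^n$ not already damped by $\theta^n$ is written, after an integration by parts along the pulled-back leaves, as a divergence whose pairing with $h$ is finite precisely because $\|h\|_{1,q}<\infty$ and which again carries the factor $\theta^n$, the curvature and boundary corrections being absorbed into $B\|h\|_{0,q+1}$; for the detailed bookkeeping (including the regime $q=0$, where only the boundedness bound is claimed) I would follow \cite{BL21}.

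\textbf{Relative compactness.} Finally, $\{\|h\|_{1,q}\le 1\}$ is relatively compact for $\|\cdot\|_{0,q+1}$ by a Rellich--Kondrachov-type argument, independent of the dynamics: $\|\cdot\|^*_{1,q}$ is a BV-type quantity (a bound on $\int h\,\text{div}\varphi$), and passing from index $q$ to $q+1$ amounts to testing against a more regular, hence essentially precompact, family of test functions. Concretely, one approximates the unit ball of $\Omega_{L,q+1,1}$ by finitely many test functions, uniformly in the $\Omega_{L,q,1}$-sense, so that the pairings with $h$ are determined up to $\varepsilon$ by finitely many uniformly bounded numbers, and a diagonal extraction finishes --- this is the analogue of the compact embedding $BV\hookrightarrow L^1$.

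\textbf{Main obstacle.} The real work is hidden in the phrase ``uniformly transverse, with uniformly bounded curvature and distortion'': one must show that the family $\Omega_{L,\cdot,\cdot}$ is stable under pullback by $T^{-1}$ up to absorbing the parameter $L$, and that \emph{all} the resulting constants ($L'$, the distortion bounds, the curvature of $T^{-n}W$, and hence $A$ and $B$) are uniform in $n$. It is this, rather than the algebra above, that makes the constants in Proposition~\ref{prop:LY-T} independent of $n$, and it is the part that rests squarely on the adapted-foliation estimates of \cite{BL21} and the appendices.
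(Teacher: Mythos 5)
The paper itself offers no proof of Proposition \ref{prop:LY-T}: it is imported verbatim from \cite[Proposition 3.2, Lemma 4.1]{BL21}, so your reconstruction can only be measured against the strategy of that reference, which the present paper replays one derivative higher in its own Lemma \ref{lem:SLY}. At the level of architecture — duality, invariance of the admissible foliation class under pullback with constants uniform in $n$, a $\theta^n$ gain on the leading term of the dualised divergence, lower-order terms absorbed into $B\|h\|_{0,q+1}$, and an Arzel\`a--Ascoli-type compactness of the test-function classes — your sketch is the right one, and your closing remark correctly identifies the uniformity of the foliation estimates as the real labour.

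There is, however, one step that as written would fail, and it is the crux of the anisotropic construction. You claim that the leading term $\operatorname{div}\bigl((DT^n)^{-1}(\varphi\circ T^n)\bigr)$ is, after renormalisation by the unstable Jacobian, of size $O(\theta^n)$. This is false for a general admissible $\varphi$: by the first line of \eqref{eq:anosov}, $(DT^n)^{-1}$ contracts only the components of $\varphi$ lying in the unstable cone, while on the stable components it \emph{expands} at rate $\nu_0^{-n}$; no scalar renormalisation fixes this, and the bad part is not among the product-rule terms you collect in $\mathcal E_n$. The correct argument first splits $\varphi=\pi^u\varphi+\pi^s\varphi$ using the projectors of Appendix \ref{sec:projection}, adapted to $W$ and to the image foliation $T^n\Gamma$: the unstable part obeys the contraction estimate \eqref{eq:piu} and produces $A\theta^n\|h\|_{1,q}$, whereas the stable part is not contracted at all — it is integrated by parts along the pulled-back leaves, where the resulting test object has gained one degree of leaf regularity and therefore pairs with $h$ through the weaker norm $\|h\|_{0,q+1}$, as in \eqref{eq:pis}. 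This is precisely the decomposition \eqref{eq:fourterms} that the paper carries out for the $\B^{2,q}$ analogue. Without the $\pi^u/\pi^s$ splitting your ``leading term'' is not small and the Lasota--Yorke inequality does not follow; the remaining parts of your sketch (weak-norm boundedness from cone invariance together with the $L^1$-like behaviour of $\|\cdot\|_{0,q}$, and the compact embedding) are consistent with \cite{BL21}.
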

Since we assumed that $T$ is transitive, the above proposition implies that $\L_T$ admits a spectral gap when acting on $\B^{1,q}$, see \cite{BL21} for a detailed discussion.

\begin{remark}\label{rem:seq}
Choose $\eps^* > 0 $ sufficiently small, and let $h_0,\dots,h_{n-1} \in \cM_1(M)$. Each concatenation $T_{h_{n-1}}^{\eps}\circ\cdots\circ T_{h_0}^{\eps}$  is a composition of Anosov diffeomorphisms with properties that can be uniformly controlled for any sequence $h_i$, $i=0,\dots,n-1$, $n \in \mathbb{N}$ and any $|\eps| < \eps^*$ (for an argument see Lemma \ref{lem:anosovcoup} in the Appendix). We will use this information to obtain uniform Lasota-Yorke inequalities for all $\L_{T_{h_{n-1}}^{\eps}\circ\cdots\circ T_{h_0}^{\eps}}$ provided $|\eps|$ is sufficiently small. 
\end{remark}
\begin{proposition} \label{prop:LY-Seq}
There exists $\eps_1 > 0$ such that for each $|\varepsilon| < \varepsilon_1$, $\theta\in (\max\{\nu,\lambda^{-1}\},1)$ and constants $ A,  B>0$ such that for all $h$, $g_0,\dots,g_{n-1}  \in \B_1^{0,q}$, $n \in \NN$, $q\in \{0,\dots,r- 1\}$, holds true
\begin{equation} \label{eq:LY1Th}
\|\L_{T_{g_{n-1}}^{\varepsilon} \circ \cdots \circ {T_{g_0}^{\varepsilon}}} h\|_{0,q}\leq  A\|h\|_{0,q}.
\end{equation}
In addition, for all $h$, $g_0,\dots,g_{n-1}  \in \B^{1,q} \cap \B_1^{0,q}$ and all $q\in \{1,\dots,r- 2\}$, holds true
\begin{equation} \label{eq:LY2Th}
\begin{split}
&\|\L_{T_{g_{n-1}}^{\varepsilon} \circ \cdots \circ {T_{g_0}^{\varepsilon}}} h\|_{0,q}\leq  A  \theta^n\|h\|_{0,q}+ B\|h\|_{0,q+1};\\
&\|\L_{T_{g_{n-1}}^{\varepsilon} \circ \cdots \circ {T_{g_0}^{\varepsilon}}} h\|_{1,q}\leq  A \theta^n\|h\|_{1,q}+ B\|h\|_{0,q+1}.
\end{split}
\end{equation}
\end{proposition}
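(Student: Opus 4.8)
The plan is to deduce Proposition~\ref{prop:LY-Seq} from Proposition~\ref{prop:LY-T} by noticing that the proof of the latter in \cite{BL21} uses only uniform structural features of the underlying map --- a common adapted cone field, uniform $C^r$ bounds on the map and its inverse, and uniform bounded distortion along unstable leaves --- all of which are shared, for $|\eps|$ small, by every concatenation $G_n:=T_{g_{n-1}}^{\eps}\circ\cdots\circ T_{g_0}^{\eps}$, regardless of the sequence $g_0,\dots,g_{n-1}\in\B_1^{0,q}$. Indeed, the estimates in \cite[Prop.~3.2, Lem.~4.1]{BL21} are carried out leafwise: one pulls an admissible pair $(W,\varphi)$ back by the map, decomposes the integral via a partition of unity adapted to the hyperbolic splitting, and bounds each piece through the stable contraction rate, the unstable expansion rate, the $C^r$ size of the dynamics, and the admissibility at controlled scale of images and preimages of leaves. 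None of this is special to the iterates of a single diffeomorphism.

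Accordingly, the first step is to fix $\eps_1>0$ and collect the uniform data. By Remark~\ref{rem:anosov} and Lemma~\ref{lem:anosovcoup}, for all $|\eps|<\eps_1$ and $g\in\B_1^{0,q}$ the map $T_g^{\eps}$ satisfies \eqref{eq:anosov} with the same cone field $\C$ and hyperbolicity constants $\lambda>1$, $\nu<1$ (mildly degraded from $\lambda_0,\nu_0$); and by \eqref{eq:CouplingAssum1}--\eqref{eq:CouplingAssum2} together with $d_{C^r}(T,T_g^{\eps})\le C|\eps|$ the $C^r$ norms of $T_g^{\eps}$ and $(T_g^{\eps})^{-1}$, as well as the distortion along unstable leaves, are bounded uniformly in $g$ and $|\eps|<\eps_1$. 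Since the cone field is common, $G_n$ is again Anosov for these cones, with unstable expansion $\ge c_0\lambda^{n}$ and stable contraction $\le c_0^{-1}\nu^{n}$, and its higher-order $C^r$ data enters the strong-norm bound only through the customary lower-order term against $\|h\|_{0,q+1}$. The adapted-foliation results of Appendix~\ref{sec:foliation}, applied factor by factor, give that $G_n$ and $G_n^{-1}$ send admissible leaves to admissible leaves at a controlled scale.

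The second step is to re-run the argument of \cite[Prop.~3.2, Lem.~4.1]{BL21} with $G_n$ in place of $T^n$, substituting the uniform constants of the first step. The weak bound \eqref{eq:LY1Th} follows with $A$ independent of $n$ because $\L_{G_n}$ is, up to bounded distortion, an $L^1$-type contraction on measures. The strong bounds \eqref{eq:LY2Th} come out with the contraction factor $\theta^n$ for any $\theta\in(\max\{\nu,\lambda^{-1}\},1)$, since each of the $n$ factors contributes at most $\max\{\nu,\lambda^{-1}\}$ to the leading term (the stable direction contracts, the divergence absorbs the expanding Jacobian) while the cross terms sum to a geometric series absorbed into $B\|h\|_{0,q+1}$ --- the very mechanism already present in the single-map proof, with the $n$-th power now realized by a genuine composition. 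The one point requiring care, and the main obstacle, is to verify that the proof in \cite{BL21} is genuinely ``sequential-ready'': that every constant there can be traded for the corresponding uniform constant of our family and that no step secretly relies on $T^n=T\circ T^{n-1}$ with $T$ fixed (for instance in an inductive control of scales or in the construction of the adapted foliation). We expect this to be routine, since the spaces of \cite{BL21} were designed precisely to be stable under perturbation of the dynamics and the estimates are local and leafwise; a careful rewriting keeping $(g_i)$ generic throughout yields the claim with $A,B$ depending only on $\eps_1$ and the uniform Anosov data.
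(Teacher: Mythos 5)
Your proposal is correct and follows essentially the same route as the paper: the authors likewise invoke Lemma~\ref{lem:anosovcoup} (via Remark~\ref{re:allanosov}) to get that every concatenation $T_{g_{n-1}}^{\eps}\circ\cdots\circ T_{g_0}^{\eps}$ satisfies the uniform Anosov conditions \eqref{eq:anosov} with the same cones and constants, and then observe that the proof of \cite[Proposition 3.2]{BL21} goes through verbatim with $T^n$ replaced by the concatenation. Your write-up simply spells out in more detail why the single-map argument is ``sequential-ready,'' which the paper leaves implicit.
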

\begin{proof}
By Remark \ref{re:allanosov}, with small changes in notation\footnote{Basically by replacing $T^n$ in the proofs of \cite{BL21} by $T_{g_{n-1}}^{\varepsilon} \circ \cdots \circ {T_{g_0}^{\varepsilon}}$.} the proof follows verbatim as that of [\cite{BL21} Proposition 3.2].
\end{proof}

The following statement is an essential perturbation lemma relating the action of the transfer operators to the distance of the associated coupled Anosov maps.
\begin{lemma} \label{lem:lip} 	 
Let $h \in \B^{1,q} \cap \B_1^{0,q}$, $q \in \{1,\dots,r-2\}$, $g, g_1, g_2  \in \cM_1(M)$, $|\eps|, |\eps'| < \eps_2$ for some $\eps_2 \leq \eps_1$. Then
	\begin{align}
	\|(\L_{T_{g_1}^{\eps}}-\L_{T_{g_2}^{\eps}})h\|_{0,q+1} &\leq  C \|h\|_{1,q}d_{\C^{r}}(T_{g_1}^{\eps},T_{g_2}^{\eps}); \label{eq:lip1w} \\
\|(\L_{T_{g}^{\eps}}-\L_{T_{g}^{\eps'}})h\|_{0,q+1} &\leq  C \|h\|_{1,q}d_{\C^{r}}(T_{g}^{\eps},T_{g}^{\eps'}).\label{eq:lip2w}
	\end{align}
\end{lemma}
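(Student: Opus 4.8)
The plan is to estimate the action of the difference of two transfer operators on the weak space $\B^{0,q+1}$ by unfolding the definition of the transfer operator against a test pair $(W,\varphi)\in\Omega_{L,q+1,1}$ and passing the difference $T_{g_1}^{\eps}-T_{g_2}^{\eps}$ onto the test function. Concretely, for $h\in\C^1$ one writes, using Remark \ref{rem:anosv_stable},
\[
\bigl((\L_{T_{g_1}^{\eps}}-\L_{T_{g_2}^{\eps}})h\bigr)(\varphi)
=\int_M h\,\bigl(\varphi\circ T_{g_1}^{\eps}-\varphi\circ T_{g_2}^{\eps}\bigr)\,d\omega .
\]
The standard device (as in \cite{BL21} and its predecessors) is to interpolate: write $\varphi\circ T_{g_1}^{\eps}-\varphi\circ T_{g_2}^{\eps}=\int_0^1 \frac{d}{dt}\,\varphi\circ T_t\,dt$ along a $\C^r$-path $T_t$ joining $T_{g_2}^{\eps}$ to $T_{g_1}^{\eps}$ with $d_{\C^r}(T_0,T_1)$ controlling $\|\dot T_t\|_{\C^{r-1}}$; then $\frac{d}{dt}\varphi\circ T_t=(D\varphi\circ T_t)\dot T_t$, which is a divergence-type expression. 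After an integration by parts (moving one derivative off $\varphi$ and onto $h$, which is where the $\|h\|_{1,q}$ rather than $\|h\|_{0,q}$ enters), one is left with an integral of $\partial h$ paired against a test object that, after the change of variables by $T_t$, is of the form (test function on a foliation)$\,\times\,d_{\C^r}(T_{g_1}^{\eps},T_{g_2}^{\eps})$, with the foliation being the $T_t$-pullback of $W$. The key structural input is that the family $\{T_{g}^{\eps}\}$ is uniformly Anosov with common cones for $|\eps|<\eps_1$ (Remark \ref{rem:anosov}, Lemma \ref{lem:anosovcoup}), so that pullbacks of admissible foliations in $\Omega_{L,q+1,1}$ remain admissible in $\Omega_{L,q+1,d}$ up to uniform constants, and the resulting pairing is bounded by $\|h\|_{1,q}$ (this is essentially the content of the norm $\|\cdot\|^*_{1,q}$ in \eqref{eq:norms}). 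Taking the supremum over $(W,\varphi)$ gives \eqref{eq:lip1w}; \eqref{eq:lip2w} is identical with $g_1=g_2=g$ and $\eps,\eps'$ in place of $\eps$. Finally one extends from $h\in\C^1$ to all $h\in\B^{1,q}\cap\B_1^{0,q}$ by density, using that both sides are continuous in the relevant norms.

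The key steps, in order, are: (1) reduce to $h\in\C^1$ and fix a test pair $(W,\varphi)\in\Omega_{L,q+1,1}$; (2) build the $\C^r$ interpolating path $T_t$ between the two maps with $\|\partial_t T_t\|_{\C^{r-1}}\lesssim d_{\C^r}(T_{g_1}^{\eps},T_{g_2}^{\eps})$ — here one uses that the space of $\C^r$ diffeomorphisms $\eps$-close to $T$ is convex in charts, or simply interpolates the coupling maps $\Phi^{\eps}_{g_i}$ linearly and composes with $T$; (3) differentiate $\varphi\circ T_t$ and recognize the derivative as a divergence of a vector field whose $\C^{q}$-size on $W$ (after the change of variables) is controlled; (4) integrate by parts once to transfer a derivative onto $h$, producing a pairing of $\partial h$ against an element of $\Omega_{L,q+1,d}$ scaled by the $\C^r$ distance; (5) bound this by $\|h\|^*_{1,q}\le\|h\|^-_{1,q}$ and hence, passing to the infimum in \eqref{eq:norm_bv}, by $C\|h\|_{1,q}d_{\C^r}(\cdot,\cdot)$; (6) take the supremum over test pairs and remove the smoothness assumption on $h$ by density.

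The main obstacle I expect is step (3)–(4): controlling that after pulling back the admissible foliation $W\in\Omega_{L,q+1,1}$ by the (only $\C^{r}$, not smooth) diffeomorphisms $T_t$, one lands in the right test-function class $\Omega_{L,q+2,d}$ (or $\Omega_{L,q+1,d}$ as stated) with constants uniform in $t$, $g_i$, and $|\eps|<\eps_1$ — this is precisely why the regularity budget $r>3$ and the loss of one index ($q\mapsto q+1$, and the appearance of the \emph{strong} norm $\|h\|_{1,q}$ on a \emph{weak} estimate) are needed, and it is the place where one must invoke the appendix material on adapted foliations and the uniform hyperbolicity of the coupled maps. The bookkeeping of how many derivatives land where, and checking that the chain rule does not cost more than the available regularity, is the delicate part; the rest is the now-standard perturbation argument for anisotropic norms, following \cite[Sec.\ 3]{BL21} essentially verbatim with $T^n$ replaced by the single perturbed map.
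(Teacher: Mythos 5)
Your proposal matches the paper's argument essentially step for step: chart-wise linear interpolation $T_t=tT_{g_1}^{\eps}+(1-t)T_{g_2}^{\eps}$ (whose uniform hyperbolicity is exactly Lemma \ref{lem:interanosovcoup}), the fundamental theorem of calculus turning the difference into $\int_0^1\langle\nabla\vf\circ T_t,T_{g_2}^{\eps}-T_{g_1}^{\eps}\rangle\,dt$, the identity splitting this into a divergence term bounded by $\|h\|^*_{1,q}$ plus a $\tr D(T_{g_2}^{\eps}-T_{g_1}^{\eps})$ term bounded by $\|h\|_{0,q+1}$, control of the test objects on the pulled-back foliation $T_t^{-1}W$ via the composition lemmas, and density of $\C^1$. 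The proof is correct and takes the same route as the paper.
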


\begin{proof}
Assume $|\eps|$ is small enough, and the domains $V_i$ of the charts are small enough such that there is an open set $U_j$  for any $i$ containing both $T_{g_1}^{\eps} V_i$ and $T_{g_2}^{\eps} V_i$, on which we can define functions $\psi_j \in \C^r$, mapping  $U_j$ to $\RR^d$. Then we can define $T_t = tT_{g_1}^{\eps}+(1-t)T_{g_2}^{\eps}$, $t \in [0,1]$, in the chart $\{U_j, \psi_j\}$. The computation below should be understood in these charts.

According to Lemma \ref{lem:interanosovcoup}, $T_t$ is an Anosov diffeomorphism such that the stable and unstable cones can be chosen uniformly not only in $g_i$ and $\eps$ but also in $t$. We are going to write $\vf_t = \vf \circ T_t$ for a test function $\vf \in \C^q(M,\CC^{\ell})$ and $\FF_t = T_t^{-1}\FF$ for a foliation $\FF$. Notice that the framework of \cite{BL21} applies to this foliation, so in particular for $T_t^{-1}W \in \FF_t$ it holds that $T_t^{-1}W \in {\mathcal{F}}^r_\C$.
Let $h \in \C^1(M,\CC)$ and $\vf \in \C^{q+1}(M,\CC)$. 
\begin{align*}
&\int_M (\L_{T_{g_1}^{\eps}}-\L_{T_{g_2}^{\eps}})h \vf \:d \omega =  \int_M h(\vf \circ T_{g_1}^{\eps} - \vf \circ T_{g_2}^{\eps}) d\omega \\
&= \int_M h \int_0^1\langle \nabla \vf \circ T_t ,(T_{g_2}^{\eps}-T_{g_1}^{\eps})\rangle dt d\omega \\
&= \int_0^1 \left(\int_M  h \divv (\vf \circ T_t (T_{g_2}^{\eps}-T_{g_1}^{\eps})) d\omega  - \int_M  h \vf \circ T_t \tr D(T_{g_2}^{\eps}-T_{g_1}^{\eps})) d\omega\right) dt \\
&\leq \|h\|_{1,q}\int_0^1 \|\vf \circ T_t (T_{g_2}^{\eps}-T_{g_1}^{\eps})\|_{q+1}^{T_t^{-1}W}dt \\
&+ \|h\|_{0,q+1}\int_0^1 \|\vf \circ T_t \tr D(T_{g_2}^{\eps}-T_{g_1}^{\eps})\|_{q+1}^{T_t^{-1}W}dt \\
&:= (I) + (II).
\end{align*}
First consider $\|\vf \circ T_t (T_{g_2}^{\eps}-T_{g_1}^{\eps})\|_{q+1}^{T_t^{-1}W}$ and for $\xi\in M$ write $(\FF_t)_\xi(x,y):=((F_t)_\xi(x,y),y)$ which describes the local foliation\footnote{See subsection \ref{sec:foliation} in the appendix for more details about foliations.}.
\begin{align*}
&\|\vf \circ T_t (T_{g_2}^{\eps}-T_{g_1}^{\eps})\|_{q+1}^{T_t^{-1}W} \\
&=\sup_{\substack{\xi \in M \\ x \in U^0_u}}\sum_{j=1}^d \|[\{\vf \circ T_t (T_{g_2}^{\eps}-T_{g_1}^{\eps})\}\circ (\FF_t)_\xi(x,\cdot)]_j\|_{\C^{q+1}(U_s^0,\CC)} \\
&\leq \sup_{\substack{\xi \in M \\ x \in U^0_u}}\|\vf \circ T_t \circ (\FF_t)_\xi(x,\cdot)\|_{\C^{q+1}(U_s^0,\CC)}\cdot \\
&\quad \times \sup_{\substack{\xi \in M \\ x \in U^0_u}}\sum_{j=1}^d \|[ (T_{g_2}^{\eps}-T_{g_1}^{\eps})\circ (\FF_t)_\xi(x,\cdot)]_j\|_{\C^{q+1}(U_s^0,\CC)} \\
&= \|\vf \circ T_t\|_{q+1}^{T_t^{-1}W} \sup_{\substack{\xi \in M \\ x \in U^0_u}}\sum_{j=1}^d \|[ (T_{g_2}^{\eps}-T_{g_1}^{\eps})\circ (\FF_t)_\xi(x,\cdot)]_j\|_{\C^{q+1}(U_s^0,\CC)} \\
&\leq A_0 \|\vf\|_{q+1}^W \sup_{\substack{\xi \in M \\ x \in U^0_u}}\sum_{j=1}^d \|[ (T_{g_2}^{\eps}-T_{g_1}^{\eps})\circ (\FF_t)_\xi(x,\cdot)]_j\|_{\C^{q+1}(U_s^0,\CC)}
\end{align*}
using \cite[Lemma 2.15]{BL21}. By Lemma \ref{lem:prop-norm} we obtain
\begin{align*}
&\|[ (T_{g_2}^{\eps}-T_{g_1}^{\eps})\circ (\FF_t)_\xi(x,\cdot)]_j\|_{\C^{q+1}(U_s^0,\CC)} \\
&\leq \|[ T_{g_2}^{\eps}-T_{g_1}^{\eps}]_j\|_{\C^{q+1}(U_s^0,\CC)}\sum_{i=0}^{q+1}\binom{q+1}{i}\param^{q+1-i}\|D_y(\FF_t)^T_\xi(x,\cdot)\|^i_{\C^{q+1}(U_s^0,\CC)} \\
&\leq const(r,q,L,\param) \cdot d_{\C^{q+1}}(T_{g_1}^{\eps},T_{g_2}^{\eps})
\end{align*}
This gives
\[
(I) \leq A_0 \|\vf\|_{q+1}^W \cdot const'(r,q,L,\param,d) \cdot d_{\C^{q+1}}(T_{g_1}^{\eps},T_{g_2}^{\eps})\|h\|_{1,q}.
\]
We can do a similar calculation for (II) and obtain 
\begin{align*}
&\|\vf \circ T_t \tr D(T_{g_2}^{\eps}-T_{g_1}^{\eps})\|_{q+1}^{T_t^{-1}W} \\
&\leq A_0 \|\vf\|_{q+1}^W \sup_{\substack{\xi \in M \\ x \in U^0_u}}\sum_{j=1}^d \|[\tr D (T_{g_2}^{\eps}-T_{g_1}^{\eps})\circ (\FF_t)_\xi(x,\cdot)]_j\|_{\C^{q+1}(U_s^0,\CC)} \\
&\leq A_0 \|\vf\|_{q+1}^W\cdot const'(r,q,L,\param,d) \|[ \tr D(T_{g_2}^{\eps}-T_{g_1}^{\eps})]_j\|_{\C^{q+1}(U_s^0,\CC)}
\end{align*}
so 
\[
(II) \leq A_0 { \|\vf\|_{q+1}^W}\cdot const'(r,q,L,\param,d) \cdot d_{\C^{q+2}}(T_{g_1}^{\eps},T_{g_2}^{\eps})\|h\|_{0,q+1}.
\]
We obtained
\[
\left| \int_M (\L_{T_{g_1}^{\eps}}-\L_{T_{g_2}^{\eps}})h \vf \:d \omega \right| \leq C\|\vf\|_{q+1}^W\|h\|_{1,q}d_{\C^{q+2}}(T_{g_1}^{\eps},T_{g_2}^{\eps}),
\]
and the fact that $\C^1(M,\CC)$ is dense in $\B^{0,q+1}$ concludes the proof \eqref{eq:lip1w}. 

For \eqref{eq:lip2w}, an analogous argument works. We can prove an analogue of Lemma \ref{lem:interanosovcoup} for $T_t=tT_g^{\eps}+(1-t)T_g^{\eps'}$ (provided that $|\eps|, |\eps'| < \eps^*$ of Lemma \ref{lem:anosovcoup}) and repeat the above argument for $T_g^{\eps}, T_g^{\eps'}$ instead of $T_{g_1}^{\eps},T_{g_2}^{\eps}$.

\end{proof}
Using the assumptions on the coupling, we obtain the following corollary:
\begin{corollary} \label{cor:statstab}
By Lemma \ref{lem:lip} and Assumption \eqref{eq:CouplingAssum1} it follows
\begin{equation} \label{eq:pert1}
	\|(\L_{T_{g_1}^{\eps}}-\L_{T_{g_2}^{\eps}})h\|_{0,q+1} \leq  C|\varepsilon| \|g_1-g_2\|_{0,q+1} \|h\|_{1,q}.
\end{equation}
and by assumption \eqref{eq:CouplingAssum2},
\begin{equation} \label{eq:pert2}
	 \|(\L_{T_{g}^{\eps}}-\L_{T_{g}^{\eps'}})h\|_{0,q+1} \leq  C|\varepsilon-\eps'| \|g\|_{TV} \|h\|_{1,q}.
\end{equation}
The above implies
\begin{align*}
&\|(\L_{T_{g_n}^{\eps}}\cdots \L_{T_{g_1}^{\eps}}-\L_{T_{f_n}^{\eps}}\cdots \L_{T_{f_1}^{\eps}})h\|_{0,q+1} \\
& \leq \sum_{i=1}^{n}\|\L_{T_{g_n}^{\eps}}\dots \L_{T_{g_{i+1}}^{\eps}}(\L_{T_{g_i}^{\eps}}- \L_{T_{f_i}^{\eps}})\L_{T_{f_{i-1}}^{\eps}}\dots\L_{T_{f_1}^{\eps}}h\|_{0,q+1} \\
& \leq C(n)|\eps|\max_i  \|g_i-f_i\|_{0,q+1} \|h\|_{1,q},
\end{align*}
and in particular for $g_i,f_i \in \overline{\B}(K,q)$,
\begin{equation} \label{eq:iterss}
\|(\L_{T_{g_n}^{\eps}}\cdots \L_{T_{g_1}^{\eps}}-\L_{T_{f_n}^{\eps}}\cdots \L_{T_{f_1}^{\eps}})h\|_{0,q+1} \leq C(n,K)|\eps|\max_i\|g_i-f_i\|_{0,q+1}
\|h\|_{1,q},
\end{equation}
and by a similar argument
\begin{equation} \label{eq:iterss2}
\|(\L_{T_{g_n}^{\eps}}\cdots \L_{T_{g_1}^{\eps}}-\L_{T_{g_n}^{\eps'}}\cdots \L_{T_{g_1}^{\eps'}})h\|_{0,q+1} \leq C(n,K)|\eps-\eps'|\max_i\|g_i\|_{TV}
\|h\|_{1,q}.
\end{equation}
\end{corollary}

Next, we prove memory loss, which will also be needed for the proof of exponential convergence to equilibrium in the next subsection. \begin{lemma} \label{lem:Contraction} 
	Let $K \geq K_{\min}$. There exists $\eps_3 > 0$, $C > 0$ such that for all $|\eps| < \eps_3$, $q \in \{1,\dots,r-2\}$
		\[
	 \|\L_{T_{g_{n}}^\eps}\dots \L_{T_{g_1}^\eps}h\|_{1,q}\leq C \theta^n\|h\|_{1,q}  \quad n \in \NN
	\]
	holds true for all $h\in\B^{1,q}$, $h(1)=0$ and all $g_1,\dots,g_{n} \in \cM_1(M)$.
\end{lemma}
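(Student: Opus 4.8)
The plan is to obtain exponential contraction of the weak norm for sequential compositions acting on the zero-average subspace by combining the uniform Lasota--Yorke inequalities of Proposition \ref{prop:LY-Seq} with the spectral-gap-type information that comes from transitivity of $T$. The key point is that on the closed subspace $\{h \in \B^{1,q} : h(1)=0\}$ the single operator $\L_T$ has spectral radius $<1$ in the strong norm $\|\cdot\|_{1,q}$, and the sequential operators $\L_{T_{g_{n}}^\eps}\cdots\L_{T_{g_1}^\eps}$ are small perturbations of $\L_T^n$ in the sense made quantitative by Corollary \ref{cor:statstab}. So the strategy is a telescoping/interpolation argument: first establish the estimate for $\L_T$ alone, then transfer it to the perturbed sequential product at the cost of a term that is $O(|\eps|)$ times a lower (weak) norm, and finally feed that back into the Lasota--Yorke inequality to close the induction.

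\textbf{Step 1: uniform contraction for the unperturbed operator.} Since $T$ is transitive, Proposition \ref{prop:LY-T} together with standard Hennion--Ionescu-Tulcea--Marinescu theory gives that $\L_T$ acting on $\B^{1,q}$ has a simple leading eigenvalue $1$ (with eigenvector the SRB measure) and the rest of the spectrum inside a disk of radius $\sigma<1$. Restricting to $\V_q := \{h \in \B^{1,q} : h(1)=0\}$, which is $\L_T$-invariant and closed, we get constants $C_0>0$, $\theta_0 \in (\sigma,1)$ with $\|\L_T^n h\|_{1,q} \le C_0 \theta_0^n \|h\|_{1,q}$ for all $h \in \V_q$. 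Fix now $m$ large enough that $C_0 \theta_0^m \le \tfrac14$ and also $A\theta^m \le \tfrac14$ (with $A,\theta$ from Proposition \ref{prop:LY-Seq}); this $m$ will be the block length.

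\textbf{Step 2: perturbative comparison over one block.} For $g_1,\dots,g_m \in \cM_1(M)$ and $h \in \V_q$ with $\|h\|_{1,q}\le 1$, write
\[
\L_{T_{g_m}^\eps}\cdots\L_{T_{g_1}^\eps}h = \L_T^m h + \bigl(\L_{T_{g_m}^\eps}\cdots\L_{T_{g_1}^\eps}-\L_T^m\bigr)h.
\]
Applying Corollary \ref{cor:statstab} with $f_i = h_{T}$ (the point-mass-free comparison, i.e.\ replacing each $T_{g_i}^\eps$ by $T = T_{g_i}^0$) and Remark \ref{rem:anosov}, the difference is bounded in $\|\cdot\|_{0,q+1}$ by $C(m)|\eps|\|h\|_{1,q}$. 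Note $\L_{T_{g_m}^\eps}\cdots\L_{T_{g_1}^\eps}h$ still has zero average, since each $\L_{T_g^\eps}$ preserves total mass. Now apply the full Lasota--Yorke inequality \eqref{eq:LY2Th} of Proposition \ref{prop:LY-Seq} to the $m$-fold product to control $\|\cdot\|_{1,q}$:
\[
\|\L_{T_{g_m}^\eps}\cdots\L_{T_{g_1}^\eps}h\|_{1,q} \le A\theta^m\|h\|_{1,q} + B\|\L_{T_{g_m}^\eps}\cdots\L_{T_{g_1}^\eps}h\|_{0,q+1},
\]
wait --- this needs the weak norm of the \emph{output}; instead I will combine the two: estimate $\|\L_{T_{g_m}^\eps}\cdots\L_{T_{g_1}^\eps}h\|_{1,q}$ by $\|\L_T^m h\|_{1,q}$ plus the perturbation. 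The perturbation is only small in $\|\cdot\|_{0,q+1}$, so I re-apply \eqref{eq:LY2Th} to one extra half-block: writing the $2m$-product as a $2m$-fold composition and using $\|\cdot\|_{0,q}\le\|\cdot\|_{1,q}$, the Lasota--Yorke inequality after $2m$ steps reads $\le A\theta^{2m}\|h\|_{1,q} + B\|(\text{$m$-fold product})h\|_{0,q+1}$, and the last term, because the $m$-fold product only moved $h$ by $O(|\eps|)$ in $\|\cdot\|_{0,q+1}$ away from $\L_T^m h$ while $\|\L_T^m h\|_{0,q+1}\le \|\L_T^m h\|_{1,q}\le \tfrac14\|h\|_{1,q}$, is $\le (\tfrac14 B + BC(m)|\eps|)\|h\|_{1,q}$. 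Choosing first $m$, then $|\eps|<\eps_3$ small, the block map satisfies $\|\L_{T_{g_{2m}}^\eps}\cdots\L_{T_{g_1}^\eps}h\|_{1,q}\le \tfrac12\|h\|_{1,q}$ for all $h\in\V_q$.

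\textbf{Step 3: iterate blocks and interpolate.} By \eqref{eq:LY1Th} (or \eqref{eq:LY2Th} with the uniform bound $A$ on $\|\cdot\|_{1,q}$), each $\L_{T_g^\eps}$ is bounded on $\B^{1,q}$ uniformly in $g$ and $\eps$; call the bound $\bar A$. Write $n = 2mk + s$ with $0\le s<2m$. Grouping the first $2mk$ factors into $k$ consecutive blocks of length $2m$ --- each of which is of exactly the form handled in Step 2 since $\V_q$ is preserved at every stage --- gives $\|\L_{T_{g_{2mk}}^\eps}\cdots\L_{T_{g_1}^\eps}h\|_{1,q}\le 2^{-k}\|h\|_{1,q}$, and then the remaining $s\le 2m$ factors cost at most $\bar A^{2m}$. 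Hence $\|\L_{T_{g_n}^\eps}\cdots\L_{T_{g_1}^\eps}h\|_{1,q}\le \bar A^{2m} 2^{-(n-s)/2m}\|h\|_{1,q} \le C\theta^n\|h\|_{1,q}$ with $\theta = 2^{-1/2m}\in(0,1)$ and $C = \bar A^{2m} 2$. Replacing $\theta$ by $\max\{\theta, \max\{\nu,\lambda^{-1}\}^{1/2}\}$ if necessary to match the statement's constant, this proves the lemma. The constant $\eps_3$ is the one produced in Step 2; it can be taken $\le \eps_1$, and one also takes $K\ge K_{\min}$ merely so that the hypotheses of Proposition \ref{prop:LY-Seq} and the uniform-hyperbolicity Lemma \ref{lem:anosovcoup} are in force.

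\textbf{Main obstacle.} The delicate point is that the perturbative estimate (Corollary \ref{cor:statstab}) only controls the difference $\L_{T_{g}^\eps}\cdots - \L_T^m$ in the \emph{weak} norm $\|\cdot\|_{0,q+1}$, whereas we want contraction in the \emph{strong} norm $\|\cdot\|_{1,q}$; bridging this gap without losing a power of $\theta$ requires carefully interleaving the perturbative step with the Lasota--Yorke step over a block whose length $m$ is fixed \emph{before} $\eps$ is sent to zero, so that the large constant $C(m)$ from the telescoping in Corollary \ref{cor:statstab} is absorbed by the smallness of $|\eps|$. One must also check that zero average is genuinely preserved along the sequential composition (immediate, since $\L_{T_g^\eps}$ is dual to composition with a diffeomorphism, hence $(\L_{T_g^\eps}h)(1) = h(1\circ T_g^\eps) = h(1)$) and that the compactness/spectral-gap input for $\L_T$ in Step 1 is legitimate --- this is exactly the content of the remark following Proposition \ref{prop:LY-T}.
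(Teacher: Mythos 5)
Your proposal is correct and follows essentially the same route as the paper: apply the sequential Lasota--Yorke inequality to the tail of the composition, compare the head block with $\L_T^m$ in the weak norm at a cost of $C(m)|\eps|$ via Corollary \ref{cor:statstab} and Remark \ref{rem:anosov}, use the spectral gap of $\L_T$ on the zero-average subspace to kill $\|\L_T^m h\|_{0,q+1}$, and fix the block lengths before shrinking $\eps$. The only blemish is the claim that the Lasota--Yorke step over the second half-block yields $A\theta^{2m}\|h\|_{1,q}$ rather than $A\theta^{m}\|(m\text{-fold product})h\|_{1,q}\le A'\theta^{m}\|h\|_{1,q}$, which is harmless since only smallness in $m$ is needed.
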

\begin{proof}
By Proposition \ref{prop:LY-Seq}, Corollary \ref{cor:statstab} (in particular \eqref{eq:iterss2}) and the fact that $\L_T$ admits a spectral gap on $\B^{1,q}$, we have

\begin{equation} \label{eq:LY2scit}
\begin{split}
&\|\L_{T_{g_{n+m}}^\eps}\dots \L_{T_{g_1}^\eps}h\|_{1,q} \le  A \theta^n\|\L_{T_{g_{m}}^\eps}\dots \L_{T_{g_1}^\eps}h\|_{1,q}+B\|\L_{T_{g_{n}}^\eps}\dots \L_{T_{g_1}^\eps}h\|_{0,q+1}\\
&\le  A \theta^n\|\L_{T_{g_{m}}^\eps}\dots \L_{T_{g_1}^\eps}h\|_{1,q}+B\|(\L^m_T-\L_{T_{g_{m}}^\eps}\dots \L_{T_{g_1}^\eps})h\|_{0,q+1}+B\|\L^m_Th\|_{0,q+1}\\
&\phantom{\le} +B\|\L^m_Th\|_{0,q+1}\\
&\le A_1 \theta^n\|h\|_{1,q}+ B(m,K)|\eps| \|h\|_{1,q}+    B \|\L^m_Th\|_{0,q+1}\\
&\le\left( A_1 \theta^n+B(m,K)|\eps|+    B_1\sigma^m \right)\|h\|_{1,q},
\end{split}
\end{equation}
for some constants $B(m,K),B_1>0$ and $\sigma\in(0,1)$, since $h(1)=0$. Choose $n$ so that $ A_1 \theta_0^n\le\frac{\theta}{3}$ for some $\theta\in(0,1)$. Then choose $m$ so that $B_1\sigma^m\le \frac{\theta}{3}$. Finally, choose $|\eps|$ small enough so that $B(m,K)|\eps|\le\frac{\theta}{3}$. Therefore,
$$ \|\L_{T_{g_{n+m}}^\eps}\dots \L_{T_{g_1}^\eps} h\|_{1,q}\le \theta\|h\|_{1,q}.$$
This implies
\[
	\|\L_{T_{g_{n}}^\eps}\dots \L_{T_{g_1}^\eps}h\|_{1,q}\leq C \theta^n\|h\|_{1,q}  \quad\text{for all }n \in \NN.
	\]
\end{proof}

\subsection{The invariance of $\overline{\B}(K,q)$}\label{sec:Leps is well defined}\ \\
We show that $\L_\eps$ is well defined and that $\overline{\B}(K,q)$, defined in \eqref{eq:bk}, is invariant. 
\begin{proof}[\bf Proof of Proposition \ref{lem:invariance}] 
Let $h\in \C^1(M)$. Note that for $h\ge 0$, $\L_\eps(h)\ge 0$ and $\int\L_\eps(h)=\int h$. Consequently,  $\L_\eps(h)\in \cM_1(M)\cap \C^1(M)$. The first invariance results follow then by closing with respect to the $\|\cdot\|_{q+1}$ norm and recalling equation \eqref{eq:LY1Th}. 

Next, let $h\in \B(K,q)$. By \eqref{eq:LY1Th} we have
\begin{equation} \label{eq:LY1sc}
\|\L_{\eps}^n (h)\|_{0,q}\leq  A\|h\|_{0,q}
\end{equation}
and by \eqref{eq:LY2Th}
\begin{equation} \label{eq:LY2sc}
\begin{split}
&\|\L_{\eps}^n (h)\|_{0,q}\leq  A  \theta^n\|h\|_{0,q}+ B\|h\|_{0,q+1};\\
&\|\L_{\eps}^n (h)\|_{1,q}\leq  A \theta^n\|h\|_{1,q}+ B\|h\|_{0,q+1}.
\end{split}
\end{equation}
Thus, we have
\begin{align}\label{eq:little_growth}
\|\L_{\eps}^n (h)\|_{1,q}
& \leq A\theta^n K + B \|h\|_{0,q+1}.
\end{align}
Choose $N$ large enough, such that $A\theta^N=\beta\in(0,1)$. By \cite[Remark 2.15]{BL21} we have $\| h\|_{0,q+1} \leq \| h\|_{1}$ (where $\| h\|_{1}=\int_M |h|$) and since we work with nonnegative distributions we have for each $n\geq N$, 
\begin{equation}\label{eq:invariant}
\begin{split}
\|\L_{\eps}^n (h)\|_{1,q} &\leq \theta^n AK +  B \int h\\
&= \beta K+  B.
\end{split}
\end{equation}
Choose $K$ such that $K \geq K_{\min}:=\frac{B}{1-\beta}$ holds. This completes the proof of the proposition. 
\end{proof}

We are now ready to make a statment about the fixed points of $\L^N_\eps$.
\begin{proposition} \label{prop:uniqueN}
There exists $N \in \NN$, $K_{\min} \ge 0$, and $\eps_4 > 0$ such that $\L_{\eps}^N$ has a fixed point in $\overline{\B}(K,q)$ for all $K \ge K_{\min}$ and all $|\eps| < \eps_4$.
\end{proposition}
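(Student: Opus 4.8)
The plan is to obtain the fixed point of $\L_\eps^N$ from the Schauder--Tychonoff fixed point theorem, applied to $\L_\eps^N$ acting on $\overline{\B}_K$ regarded as a subset of the Banach space $\B^{0,q+1}$. Three points have to be checked: that $\overline{\B}_K$ is a nonempty, convex, compact subset of $\B^{0,q+1}$; that $\L_\eps^N$ maps $\overline{\B}_K$ into itself; and that $\L_\eps^N$ is continuous for the $\|\cdot\|_{0,q+1}$-topology.

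The first point is soft analysis. For $K$ large enough $\B_K$ contains the constant density $1$, hence is nonempty, and it is convex because all of its defining constraints ($h\ge0$, $\int h=1$, $\|h\|_{1,q}\le K$) are convex, so $\overline{\B}_K$ is convex too. Since $\B_K$ is bounded by $K$ in the $\|\cdot\|_{1,q}$-norm, Proposition \ref{prop:LY-T} (relative compactness of $\|\cdot\|_{1,q}$-balls for the $\|\cdot\|_{0,q+1}$-topology) shows that $\overline{\B}_K$ is compact in $\B^{0,q+1}$; in addition, because $\|\cdot\|_{1,q}$ is lower semicontinuous along $\|\cdot\|_{0,q+1}$-convergence and positivity and unit mass pass to weak limits, every element of $\overline{\B}_K$ is a probability density with $\|\cdot\|_{1,q}\le K$ (so, in particular, $\overline{\B}_K\subset\cM_1(M)\cap\B^{1,q}$ and $\|h\|_{0,q+1}=1$ on $\overline{\B}_K$). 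The invariance property $\L_\eps^N(\overline{\B}_K)\subset\overline{\B}_K$ for $K\ge K_{\min}$ and $|\eps|<\eps^*_1$ is exactly the second assertion of Proposition \ref{lem:invariance}.

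The substantive point is continuity, and I would prove the stronger statement that $\L_\eps^N$ is Lipschitz on $\overline{\B}_K$ for $\|\cdot\|_{0,q+1}$. For $h,\tilde h\in\overline{\B}_K$ put $h_j=\L_\eps^j(h)$ and $\tilde h_j=\L_\eps^j(\tilde h)$, $0\le j\le N$; since $h_j=\L_{T^\eps_{h_{j-1}}\circ\cdots\circ T^\eps_{h_0}}h$ and $\|h\|_{0,q+1}=1$, inequality \eqref{eq:LY2Th} gives $\|h_j\|_{1,q}\le K'$ for a constant $K'=K'(A,B,K)$, and likewise for $\tilde h_j$. Writing
\[
\L_\eps^{j+1}(h)-\L_\eps^{j+1}(\tilde h)=\big(\L_{T^\eps_{h_j}}-\L_{T^\eps_{\tilde h_j}}\big)h_j+\L_{T^\eps_{\tilde h_j}}\big(h_j-\tilde h_j\big),
\]
I would estimate the first summand by \eqref{eq:pert1} (using $\|h_j\|_{1,q}\le K'$) and the second by the uniform $\|\cdot\|_{0,q+1}$-bound \eqref{eq:LY1Th} for the sequential transfer operators, obtaining
\[
\|h_{j+1}-\tilde h_{j+1}\|_{0,q+1}\le\big(A+CK'|\eps|\big)\,\|h_j-\tilde h_j\|_{0,q+1},
\]
and hence, by iteration, $\|\L_\eps^N(h)-\L_\eps^N(\tilde h)\|_{0,q+1}\le(A+CK'|\eps|)^N\|h-\tilde h\|_{0,q+1}$. (One could instead feed the resulting bound on $\max_j\|h_j-\tilde h_j\|_{0,q+1}$ into \eqref{eq:iterss} of Corollary \ref{cor:statstab}; the estimate comes out the same.)

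Combining the three ingredients, Schauder--Tychonoff yields a fixed point of $\L_\eps^N$ in $\overline{\B}_K$; taking $\eps_4=\min\{\eps^*_1,\eps_1,\eps_2\}$ and $N$, $K_{\min}$ as in Proposition \ref{lem:invariance}, the assertion holds for all $K\ge K_{\min}$ and $|\eps|<\eps_4$. I do not expect any individual inequality to be the bottleneck --- they all descend from Proposition \ref{prop:LY-Seq}, Lemma \ref{lem:lip} and Corollary \ref{cor:statstab}. The point requiring the most care is the infrastructure around $\overline{\B}_K$: that it is genuinely compact in $\B^{0,q+1}$, that the bound $\|\cdot\|_{1,q}\le K$ really does survive the $\|\cdot\|_{0,q+1}$-closure (so that the perturbative estimates can be applied with the transported densities $h_j$ lying in $\B^{1,q}$ with controlled norm), and that all indices stay within the admissible ranges $q\le r-2$, $q+1\le r-1$ demanded by Propositions \ref{prop:LY-T}--\ref{prop:LY-Seq}.
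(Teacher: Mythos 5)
Your proposal is correct and follows essentially the same route as the paper: invariance of $\overline{\B}_{K}$ from Proposition \ref{lem:invariance}, Lipschitz continuity of $\L_\eps$ in the $\|\cdot\|_{0,q+1}$-norm on a $\|\cdot\|_{1,q}$-bounded set via the decomposition $(\L_{T^\eps_{h_j}}-\L_{T^\eps_{\tilde h_j}})h_j+\L_{T^\eps_{\tilde h_j}}(h_j-\tilde h_j)$ together with \eqref{eq:pert1} and \eqref{eq:LY1Th}, and then the Schauder--Tychonoff theorem on the convex compact set $\overline{\B}_{K}$. The only difference is that you spell out the compactness and closure properties of $\overline{\B}_{K}$ in more detail than the paper does.
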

\begin{proof}
Let $h_1,h_2 \in \overline{\B}(K,q)$. Note that, by equation \eqref{eq:little_growth} we have 
\begin{equation}\label{eq:contractw0}
\|\L_\eps^n h_i\|_{1,q}\leq AK+B.
\end{equation}
Then, by Proposition \ref{prop:LY-Seq} and Equation \eqref{eq:pert1} we have, for all $\tilde h_i\in \B(AK+B,q)$,
\[
\begin{split}
\|\L_\eps \tilde h_1-\L_\eps \tilde h_2\|_{0,q+1}&\leq \|\L_{T^\eps_{\tilde h_1}} \tilde h_1-\L_{T^\eps_{\tilde h_1}} \tilde h_2\|_{0,q+1}+\|\L_{T^\eps_{\tilde h_1}} \tilde h_2-\L_{T^\eps_{\tilde h_2}}\tilde h_2\|_{0,q+1}\\
&\leq (A+\eps C [AK+B])\|\tilde h_1-\tilde h_2\|_{0,q+1}.
\end{split}
\]
Hence, $\L^N_{\eps}|_{\overline{\B}(K,q)}$ is continuous in the weak norm $\|\cdot\|_{0,q+1}$, being the composition of continuous operators. Moreover, we have shown in Proposition \ref{lem:invariance} that $\overline{\B}(K,q)$ is invariant under the action of $\L_\eps^N$. Since $\overline{\B}(K,q)$ is a convex, compact metric space, we obtain that $\L^N_{\eps}$ has at least one fixed point in $\overline{\B}(K,q)$. 
\end{proof}
\begin{remark} Note that the Proposition \ref{prop:uniqueN} does not say much on the dynamics of $\L_\ve$, just the existence of  periodic orbits in $\overline{\B}(K,q)$. To know if $\L_\eps$ has a unique fixed point, some more work is needed.
\end{remark}

\subsection{ Sequential Anosov maps and a stronger norm}\ \\
In the following lemma we prove a Lasota-Yorke inequality for Anosov diffeomorphisms, for the Banach space $\B^{2,q}$ (see \eqref{eq:highernorms} to recall the definition of this space). In particular, Lemma \ref{lem:SLY} below can be useful outside the scope of this paper. We note however, that we do not prove that the unit ball of $\B^{2,q}$ is compactly embedded in $\B^{1,q+1}$ since the latter is not needed for the current work\footnote{We warn the reader that to obtain information about the spectral properties of $\L_T$ when acting on $\B^{2,q}$, a compact embedding result is needed.}.
\begin{lemma}\label{lem:SLY}
There exists $\theta\in (\max\{\nu,\lambda^{-1}\},1)$ and constants $ A_1,  A_2>0$ such that for all $h \in \B^{2,q}$, $n \in \NN$, $q\in \{1,\dots,r- 2\}$, holds true
$$\|\mathcal L^{n}_T h\|_{2,q}\le A_1\theta^{n} \|h\|_{2,q}+A_2\|h\|_{1,q+1}.$$
\end{lemma}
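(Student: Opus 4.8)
The plan is to mimic the proof of the lower-order Lasota--Yorke inequality (Proposition~\ref{prop:LY-T}) one derivative higher, reducing everything to the already-known estimates on $\B^{0,q}$ and $\B^{1,q}$ together with the commutation relation between $\L_T$ and spatial derivatives. The starting point is the identity
\[
\partial_{x_i}(\L_T h)=\sum_{j=1}^{d}\L_T\!\left(\,[A^{(j)}_i]\,\partial_{x_j}h\,\right)+\L_T\!\left(B_i\,h\right),
\]
valid in charts, where $A^{(j)}_i$ and $B_i$ are $\C^{r-1}$ coefficients built from $D T^{-1}$ and its derivatives and from the Jacobian $|\det DT|$; more precisely $[A^{(j)}_i]=\partial_{x_i}(T^{-1})_j\circ$(chart changes) encodes the pullback of the differential, so that the "leading" term behaves like the transfer operator of $T$ acting on the vector $(\partial_{x_1}h,\dots,\partial_{x_d}h)$ contracted by the inverse differential, while the "lower-order" term $\L_T(B_i h)$ is controlled by $\|h\|_{0,q+1}\le \|h\|_{1,q}$ (using \cite[Remark 2.15]{BL21}) after multiplication by the smooth bounded function $B_i$. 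Iterating to time $n$ produces, schematically, $\partial_{x_i}(\L_T^n h)=\sum_j \L_T^n\!\big((D T^{-n})_{ij}\,\partial_{x_j}h\big)+(\text{terms with}\ \le n-1\ \text{applications of}\ \L_T\ \text{hitting}\ h)$, and the matrix $DT^{-n}$ restricted to the unstable cone is contracted by $c_0^{-1}\nu_0^{n}$ by \eqref{eq:anosov}, which is exactly the gain responsible for the factor $\theta^n$.

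Next I would feed this into the definition of $\|\cdot\|_{2,q}^*$: for a test object $(W,\varphi^i)\in\Omega_{L,q+2,d}$ one writes $\int_M \partial_{x_i}(\L_T^n h)\,\divv\varphi^i$, substitutes the identity above, and recognizes $\int_M \L_T^n\big((DT^{-n})_{ij}\partial_{x_j}h\big)\,\divv\varphi^i=\int_M \partial_{x_j}h\;\big[(DT^{-n})_{ij}\,(\divv\varphi^i)\circ T^n\big]$. The bracket is of the form $\divv\psi^j$ plus a zeroth-order error (moving the multiplier $(DT^{-n})_{ij}$ through the divergence), and the key point is that $\psi^j$, after the change of foliation $W\mapsto T^{-n}W$, is an admissible test function in $\Omega_{L,q+2,d}$ up to a multiplicative constant times the contraction factor coming from $\|DT^{-n}|_{C}\|\le c_0^{-1}\nu_0^n$; this uses the foliation-regularity bookkeeping of Appendix~\ref{sec:foliation} and \cite[Lemma 2.15]{BL21}, exactly as in the proof of the $\|\cdot\|_{1,q}$ bound in \cite{BL21}, just with $q$ replaced by $q+1$. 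The zeroth-order errors, and the lower-order terms $\L_T^k(B\cdot h)$, are all estimated by $\|h\|_{1,q}$ via the already-established Proposition~\ref{prop:LY-T}. This yields
\[
\|\L_T^n h\|_{2,q}^*\le A_1'\,\theta^n\,\|h\|_{2,q}^* + A_2'\,\|h\|_{1,q},
\]
and then combining with $b$ times the $\|\cdot\|_{1,q}$ bound of Proposition~\ref{prop:LY-T} (choosing $b$ appropriately, or simply absorbing) gives $\|\L_T^n h\|_{2,q}\le A_1\theta^n\|h\|_{2,q}+A_2\|h\|_{1,q}$; density of $\C^2(M,\RR)$ in $\B^{2,q}$ finishes the proof.

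The main obstacle I anticipate is purely technical: carefully checking that when one pushes the multiplier $(DT^{-n})_{ij}$ inside the divergence and changes the foliation, the resulting object genuinely lies in $\Omega_{L,q+2,d}$ with the right constant, i.e. that the regularity budget (now $q+2$ derivatives transverse to the foliation, with the extra derivative from $\partial_{x_i}$ and the extra unit already present in the definition \eqref{eq:highernorms}) is not exceeded and that the contraction $\nu_0^n$ is not eaten by the growth of derivatives of $T^n$ along the foliation. This is precisely the place where the specific structure of the spaces in \cite{BL21}---the fact that the strong norm "behaves like BV"---is used, and it is the reason Lemma~\ref{lem:prop-norm} and the Faà di Bruno-type estimates of the Appendix are invoked; but since the bound is only claimed up to $q\le r-2$ (leaving two derivatives of margin) there is enough room, and no compactness statement is needed. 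Everything else is a bookkeeping exercise parallel to \cite[Proposition 3.2]{BL21}.
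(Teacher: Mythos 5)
Your overall architecture — commuting $\partial_{x_i}$ through $\L_T^n$, integrating against $\divv\varphi^i$, moving the resulting matrix multiplier inside a divergence, and relegating all lower-order terms to $\|h\|_{1,q}$ — is exactly the skeleton of the paper's proof (cf.\ \eqref{eq:diftran}, \eqref{eq:divsplit}, \eqref{eq:semiess}). The gap is in the one step you flag as "purely technical": you assert that the new test object $\psi^j\sim (DT^{n})^{-1}\bigl[(DT^{-n})_{ij}\varphi^i\bigr]\circ T^n$ is admissible in $\Omega_{L,q+2,d}$ up to a factor $\|DT^{-n}|_{C}\|\le c_0^{-1}\nu_0^{n}$. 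This is false: by \eqref{eq:anosov}, $DT^{-n}$ \emph{expands} vectors in the stable cone $C$ by $c_0\nu_0^{-n}$ (it only contracts the unstable directions, by $\lambda_0^{-n}$), so the matrix $DT^{-n}$ has operator norm of order $\nu_0^{-n}$, not $\nu_0^{n}$. Since $\varphi^i$ is an arbitrary $\C^{q+2}$ vector field, not one lying in the unstable cone, "restricting to the unstable cone" is not available, and $\|\psi^j\|^{T^{-n}W}_{q+2}$ is exponentially \emph{large} in $n$ without further work. Your scheme as written therefore does not produce the factor $\theta^n$ in front of $\|h\|^*_{2,q}$.

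The missing idea is the anisotropic splitting of the test field via the projectors $\pi^u,\pi^s$ of Appendix \ref{sec:projection}. Since two matrix factors of $DT^{-n}$ hit the test function (one from the commutation identity, one from rewriting $(\divv\varphi)\circ T^n$ as a divergence), the paper decomposes into four terms \eqref{eq:fourterms}. The $uu$ term is genuinely contracted (by $\lambda^{-2n}$, via \eqref{eq:piu} applied twice, up to $C_n/\varpi$ corrections killed by taking $\varpi$ large); the mixed and $ss$ terms are \emph{not} estimated by any norm contraction but by exploiting the divergence structure along the stable foliation (\eqref{eq:pis}, \eqref{eq:pis2}), which pushes one derivative back onto the foliation and lands on the lower-order quantity $\|h\|^*_{1,q+1}$, absorbed into $A_2\|h\|_{1,q}$. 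This is precisely where the anisotropy of the spaces is used, and it cannot be replaced by the uniform bound on $DT^{-n}$ that your argument relies on.
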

\begin{proof}
All the operations in this proof are understood in the charts introduced in subsection \ref{subsec:map}. We have
\begin{equation}\label{eq:diftran}
\begin{split}
&\partial_{x_i}\mathcal{L}_T ^nh 
=\partial_{x_i} \left( \frac{h}{|\det DT^n|} \circ T^{-n}\right) \\
&=\sum_{j=1}^d \partial_{x_j} \left(\frac{ h}{|\det DT^n|}\right) \circ T^{-n} \cdot \{DT^{-n}\}_{ji} \\
&=\sum_{j=1}^d \frac{\partial_{x_j} h}{|\det DT^n|} \circ T^{-n} \cdot \{DT^{-n}\}_{ji}\\
&\phantom{=}-\sum_{j=1}^d \frac{h\partial_{x_j}|\det DT^n|}{|\det DT^n|^2} \circ T^{-n} \cdot \{DT^{-n}\}_{ji} \\
&=\sum_{j=1}^d \mathcal{L}_T^n(\partial_{x_j}h) \cdot \{DT^{-n}\}_{ji} - \sum_{j=1}^d \mathcal{L}_T^n\left(\frac{h\partial_{x_j}|\det DT^n|}{|\det DT^n|}\right) \cdot \{DT^{-n}\}_{ji},
\end{split}
\end{equation}
Accordingly, letting $(W,\vf^i)\in \Omega_{L,q+2, d}$ and using \eqref{eq:diftran},  we can write
\[
\begin{split}
&\sum_{i=1}^d \int\partial_{x_i}\mathcal{L}_T^n h \divv\vf^i\\
&=\sum_{i=1}^d \sum_{j=1}^d\int (\partial_{x_j}h) \cdot\left( \{DT^{-n}\}_{ji} \divv\vf^{i}\right)\circ T^n \\
&\phantom{=}
- \sum_{i=1}^d \sum_{j=1}^d\int \left(\frac{h\partial_{x_j}|\det DT^n|}{|\det DT^n|}\right) \cdot \left(\{DT^{-n}\}_{ji} \divv\vf^{i}\right)\circ T^n\\
&=\sum_{j=1}^d\int (\partial_{x_j}h) \divv( \tilde\vf^j)\circ T^n-\sum_{j=1}^d\int (\partial_{x_j}h)\sum_{i=1}^d\sum_{l=1}^d \vf_l^i \circ T^n \partial_{x_l}\left[ \{DT^{-n}\}_{ji}\circ T^n \right]\\
&-\sum_{j=1}^d\int \left(\frac{h\partial_{x_j}|\det DT^n|}{|\det DT^n|}\right) \cdot (\divv\tilde \vf^j)\circ T^n\\
&\phantom{=}
+\sum_{j=1}^d\int \left(\frac{h\partial_{x_j}|\det DT^n|}{|\det DT^n|}\right) \cdot \sum_{i=1}^d\sum_{l=1}^d \vf_l^i \circ T^n \partial_{x_l}\left[ \{DT^{-n}\}_{ji}\circ T^n \right],
\end{split}
\]
where we have used the notation $ \sum_{i=1}^d\{DT^{-n}\}_{ji}\vf^i:= \tilde\vf^j$.
To continue, we need the following fact: for $h,f,\vf \in \C^r$, 
\begin{equation}\label{eq:divsplit}
\begin{split}
&\int hf \divv \vf = \int h \divv(f\vf) - \int h \sum_{l=1}^d \vf_l \partial_{x_l}f\\
& (\divv \vf)\circ T^n=\divv ((DT^{n})^{-1}\vf\circ T^n)-\sum_{i=1}^d \partial_{x_i}(DT^n)^{-1}_{ij}\vf_{j}\circ T.
\end{split}
\end{equation}
Using \eqref{eq:divsplit} and integrating by part, we can write the above as
\begin{equation}\label{eq:semiess}
\begin{split}
\sum_{i=1}^d \int\partial_{x_i}\mathcal{L}_T^n h \divv\vf^i=&
\sum_{j=1}^d \int (\partial_{x_j}h) \cdot \divv\left(  (DT^{n})^{-1}\tilde \vf^{j}\circ T^n \right)\\
+\int h\divv \Theta^1+\int h\Theta^0.
\end{split}
\end{equation}
where $\|\Theta^1\|^{T^{-n}W}_{q+1}+\|\Theta^0\|^{T^{-n}W}_{q+1}\leq C_n$. This follows from the fact that $T\in\cC^r$ and \cite[Lemma 2.15]{BL21}.

It remains to control the term with two derivatives.
To this end we use the projectors $\pi^u,\pi^s$ similarly to what is done in \cite{BL21}. See appendix \ref{sec:projection} for a precise defintion and their properties.

We can thus write
\begin{equation} \label{eq:fourterms}
\begin{split}
&\sum_{j=1}^d \int (\partial_{x_j}h) \cdot \divv\left(  (DT^{n})^{-1}\tilde \vf^{j}\circ T^n \right)=\\
=&-\sum_{j,i,k,l,t,t'=1}^d\int h\partial_{x_j} \partial_{x_i}\left\{(DT^n)^{-1}_{it}\pi^u_{tl}\circ T^n
(DT^n)^{-1}_{jt'}\pi^u_{t'k}\circ T^n \vf^k_l\circ T^n\right\}\\
&-\sum_{j,i,k,l,t,t'=1}^d\int h\partial_{x_j} \partial_{x_i}\left\{(DT^n)^{-1}_{it}\pi^u_{tl}\circ T^n
(DT^n)^{-1}_{jt'}\pi^s_{t'k}\circ T^n \vf^k_l\circ T^n\right\}\\
&-\sum_{j,i,k,l,t,t'=1}^d\int h\partial_{x_j} \partial_{x_i}\left\{(DT^n)^{-1}_{it}\pi^s_{tl}\circ T^n
(DT^n)^{-1}_{jt'}\pi^u_{t'k}\circ T^n \vf^k_l\circ T^n\right\}\\
&-\sum_{j,i,k,l,t,t'=1}^d\int h\partial_{x_j} \partial_{x_i}\left\{(DT^n)^{-1}_{it}\pi^s_{tl}\circ T^n
(DT^n)^{-1}_{jt'}\pi^s_{t'k}\circ T^n \vf^k_l\circ T^n\right\}.
\end{split}
\end{equation}
Let us analyze the above terms one by one. If we set 
\[
\boeta^j_{l}=\sum_{t',k=1}^{d}(DT^n)^{-1}_{jt'}\circ T^{-n}\pi^u_{t'k} \vf^k_l,\\
\]
since $\pi^u\boeta^j$ belongs to the unstable cone, using \eqref{eq:piu} twice we have, for the estimate of the first term
\[
\begin{split}
\|(DT^{-n})^{-1}\circ T^{-n}\pi^u\boeta^{j}\|^{W}_{q+2}&\leq C\lambda^{-n}\|\boeta^j\|^W_{q+2}+\frac{C_n}{\varpi}\| \boeta^{j}\|^{W}_{q+1}\\
&\leq C\lambda^{-2n}\|\bar\vf\|^W_{q+2}+\frac{C_n}{\bar\varpi}\| \bar\vf\|^{W}_{q+1},
\end{split}
\]
where $\|\bar\vf\|^W_{q'}=\sup_{i,j}\|\vf_i^j\|^W_{q'}$ for each $q'\leq r$.
Consequently, by \cite[Lemma 2.11]{BL21}, we obtain
\begin{equation}\label{eq:annoy1}
\|(DT^{-n})^{-1}(\pi^u\boeta^j)\circ T^n\|^{T^{-n}W}_{q+2}\leq C\lambda^{-2n}\|\bar \vf\|^W_{q+2}+\frac{C_n}{\varpi}\| \bar \vf\|^{W}_{q+1}.
\end{equation}
Note that the second and third terms in \eqref{eq:fourterms} are essentially the same. Indeed, exchanging $i$ and $j$ in the second yields the third apart from the fact that $\vf^k_l$ is substituted by $\vf^l_k$, which is irrelevant. We can thus analyze only the third term.
By \eqref{eq:pis} and \eqref{eq:piu}, we have
\begin{equation}\label{eq:annoy2}
\begin{split}
&\|\sum_{i=1}^d\partial_{x_i}[(DT^{-n})^{-1}\pi^s\circ T^{n}\boeta^j\circ T^{n}]_i\|^{T^{-n}W}_{q+1}\leq C_n\|\boeta^j\|^W_{q+2}\\
&\phantom{\|\sum_{i=1}^d\partial_{x_i}[(DT^{-n})^{-1}\pi^s\circ T^{n}}
\leq C_n\lambda^{-n}\|\bar \vf\|^W_{q+2}+\frac{C_n}{\varpi}\|\bar \vf\|^W_{q+1}.
\end{split}
\end{equation}
To treat the last term in \eqref{eq:fourterms}, let
\[
\btheta^j_ {l}=\sum_{t',k=1}^{d}(DT^n)^{-1}_{jt'}\circ T^{-n}\pi^s_{t'k} \vf^k_l,
\]
we can then write it as
\[
\sum_{j=1}^d\int h\partial_{x_j}\left\{\sum_{i,t,l=1}^d \partial_{x_i}(DT^n)^{-1}_{it}\pi^s_{tl}\circ T^n
\btheta^j_ {l}\circ T^n\right\}
\]
Then, by \eqref{eq:pis} and \eqref{eq:pis2} we have
\[
\|\sum_{i,t,l=1}^d \partial_{x_i}(DT^n)^{-1}_{is}\pi^s_{tl}\circ T^n
\btheta^j_ {l}\|^{T^{-n}W}_{q+1}\leq C_n\sup_{l,j}\|\btheta^j_ {l}\|^W_{q+2}\leq C_{n,\varpi}\|\bar \vf\|^W_{q+2}.
\]
The above implies that the last term is bounded by
\[
\sum_{j=1}^d\int h\partial_{x_j}\left\{\sum_{i,t,l=1}^d \partial_{x_i}(DT^n)^{-1}_{it}\pi^s_{tl}\circ T^n
\btheta^j_ {l}\circ T^n\right\}\leq C_{n,\varpi}\|h\|^*_{1,q+1}.
\]
 By choosing $\varpi$ large enough in \eqref{eq:annoy1}, using \eqref{eq:annoy2} and \eqref{eq:semiess}, we get
\[
\|\mathcal L^{n}_T h\|^*_{2,q}\le C\lambda^{-2n} \|h\|_{2,q}^*+C_n\|h\|_{1,q+1}.
\]
The above equation can be iterated with steps $n_0$ such that $C\lambda^{-2n_0}\leq \theta<1$.
Now choosing $b$ in the definition of \eqref{eq:highernorms} large enough (depending on the fixed $n_0$) and using the Lasota-Yorke inequality in the $\|\cdot\|_{1,q}$ from \cite{BL21}, we get for all $n\in\mathbb N$
$$\|\mathcal L^{n}_T h\|_{2,q}\le A_1\theta^{n} \|h\|_{2,q}+A_2\|h\|_{1,q+1}.$$
 \end{proof}
Using Remark \ref{rem:seq} and Lemma \ref{lem:SLY}, we obtain the following corollary:
 
\begin{corollary}\label{cor:SLYconsist}
Let $\eps_1 > 0$ be the same as in Proposition \ref{prop:LY-Seq}. For each $|\varepsilon| < \varepsilon_1$ and $\theta\in (\max\{\nu,\lambda^{-1}\},1)$ there exist constants $ A_1,  A_2>0$ such that for all $h$, $g_0,\dots,g_{n-1}  \in \B^{2,q}\cap \B_1^{0,q}$, $n \in \NN$, $q\in \{1,\dots,r- 2\}$, holds true
\begin{equation} \label{eq:SLY1sc}
\|\L_{T_{g_{n-1}}^{\varepsilon} \circ \cdots \circ {T_{g_0}^{\varepsilon}}} h\|_{2,q}\leq  A \theta^n\|h\|_{2,q}+ B\|h\|_{1,q+1};
\end{equation}
that implies
$$
\|\mathcal L_\eps^{n} (h)\|_{2,q}\le A_1\theta^{n} \|h\|_{2,q}+A_2\|h\|_{1,q+1}.$$
\end{corollary}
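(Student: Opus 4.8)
The plan is to transfer the single-map estimate of Lemma \ref{lem:SLY} to arbitrary finite concatenations of coupled maps, in exactly the same way that Proposition \ref{prop:LY-Seq} was obtained from Proposition \ref{prop:LY-T}, and then to specialize to the self-consistent operator.

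First I would invoke Remark \ref{rem:seq} (i.e. Lemma \ref{lem:anosovcoup}): there is $\eps_1>0$ such that for $|\eps|<\eps_1$ every concatenation $T^{(n)}:=T_{g_{n-1}}^\eps\circ\cdots\circ T_{g_0}^\eps$ is an Anosov diffeomorphism in $\cC^r$, whose hyperbolicity constants $\lambda,\nu$, cone fields, and all the $\cC^r$-norms of $DT^{(n)}$, $(DT^{(n)})^{-1}$ and $|\det DT^{(n)}|$ are bounded uniformly in $n$, in the sequence $(g_i)$, and in $\eps$. These are exactly the quantities entering the proof of Lemma \ref{lem:SLY}: the identity \eqref{eq:diftran} for $\partial_{x_i}\mathcal L^n_T h$ becomes the same identity with $T^n$ replaced by $T^{(n)}$ (the chain and Leibniz rules do not care whether the map is an iterate or a composition), the splitting \eqref{eq:semiess}--\eqref{eq:fourterms} via \eqref{eq:divsplit} and the projectors $\pi^u,\pi^s$ of Appendix \ref{sec:projection} goes through verbatim, and the contraction factors $\lambda^{-n}$, $\lambda^{-2n}$ in \eqref{eq:annoy1}--\eqref{eq:annoy2} persist because the uniform expansion bound \eqref{eq:anosov} holds for $T^{(n)}$ with the common $\lambda$. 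One thus obtains $\|\L_{T^{(n)}}h\|^*_{2,q}\le C\lambda^{-2n}\|h\|^*_{2,q}+C_n\|h\|_{1,q}$ with $C$ independent of $n$, $(g_i)$ and $\eps$.

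Next I would iterate: writing $n=kn_0+s$ with $C\lambda^{-2n_0}\le\theta<1$, I split $T^{(n)}$ into $k$ blocks of length $n_0$ (plus a short remainder) and apply the single-block bound to each block, which is legitimate precisely because the per-block constants are uniform over all choices of the intermediate measures. Absorbing the lower-order terms into the $\|\cdot\|_{1,q}$-part by the sequential $\|\cdot\|_{1,q}$ Lasota--Yorke inequality of Proposition \ref{prop:LY-Seq}, together with the choice of $b$ in \eqref{eq:highernorms} large enough (as in the last lines of the proof of Lemma \ref{lem:SLY}), yields \eqref{eq:SLY1sc}. Finally, for the self-consistent statement I set $g_i=h_i:=\L_\eps^i(h)$, so that $\L_\eps^n(h)=\L_{T^\eps_{h_{n-1}}\circ\cdots\circ T^\eps_{h_0}}h$; Proposition \ref{lem:invariance} guarantees each $h_i\in\cM_1(M)$, hence \eqref{eq:SLY1sc} applies and gives the displayed bound $\|\mathcal L_\eps^n(h)\|_{2,q}\le A_1\theta^n\|h\|_{2,q}+A_2\|h\|_{1,q}$.

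The only genuine point requiring care is the iteration step: since $T^{(n)}$ is a composition of distinct maps rather than a power, one cannot raise a single operator to the $k$-th power, and one must check that the per-block constants in the $\|\cdot\|^*_{2,q}$ estimate do not deteriorate along the sequence. This, however, is exactly what the uniformity in Remark \ref{rem:seq} provides, so no idea beyond what already appears in the proof of Proposition \ref{prop:LY-Seq} is needed; the argument is a routine "sequential" adaptation of Lemma \ref{lem:SLY}.
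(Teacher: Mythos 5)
Your proposal is correct and follows exactly the route the paper intends: the paper's "proof" of this corollary is the single sentence preceding it ("Using Remark \ref{rem:seq} and Lemma \ref{lem:SLY}, we obtain the following corollary"), i.e., replace $T^n$ by the concatenation in the proof of Lemma \ref{lem:SLY}, use the uniformity of Lemma \ref{lem:anosovcoup} for the constants, and specialize to $g_i=\L_\eps^i(h)$. You have simply spelled out the details (including the block-iteration step) that the paper leaves implicit.
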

\begin{lemma} \label{lem:SDifference}
For $h\in \B^{2,q}\cap \B_1^{0,q}$, $q \in \{1,\dots,r- 2\}$ we have
\[
\|(\L_{T_{g_1}^{\eps}}-\L_{T_{g_2}^{\eps}})h\|_{1,q+1} \leq  Cd_{C^{r}}(T_{g_1}^{\eps},T_{g_2}^{\eps} )\|h\|_{2,q};
\]
\[
\|(\L_{T_{g}^{\eps}}-\L_{T_{g}^{\eps'}})h\|_{1,q+1} \leq  Cd_{C^{r}}(T_{g}^{\eps},T_{g}^{\eps'} )\|h\|_{2,q}.
\]

\end{lemma}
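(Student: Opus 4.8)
\textbf{Proof proposal for Lemma \ref{lem:SDifference}.}

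The plan is to mimic the proof of Lemma \ref{lem:lip}, but now interpolating at the level of first derivatives rather than at the level of the function itself, so that the $\|\cdot\|_{2,q}$-norm appears on the right. As in Lemma \ref{lem:lip}, I would first set up the charts: choose $|\eps|$ and the chart domains $V_i$ small enough that $T_{g_1}^\eps V_i$ and $T_{g_2}^\eps V_i$ both sit inside a common chart $\{U_j,\psi_j\}$, form the segment $T_t = tT_{g_1}^\eps+(1-t)T_{g_2}^\eps$, and invoke Lemma \ref{lem:interanosovcoup} so that $T_t$ is Anosov with cones uniform in $t$, $g_i$, $\eps$; all computations are understood in these charts.

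Then, to estimate $\|(\L_{T_{g_1}^\eps}-\L_{T_{g_2}^\eps})h\|_{1,q+1}$, I would test against $\mathrm{div}\,\vf$ with $(W,\vf)\in\Omega_{L,q+2,d}$ according to the definition of $\|\cdot\|^*_{1,q+1}$ (the $a\|\cdot\|_{0,q+1}$ part of $\|\cdot\|^-_{1,q+1}$ is already controlled by \eqref{eq:lip1w} in Lemma \ref{lem:lip} together with $\|h\|_{1,q}\le b^{-1}\|h\|_{2,q}$, after choosing $b$ appropriately). Writing $\int_M(\L_{T_{g_1}^\eps}-\L_{T_{g_2}^\eps})h\,\mathrm{div}\,\vf\,d\omega = \int_M h\,(\mathrm{div}\,\vf\circ T_{g_1}^\eps-\mathrm{div}\,\vf\circ T_{g_2}^\eps)\,d\omega$, I would insert the fundamental theorem of calculus along $T_t$ as in Lemma \ref{lem:lip}. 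The new point is that the resulting integrand, after the chain rule and the identity $\mathrm{div}(\vf\circ T_t)=(\mathrm{div}\,\vf)\circ T_t \cdot (\text{Jacobian factors})+\dots$, is still a sum of terms of the form $h\cdot(\text{first-order differential operator in }\vf)$; I would move one derivative off $h$ using integration by parts (the formulas in \eqref{eq:divsplit} are exactly what is needed), so that each term becomes $\int_M \partial_{x_j}h\cdot(\text{something built from }\vf, DT_t, D^2T_t, D(T_{g_2}^\eps-T_{g_1}^\eps))\,d\omega$ plus lower-order terms $\int_M h\cdot(\dots)$ that are controlled by $\|h\|_{1,q+1}\le$ (const)$\|h\|_{2,q}$. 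For the genuinely second-order pieces I would bound them by $\|h\|^*_{2,q}$ times the $\|\cdot\|^{T_t^{-1}W}_{q+2}$-norm of the test vector field, and then use Lemma \ref{lem:prop-norm} and \cite[Lemma 2.15]{BL21} — just as in Lemma \ref{lem:lip} — to see that the latter norm is bounded by $A_0\|\vf\|^W_{q+2}\cdot \mathrm{const}\cdot d_{C^{r}}(T_{g_1}^\eps,T_{g_2}^\eps)$, where one derivative of the distance estimate is ``spent'' to match the loss of derivative in passing from $\vf$ to $\mathrm{div}\,\vf$ and one more from the interpolation. Taking the supremum over $(W,\vf)$ gives the first inequality; density of $\C^2(M,\CC)$ in $\B^{1,q+1}$ finishes it.

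For the second inequality I would run the identical argument with $T_t = tT_g^\eps+(1-t)T_g^{\eps'}$, using the analogue of Lemma \ref{lem:interanosovcoup} for this segment (valid once $|\eps|,|\eps'|<\eps^*$ as in Lemma \ref{lem:anosovcoup}), with $d_{C^r}(T_g^\eps,T_g^{\eps'})$ in place of $d_{C^r}(T_{g_1}^\eps,T_{g_2}^\eps)$. The main obstacle I anticipate is bookkeeping: after inserting the projectors $\pi^u,\pi^s$ (as in the proof of Lemma \ref{lem:SLY}) and integrating by parts, one must carefully check that every term in which the ``extra'' derivative lands on $\vf$ rather than on $h$ still carries a full factor of $d_{C^r}(T_{g_1}^\eps,T_{g_2}^\eps)$ and does not secretly require a $C^{r+1}$-estimate on the maps — i.e. that the loss of one derivative in the test function is exactly absorbed by the regularity budget $r>3$ already assumed, with nothing worse than $d_{C^{q+2}}\le d_{C^r}$ appearing. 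Once that accounting is in place, the estimates are routine and structurally parallel to Lemma \ref{lem:lip} and Lemma \ref{lem:SLY}.
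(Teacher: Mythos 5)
Your proposal is correct and follows essentially the same route as the paper's proof: split off the weak part via Lemma \ref{lem:lip} (paying $\|h\|_{1,q}\le b^{-1}\|h\|_{2,q}$), transport the divergence through the map using the identities \eqref{eq:divsplit}, interpolate along $T_t$, and organize the result so that the top-order piece has the form $\sum_j\int\partial_{x_j}h\,\divv(\cdot)$ — matching the definition of $\|h\|^*_{2,q}$ — with a test field carrying a factor of $d_{C^r}(T_{g_1}^\eps,T_{g_2}^\eps)$, while the remaining terms are controlled by $\|h\|_{1,q+1}$ together with Lemma \ref{lem:lip} and Corollary \ref{cor:lip}. The only superfluous element is your appeal to the projectors $\pi^u,\pi^s$: those are needed in Lemma \ref{lem:SLY} to extract the contraction factor $\lambda^{-2n}$, but no contraction is required here, and the paper's proof of this lemma does not use them.
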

\begin{proof}
Recall the definition of $T_t$ in the proof of Lemma \ref{lem:lip}. The calculation below is understood in the charts as explained at the beginning of the proof of Lemma \ref{lem:lip}. Since
 $$\|(\L_{T_{g_1}^{\eps}}-\L_{T_{g_2}^{\eps}})h\|_{1,q+1}= a\|(\L_{T_{g_1}^{\eps}}-\L_{T_{g_2}^{\eps}})h\|_{0,q+1}+\|(\L_{T_{g_1}^{\eps}}-\L_{T_{g_2}^{\eps}})h\|^*_{1,q+1},$$
it is enough to estimate the latter since the former is covered by Lemma \ref{lem:lip}. We have
\begin{equation}\label{eq:I&II}
\begin{split}
&\int_M\left( \L_{T_{g_1}^{\eps}}-\L_{T_{g_2}^{\eps}} \right)h \, \divv \vf\\
&=\int_M h\divv \left(D(T_{g_1}^{\eps})^{-1} \cdot \vf\circ T_{g_1}^{\eps}\right)-\int_M h\divv \left(D(T_{g_2}^{\eps})^{-1}\cdot \vf\circ T_{g_2}^{\eps}\right)\\
&+\int_Mh\sum_{l,k=1}^d \partial_{l} \left[D(T_{g_2}^{\eps})^{-1}\right]_{lk}\cdot \vf_{k}\circ T_{g_2}^{\eps}-\int_Mh\sum_{l,k=1}^d \partial_{l} \left[D(T_{g_1}^{\eps})^{-1}\right]_{lk}\cdot \vf_{k}\circ T_{g_1}^{\eps}\\
&:= (I) +(II).
\end{split}
\end{equation}
We first give a bound on $(I)$. 
\begin{equation}
\begin{split}
&\int_M h\divv \left(D(T_{g_1}^{\eps})^{-1} \cdot \vf\circ T_{g_1}^{\eps}\right)-\int_M h\divv \left(D(T_{g_2}^{\eps})^{-1}\cdot \vf\circ T_{g_2}^{\eps}\right)\\
&=\int_M h\divv \left([D(T_{g_1}^{\eps})^{-1}\circ(T_{g_1}^{\eps})^{-1} \cdot \vf]\circ T_{g_1}^{\eps}-[D(T_{g_2}^{\eps})^{-1}\circ(T_{g_2}^{\eps})^{-1} \cdot \vf]\circ T_{g_2}^{\eps}\right)\\
&=\int_M h\divv \left([D(T_{g_1}^{\eps})^{-1}\circ(T_{g_1}^{\eps})^{-1} \cdot \vf]\circ T_{g_1}^{\eps}-[D(T_{g_1}^{\eps})^{-1}\circ(T_{g_1}^{\eps})^{-1} \cdot \vf]\circ T_{g_2}^{\eps}\right)\\
&+\int_M h\divv \left(\left[D(T_{g_1}^{\eps})^{-1}\circ(T_{g_1}^{\eps})^{-1} -D(T_{g_2}^{\eps})^{-1}\circ(T_{g_2}^{\eps})^{-1}\right]\cdot\vf\circ T_{g_2}^{\eps}\right)\\
&:=(I.I)+(I.II).
\end{split}
\end{equation}

We start with $(I.I)$. Let $D(T_{g_1}^{\eps})^{-1}\circ(T_{g_1}^{\eps})^{-1}\cdot\vf=\hat\vf$  and $T_t = t \cdot T_{g_1}^{\eps}+(1-t)\cdot T_{g_2}^{\eps}, t \in [0,1]$. We have
\begin{equation}
\begin{split}
(I.I)&=-\int_M \nabla h \cdot \left[\hat\vf\circ T_{g_1}^{\eps}-\hat\vf\circ T_{g_2}^{\eps}\right]\\
&=-\sum_{i=1}^d\int_M \partial_{x_i}  h \int_0^1 \nabla \hat \vf_i \circ T_t (T_{g_2}^{\eps}-T_{g_1}^{\eps})dt d\omega \\
&= -\sum_{i=1}^d\int_0^1 \int_M  \partial_{x_i} h \divv (\hat \vf_i \circ T_t ( T_{g_2}^{\eps}-T_{g_1}^{\eps})) d\omega dt  \\
& +\sum_{i=1}^d\int_0^1 \int_M  \partial_{x_i}  h  \hat\vf_i\circ T_t \tr D(T_{g_2}^{\eps}-T_{g_1}^{\eps})) d\omega dt \\
&= -\sum_{i=1}^d\int_0^1 \int_M  \partial_{x_i} h \divv (\hat \vf_i \circ T_t ( T_{g_2}^{\eps}-T_{g_1}^{\eps})) d\omega dt \\
&- \int_0^1\int_M  h \divv\left( \hat\vf\circ T_t \tr D(T_{g_2}^{\eps}-T_{g_1}^{\eps}) \right) d\omega dt \\
&\leq \|h\|_{2,q}\max_i\int_0^1 \|\hat\vf_i\circ T_t (T_{g_2}^{\eps}-T_{g_1}^{\eps})\|_{q+2}^{T_t^{-1}W}dt \\
&+ \|h\|_{1,q+1}\int_0^1 \|\hat\vf\circ T_t \tr D(T_{g_2}^{\eps}-T_{g_1}^{\eps})\|_{q+2}^{T_t^{-1}W}dt
\end{split}
\end{equation}
Using what we obtained in the course of the proof of Lemma \ref{lem:lip}, we get
\begin{equation} \label{eq:boundI.I}
(I.I) \leq C d_{C^{q+2}}(T_{g_1}^{\eps},T_{g_2}^{\eps} )\|h\|_{2,q}.
\end{equation}

As for $(I.II)$, using Lemma \ref{lem:lip1}, we have
\begin{equation}
\begin{split}
&\|\left[D(T_{g_1}^{\eps})^{-1}\circ(T_{g_1}^{\eps})^{-1} -D(T_{g_2}^{\eps})^{-1}\circ(T_{g_2}^{\eps})^{-1} \right]\cdot\vf\circ T_{g_2}^{\eps}\|_{q+1}^W\\
&\le\|\left[D(T_{g_1}^{\eps})^{-1}\circ(T_{g_1}^{\eps})^{-1} -D(T_{g_2}^{\eps})^{-1}\circ(T_{g_2}^{\eps})^{-1}\right]\|_{\C^{q+1}}\cdot\|\vf\circ T_{g_2}^{\eps}\|_{q+1}^W\\
&\le Cd_{C^r}(T^\eps_{g_1},T^\eps_{g_2}) \|\vf\|_{q+1}^W.
\end{split}
\end{equation}
This implies 
\begin{equation} \label{eq:boundI.II} 
(I.II) \leq  C d_{C^{r}}(T_{g_1}^{\eps},T_{g_2}^{\eps} )\|h\|_{1,q+1}.
\end{equation}
Combining \eqref{eq:boundI.I} and \eqref{eq:boundI.II} we obtain
\begin{equation} \label{eq:boundI}
(I) \leq C d_{C^r}(T^\eps_{g_1},T^\eps_{g_2})\|h\|_{2,q}.
\end{equation}
We now bound $(II)$.
\begin{equation}\label{eq:II}
\begin{split}
(II)&= \sum_{l,k=1}^d\int_Mh \partial_{l} \left[D(T_{g_2}^{\eps})^{-1}\right]_{lk}\left(\vf_k\circ T_{g_2}^{\eps}-\vf_k\circ T_{g_1}^{\eps}\right)\\
&+ \sum_{l,k=1}^d \int_Mh\left(\partial_{l} \left[D(T_{g_2}^{\eps})^{-1}\right]_{lk} - \partial_{l} \left[D(T_{g_1}^{\eps})^{-1}\right]_{lk} \right)\vf_k\circ T_{g_1}^{\eps}\\
&:= (II.I) + (II.II).
\end{split}
\end{equation}
For $(II.I)$, by Lemma \ref{lem:lip}, we have
\begin{align}
(II.I)&= \sum_{l,k=1}^d \int_M (\L_{T^\eps_{g_2}}-\L_{T^\eps_{g_1}}) \left(h\partial_{l} \left[D(T_{g_2}^{\eps})^{-1}\right]_{lk} \right)\cdot\vf_k \nonumber\\
&\le C\left\|(\L_{T^\eps_{g_2}}-\L_{T^\eps_{g_1}}) \left(h\partial_{l}\left[D(T_{g_2}^{\eps})^{-1}\right]_{lk} \right)\right\|_{0,q+1}  \label{eq:boundII.I}\\
&\le C d_{C^r}(T^\eps_{g_1},T^\eps_{g_2})\|h\|_{1,q} \nonumber. 
\end{align}
For $(II.II)$, by Corollary \ref{cor:lip} we have 
\begin{equation} \label{eq:boundII.II}
\begin{split}
&\sum_{l,k=1}^d \int_M\L_{T^\eps_{g_1}}h\left(\partial_{l} \left[D(T_{g_2}^{\eps})^{-1}\right]_{lk} \circ (T_{g_1}^{\eps})^{-1} - \partial_{l} \left[D(T_{g_1}^{\eps})^{-1}\right]_{lk}\circ ( T_{g_1}^{\eps})^{-1} \right)\vf_k \\
&\leq \|\L_{T^\eps_{g_1}}h\|_{0,q+1}\sum_{l,k=1}^d\left\|\partial_{l} \left[D(T_{g_2}^{\eps})^{-1}\right]_{lk} \circ (T_{g_1}^{\eps})^{-1} - \partial_{l} \left[D(T_{g_1}^{\eps})^{-1}\right]_{lk}\circ ( T_{g_1}^{\eps})^{-1}\right\|_{\C^{q+1}} \\
&\leq ACd_{\C^r}(T_{g_1}^{\eps}, T_{g_2}^{\eps})\cdot \|h\|_{0,q+1}.
\end{split}
\end{equation}
Using \eqref{eq:boundI}, \eqref{eq:boundII.I} and \eqref{eq:boundII.II} completes the proof of the first inequality of the lemma. The proof for the second part of the lemma follows similarly.
\end{proof}
\begin{corollary} \label{cor:StrongStatStab}
According to \eqref{eq:CouplingAssum1}, we can write
\begin{equation} \label{eq:StrongStatStab}
	\|(\L_{T_{g_1}^{\eps}}-\L_{T_{g_2}^{\eps}})h\|_{1,q+1} \leq  C|\varepsilon| \|g_1-g_2\|_{1,q+1} \|h\|_{2,q},
\end{equation}
and similarly, according to \eqref{eq:CouplingAssum2}
\begin{equation} \label{eq:StrongStatStab1}
	\|(\L_{T_{g}^{\eps}}-\L_{T_{g}^{\eps'}})h\|_{1,q+1} \leq  C|\varepsilon-\varepsilon '| \|g\|_{1,q+1}\|h\|_{2,q}.
\end{equation}
Furthermore,
\begin{align}
&\|(\L_{T_{g_N}^{\eps}}\dots \L_{T_{g_1}^{\eps}}-\L_{T_{f_N}^{\eps}}\dots \L_{T_{f_1}^{\eps}})h\|_{1,q+1} \nonumber\\
& \leq \sum_{i=1}^{N}\|\L_{T_{g_N}^{\eps}}\dots \L_{T_{g_{i+1}}^{\eps}}(\L_{T_{g_i}^{\eps}}- \L_{T_{f_i}^{\eps}})\L_{T_{f_{i-1}}^{\eps}}\dots\L_{T_{f_1}^{\eps}}h\|_{1,q+1} \nonumber\\
& \leq C|\eps|\sum_{i=1}^N \theta^{N-i} \|g_i-f_i\|_{1,q+1} \|h\|_{2,q}. \label{eq:StrongStatStabIter}
\end{align}
where $\theta$ is the contraction factor from Lemma \ref{lem:Contraction}. Similarly,
\begin{align}
\|(\L_{T_{g_N}^{\eps}}\dots \L_{T_{g_1}^{\eps}}-\L_{T_{g_N}^{\eps'}}\dots \L_{T_{g_1}^{\eps'}})h\|_{1,q+1} \leq C \sum_{i=1}^N \theta^{N-i}  \|g_i\|_{1,q+1}|\eps-\eps'| \|h\|_{2,q}. \label{eq:StrongStatStabIterEps}
\end{align}
\end{corollary}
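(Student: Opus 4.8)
The plan is to obtain the single-step bounds \eqref{eq:StrongStatStab} and \eqref{eq:StrongStatStab1} directly from Lemma \ref{lem:SDifference} and the coupling assumptions, in exact analogy with Corollary \ref{cor:statstab}, and then to bootstrap the iterated estimates \eqref{eq:StrongStatStabIter} and \eqref{eq:StrongStatStabIterEps} by a telescoping argument in which the memory-loss Lemma \ref{lem:Contraction} absorbs the long string of transfer operators standing to the left of the single perturbed factor.

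For the first pair: since $T_g^\eps=T\circ\Phi_g^\eps$ one has $d_{\C^r}(T_{g_1}^\eps,T_{g_2}^\eps)\le C\,d_{\C^r}(\Phi_{g_1}^\eps,\Phi_{g_2}^\eps)$ with $C$ depending on $T$, so inserting \eqref{eq:CouplingAssum1} into the first inequality of Lemma \ref{lem:SDifference} and using $\|g_1-g_2\|_{0,q}\le C\|g_1-g_2\|_{1,q+1}$ gives \eqref{eq:StrongStatStab}; inserting \eqref{eq:CouplingAssum2} into the second inequality of Lemma \ref{lem:SDifference} gives \eqref{eq:StrongStatStab1}. This is routine.

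For \eqref{eq:StrongStatStabIter} I would start from the telescoping identity already displayed in the statement, so that it is enough to bound, for each $1\le i\le N$, the summand $\L_{T_{g_N}^\eps}\cdots\L_{T_{g_{i+1}}^\eps}\,w_i$ with $w_i:=(\L_{T_{g_i}^\eps}-\L_{T_{f_i}^\eps})\L_{T_{f_{i-1}}^\eps}\cdots\L_{T_{f_1}^\eps}h$. Two facts carry the argument. First, every $\L_{T_g^\eps}$ preserves mass, hence $w_i(1)=0$; combined with Lemma \ref{lem:Contraction} (at smoothness level $q+1$, admissible because $q+1\le r-2$) this yields $\|\L_{T_{g_N}^\eps}\cdots\L_{T_{g_{i+1}}^\eps}w_i\|_{1,q+1}\le C\theta^{N-i}\|w_i\|_{1,q+1}$, with the convention that for $i=N$ the left string is empty and $\theta^0=1$. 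Second, the iterated strong Lasota--Yorke inequality of Corollary \ref{cor:SLYconsist} gives $\|\L_{T_{f_{i-1}}^\eps}\cdots\L_{T_{f_1}^\eps}h\|_{2,q}\le A_1\|h\|_{2,q}+A_2\|h\|_{1,q}\le C\|h\|_{2,q}$ uniformly in $i$, and feeding this into \eqref{eq:StrongStatStab} produces $\|w_i\|_{1,q+1}\le C|\eps|\,\|g_i-f_i\|_{1,q+1}\,\|h\|_{2,q}$. Summing the resulting $C|\eps|\theta^{N-i}\|g_i-f_i\|_{1,q+1}\|h\|_{2,q}$ over $i$ gives \eqref{eq:StrongStatStabIter}. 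The estimate \eqref{eq:StrongStatStabIterEps} is obtained by the same scheme after telescoping in the parameter instead, $\L_{T_{g_N}^\eps}\cdots\L_{T_{g_1}^\eps}-\L_{T_{g_N}^{\eps'}}\cdots\L_{T_{g_1}^{\eps'}}=\sum_{i=1}^N\L_{T_{g_N}^\eps}\cdots\L_{T_{g_{i+1}}^\eps}(\L_{T_{g_i}^\eps}-\L_{T_{g_i}^{\eps'}})\L_{T_{g_{i-1}}^{\eps'}}\cdots\L_{T_{g_1}^{\eps'}}$, bounding the middle factor by \eqref{eq:StrongStatStab1} together with Corollary \ref{cor:SLYconsist} applied with $\eps'$, and contracting the left string via Lemma \ref{lem:Contraction}.

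Since all the analytic content is already packed into Lemmas \ref{lem:SDifference} and \ref{lem:Contraction} and Corollary \ref{cor:SLYconsist}, I do not expect a genuine obstacle here, only two bookkeeping points that must not be skipped: the \emph{difference} operator $\L_{T_{g_i}^\eps}-\L_{T_{f_i}^\eps}$ --- not a single transfer operator --- has to sit in the middle of each telescoping term, precisely so that what the left string acts on has zero average and Lemma \ref{lem:Contraction} supplies the decaying factor $\theta^{N-i}$; and one must invoke the \emph{strong}-norm iterated bound of Corollary \ref{cor:SLYconsist} rather than the $\|\cdot\|_{1,q}$ one, because \eqref{eq:StrongStatStab} charges a full $\|\cdot\|_{2,q}$ and carries no smallness in the number of iterates, so the middle term $\|\L_{T_{f_{i-1}}^\eps}\cdots\L_{T_{f_1}^\eps}h\|_{2,q}$ must be controlled in $\B^{2,q}$.
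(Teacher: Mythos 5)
Your argument is correct and is exactly the route the paper intends: the single-step bounds come from Lemma \ref{lem:SDifference} combined with \eqref{eq:CouplingAssum1}--\eqref{eq:CouplingAssum2}, and the iterated bounds follow from the displayed telescoping, with Lemma \ref{lem:Contraction} (applied at level $q+1$ to the zero-mean middle differences) supplying the factor $\theta^{N-i}$ and Corollary \ref{cor:SLYconsist} controlling $\|\L_{T_{f_{i-1}}^{\eps}}\cdots\L_{T_{f_1}^{\eps}}h\|_{2,q}$ uniformly in $i$. The one step you assert that does not actually follow from the norm definitions is $\|g_1-g_2\|_{0,q}\le C\|g_1-g_2\|_{1,q+1}$ (the index shift goes the wrong way, since $\Omega_{L,q+1,1}\subset\Omega_{L,q,1}$ gives $\|\cdot\|_{0,q}\ge\|\cdot\|_{0,q+1}$), but this imprecision is already present in the paper's own statement of \eqref{eq:StrongStatStab} and is harmless in every place the corollary is invoked, where one anyway uses $\|g_i-f_i\|_{0,q}\le a^{-1}\|g_i-f_i\|_{1,q}$.
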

\subsection{ Fixed point uniqueness and exponential convergence in $\B(K_1,K_2,q)$}\label{sec:unique}\ \\
Recall the definition of the set $\B(K_1,K_2,q)$ (see  \eqref{eq:bk2}).
\begin{lemma}\label{lem:Sinvariance}
There exists $N \in \NN$, $K_{1,\min} \geq K_{\min}$, $K_{2,\min} \ge 0$ and $\varepsilon_5 > 0$, for which $\L_{\eps}^n(\B(K_1,K_2,q))\subset B(K_1,K_2,q)$ and $\L_{\eps}^n|_{\B(K_1,K_2,q)}$ is a strict contraction, with respect to the $\|\cdot\|_{1,q+1}$ norm, for all $K_i \geq K_{i, \min}$,  $|\eps| < \eps_5$, $q \in \{1,\dots,r-3\}$ and $n\geq N$.
\end{lemma}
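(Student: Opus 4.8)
The plan is to run, on the stronger space $\B^{2,q}$, the same two-step argument that was used to produce $\overline{\B}_K$: first establish eventual invariance of the set $\B_{K_1,K_2}$ under $\L_\eps$ by a Lasota--Yorke/bootstrap argument, then upgrade the memory-loss estimate of Lemma \ref{lem:Contraction} to a genuine contraction on differences $h_\eps^{(1)}-h_\eps^{(2)}$ of elements of $\B_{K_1,K_2}$ measured in $\|\cdot\|_{1,q}$. For the invariance part I would start from Corollary \ref{cor:SLYconsist}, which gives $\|\L_\eps^n(h)\|_{2,q}\le A_1\theta^n\|h\|_{2,q}+A_2\|h\|_{1,q}$. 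Since any $h\in\B_{K_1,K_2}$ also lies in $\B_{K_1}$, Proposition \ref{lem:invariance} (more precisely \eqref{eq:invariant}) guarantees that for $n\ge N$ one has $\|\L_\eps^n(h)\|_{1,q}\le\beta K_1+B\le K_1$ once $K_1\ge K_{\min}$. Feeding this uniform bound on $\|\cdot\|_{1,q}$ into the Corollary \ref{cor:SLYconsist} estimate yields, for $n\ge N$,
\[
\|\L_\eps^n(h)\|_{2,q}\le A_1\theta^n K_2 + A_2 K_1 \le \beta_2 K_2 + A_2 K_1,
\]
with $\beta_2=A_1\theta^N\in(0,1)$ after enlarging $N$; choosing $K_{2,\min}:=\frac{A_2 K_1}{1-\beta_2}$ (with $K_1=K_{1,\min}$ fixed as in Proposition \ref{lem:invariance}) gives $\L_\eps^n(\B_{K_1,K_2})\subset \B_{K_1,K_2}$, and in particular the stated $\L_\eps^n(\B_{K_1,K_2})\subset \B_{K_2}$, for all $K_2\ge K_{2,\min}$ and all $n\ge N$. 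One must also check positivity and mass normalization are preserved — but those are exactly as in the proof of Proposition \ref{lem:invariance}.

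For the contraction part, let $h_1,h_2\in\B_{K_1,K_2}$ and write $h_i^{(n)}=\L_\eps^n(h_i)$. Using $\L_\eps^n(h_i)=\L_{T^\eps_{h_i^{(n-1)}}\circ\cdots\circ T^\eps_{h_i^{(0)}}}h_i$ I would telescope the difference $\L_\eps^n(h_1)-\L_\eps^n(h_2)$ into two contributions: the ``linear'' part $\L_{T^\eps_{h_1^{(n-1)}}\circ\cdots\circ T^\eps_{h_1^{(0)}}}(h_1-h_2)$, which has zero integral and so is controlled by $C\theta^n\|h_1-h_2\|_{1,q}$ via Lemma \ref{lem:Contraction}, and the ``parameter-drift'' part coming from replacing the sequence $h_1^{(j)}$ by $h_2^{(j)}$ one factor at a time, estimated by the iterated stability bound \eqref{eq:StrongStatStabIter} of Corollary \ref{cor:StrongStatStab}:
\[
\Big\|\big(\L_{T^\eps_{h_1^{(N-1)}}\circ\cdots}-\L_{T^\eps_{h_2^{(N-1)}}\circ\cdots}\big)h_2\Big\|_{1,q+1}
\le C|\eps|\sum_{i=1}^N\theta^{N-i}\|h_1^{(i-1)}-h_2^{(i-1)}\|_{1,q+1}\,\|h_2\|_{2,q}.
\]
Since $\|h_2\|_{2,q}\le K_2$ and, by the invariance already proved, each $\|h_1^{(i-1)}-h_2^{(i-1)}\|_{1,q+1}$ is bounded (uniformly, using $\|h_i^{(j)}\|_{1,q}\le K_1$ and the relation between $\|\cdot\|_{1,q+1}$ and $\|\cdot\|_{1,q},\|\cdot\|_{0,q+2}$ plus Proposition \ref{prop:LY-Seq}), one gets
\[
\|\L_\eps^N(h_1)-\L_\eps^N(h_2)\|_{1,q}\le \big(C\theta^N + C(N,K_1,K_2)|\eps|\big)\|h_1-h_2\|_{1,q}.
\]
Fixing $N$ so that $C\theta^N\le\tfrac14$ and then choosing $\eps_5$ small enough that $C(N,K_1,K_2)|\eps|\le\tfrac14$ makes $\L_\eps^N$ a strict contraction in $\|\cdot\|_{1,q}$ on $\B_{K_1,K_2}$, which is the assertion.

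The main obstacle I anticipate is bookkeeping the norm shifts: the stability estimates in Corollary \ref{cor:StrongStatStab} naturally land in $\|\cdot\|_{1,q+1}$ (one derivative stronger than the target $\|\cdot\|_{1,q}$), and one must make sure the iterated differences $\|h_1^{(i-1)}-h_2^{(i-1)}\|_{1,q+1}$ stay uniformly bounded along the orbit despite the drop $q+1\to q$ in the contraction statement — this is exactly the kind of ``loss of derivatives'' issue the layered spaces $\B^{0,q}\supset\B^{1,q}\supset\B^{2,q}$ are designed to absorb, and it is handled by combining the $\|\cdot\|_{2,q}$-bound $K_2$ (which dominates $\|\cdot\|_{1,q+1}$ up to the Lasota--Yorke constants) with the invariance from the first part. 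A secondary point to be careful about is that Lemma \ref{lem:Contraction} requires $h(1)=0$, so it applies to $h_1-h_2$ but the drift term must genuinely be routed through Corollary \ref{cor:StrongStatStab} rather than Lemma \ref{lem:Contraction}; once the two mechanisms are kept separate the estimate closes.
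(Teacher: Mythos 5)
Your invariance argument is the paper's: combine the eventual $\B_{K_1}$-invariance from Proposition \ref{lem:invariance} with the $\B^{2,q}$ Lasota--Yorke bound of Corollary \ref{cor:SLYconsist} and choose $K_{2,\min}=A_2K_1/(1-\beta)$. Your contraction step also uses the right decomposition (linear part via Lemma \ref{lem:Contraction}, drift part via \eqref{eq:StrongStatStabIter}), but it has a genuine gap at the final estimate. You bound the iterated differences $\|h_1^{(i-1)}-h_2^{(i-1)}\|_{1,q+1}$ appearing in the drift sum only by a \emph{uniform constant} (of order $K_1$). That turns the drift contribution into an additive error of size $C(N,K_1,K_2)|\eps|$ that does not scale with $\|h_1-h_2\|_{1,q}$, so the inequality you write,
\[
\|\L_\eps^N(h_1)-\L_\eps^N(h_2)\|_{1,q}\le \bigl(C\theta^N + C(N,K_1,K_2)|\eps|\bigr)\|h_1-h_2\|_{1,q},
\]
does not follow from what you proved; what follows is only $\le C\theta^N\|h_1-h_2\|_{1,q}+C(N,K_1,K_2)|\eps|$, which shows that orbits eventually lie within $O(|\eps|)$ of one another but is not a strict contraction and cannot yield uniqueness of the fixed point.

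To close the argument you must control the drift term \emph{multiplicatively}, i.e.\ show $\|h_1^{(k-1)}-h_2^{(k-1)}\|_{1,q+1}\le C\gamma^{k-1}\|h_1-h_2\|_{1,q}$ (using $\|\cdot\|_{1,q+1}\le\|\cdot\|_{1,q}$). This is exactly what the paper does by induction on $n$: assuming $\|\L_\eps^k h_1-\L_\eps^k h_2\|_{1,q}\le C\gamma^k\|h_1-h_2\|_{1,q}$ for $k<n$ with $\gamma\in(\theta,1)$, the drift sum becomes $K_2|\eps|\sum_{k=1}^n\theta^{n-k}\gamma^k\|h_1-h_2\|_{1,q}\le K_2|\eps|(1-\theta/\gamma)^{-1}\gamma^n\|h_1-h_2\|_{1,q}$, and for $|\eps|$ small the bound propagates to step $n$. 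Equivalently, set $a_n=\|\L_\eps^n h_1-\L_\eps^n h_2\|_{1,q}$, derive the recursive inequality $a_n\le C_1\theta^n a_0+K_2|\eps|\sum_{k=1}^n\theta^{n-k}a_{k-1}$, and run a discrete Gronwall argument. Note that you cannot avoid this bootstrap by telescoping the iterated differences again, since that reproduces the same drift term one level down. The rest of your write-up --- the zero-integral observation for $h_1-h_2$, the role of the $\|\cdot\|_{2,q}\le K_2$ bound, and the separation of the two mechanisms --- is correct and matches the paper.
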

\begin{proof} 
Let $h\in \B(K_1,K_2,q)$, then by  Proposition \ref{lem:invariance} we have $\L^n_\eps(h)\subset \B(K_1,q+1)$. Next, by Corollary \ref{cor:SLYconsist}, we have
\begin{equation}\label{eq:contractw}
\|\L_{\eps}^n (h)\|_{2,q} \leq A_1\theta^n K_2 + A_2 K_1.
\end{equation}
Choosing $N$ large enough, such that $A_1\theta^N=\beta\in(0,1)$, and $K_2$ such that $K_2 \geq K_{2, \min}:=\frac{A_2K}{1-\beta}$, we have the wanted invariance. 

Next, let $h_1,h_2\in \B(K_1,K_2,q)$. Then for $n\geq N$ we have $\L_\eps^n (h_1)=\L_{T^\eps_{h^{n-1}_1}}\dots\L_{T^\eps_{h_1}}h_1$ and $\L_\eps^n (h_2)=\L_{T^\eps_{h^{n-1}_2}}\dots\L_{T^\eps_{h_2}}h_2$ for some $h_1,\dots,h_1^{n-1}$ and $h_2,\dots,h_2^{n-1}$. Choose $\gamma \in (\theta,1)$ and $C > \max\{1,C_1\}$. We prove by induction that $\|\L_\eps^n (h_1)-\L_\eps^n (h_2)\|_{1,q+1} \leq C\gamma^n \|h_1-h_2\|_{1,q+1}$.
\begin{align*}
&\|\L_\eps^n  (h_1)-\L_\eps^n (h_2)\|_{1,q+1}=\|\L_{T^\eps_{h^{n-1}_1}}\dots\L_{T^\eps_{h_1}} h_1-\L_{T^\eps_{h^{n-1}_2}}\dots\L_{T^\eps_{h_2}} h_2\|_{1,q+1} \\
&\leq \|(\L_{T^\eps_{h^{n-1}_1}}\dots\L_{T^\eps_{h_1}}-\L_{T^\eps_{h^{n-1}_2}}\dots\L_{T^\eps_{h_2}})h_1\|_{1,q+1}\\
&\hskip 2cm+\|\L_{T^\eps_{h^{n-1}_2}}\dots\L_{T^\eps_{h_2}}(h_1-h_2)\|_{1,q+1} \\
&\leq K_2|\eps|\sum_{k=1}^n \theta^{n-k}\gamma^k \|h_1-h_2\|_{1,q+1}+C_1\theta^{n}\|h_1-h_2\|_{1,q+1},
\end{align*}
where for the first term we used \eqref{eq:StrongStatStabIter} from Corollary \ref{cor:StrongStatStab}, $\|h_1\|_{2,q} \leq K_2$ and the induction hypothesis, while for the second term we used Lemma \ref{lem:Contraction}.

Accordingly,
\begin{equation}\label{eq:ufff}
\begin{split}
\|\L_\eps^n  (h_1)-\L_\eps^n (h_2)\|_{1,q+1}\leq &K_2|\eps|\gamma^n \sum_{k=1}^n (\theta/\gamma)^k \|h_1-h_2\|_{1,q+1}+C_1\theta^{n}
\|h_1-h_2\|_{1,q+1} \\
&\leq (K_2|\eps|(1-\theta/\gamma)^{-1}+C_1)\gamma^{n}\|h_1-h_2\|_{1,q+1} \\
&\leq C\gamma^{n}\|h_1-h_2\|_{1,q+1}
\end{split}
\end{equation}
for $|\eps|$ sufficiently small.

\end{proof}

\begin{proof}[\bf Proof of Theorem \ref{thm:unifix}]
We are now in a position to show that $\L_\eps$ has a unique fixed point in $\B(K_1,K_2,q)$, for $K_i\geq K_{i,\min}$. By Lemma \ref{lem:Sinvariance} and the Banach fixed point theorem it follows that, for $N$ large enough, $\L_\eps^N$ has a unique fixed point in $\B(K_1,K_2,q)$, call it $h_\eps$. Then
\[
\L_\eps^{N}(\L_\eps (h_\eps))= \L_\eps(\L_\eps^{N} (h_\eps))=\L_\eps (h_\eps).
\]
Accordingly, also $\L_\eps h_\eps$ is a fixed point of $\L_\eps^{N}$. On the other hand, by equations \eqref{eq:contractw0},\eqref{eq:contractw} there exist constants $A,B,A_1,A_2$ such that $\L_\ve (h_\ve)\in\B(K_1',K_2',q)\supset \B(K_1,K_2,q)$, where $K_1'=AK_1+B$ and $K_2'=A_1 K_2+A_2 K_1$.  If $N$ has been chosen large enough, $\L_\eps^N$ must have a unique fixed point in $\B(K_1',K_2',q)$ as well, which must be $h_\eps$. It follows $\L_\eps (h_\eps)=h_\eps$. On the other hand, if $g_\eps\in \B(K_1,K_2,q)$ and $\L_\eps (g_\eps)=g_\eps$, then $\L_\eps^N (g_\eps)=g_\eps$ and so, by unicity again, $g_\eps=h_\eps$.


The proof of the first part of Theorem \ref{thm:unifix} is completed by \eqref{eq:ufff} that implies
\begin{equation}
\|\L_\eps^n(h) - h_\eps\|_{1,q+1} \leq C\gamma^n \|h-h_\eps\|_{1,q+1}
\end{equation}
for all $h \in \bar\B(K_1,K_2,q)$.

We now prove the second part of Theorem \ref{thm:unifix}. Let $h \in \bar\B(K_1,K_2,q)$. Then
\begin{align*}
\|\L_\eps^n(h) - h_{\eps'}\|_{1,q+1} &\leq \|\L_\eps^n(h) - \L_{\eps'}^n(h)\|_{1,q+1} + \|\L_{\eps'}^n(h) - h_{\eps'}\|_{1,q+1} \\
&\leq \|\L_\eps^n(h) - \L_{\eps'}^n(h)\|_{1,q+1} + C\theta^n\|h - h_{\eps'}\|_{1,q+1}.
\end{align*}
Choose $\lambda \in (0,1)$ and fix $n^*$ such that $C\theta^{n^*} < \lambda$. Then
\[
\|\L_\eps^{n^*}(h) - h_{\eps'}\|_{1,q+1} \leq \|\L_\eps^{n^*}(h) - \L_{\eps'}^{n^*}(h)\|_{1,q+1} + \lambda \|h - h_{\eps'}\|_{1,q+1}.
\]
According to Corollary \ref{cor:StrongStatStab}, for each  $n \in \mathbb{N}$ there exists $C(n) > 0$ such that if $h \in \bar\B(K_1,K_2,q)$,
\[
\|\L_\eps^n(h) - \L_{\eps'}^n(h)\|_{1,q+1} \leq C(n) |\eps-\eps'|.
\]
We thus have
\[
\|\L_\eps^{n^*}(h) - h_{\eps'}\|_{1,q+1} \leq C(n^*)|\eps-\eps'| + \lambda \|h - h_{\eps'}\|_{1,q+1}.
\]
Let $B(h,r) = \{g \in \B^{1,q}: \|g-h\|_{1,q} \leq r \}$. Set $r_0 = \frac{C(n^*)|\eps-\eps'|}{1-\lambda}$. Then $\L^{n^*}_{\eps}B(h_{\eps'},r_0) \subseteq B(h_{\eps'},r_0)$. Indeed, let $g$ be such that $\|g-h_{\eps'}\|_{1,q+1} \leq r_0$. Then
\[
\|\L^{n^*}_{\eps}(g)-h_{\eps'}\|_{1,q+1} \leq C(n^*)|\eps-\eps'| +\lambda r_0 = r_0.
\]
According to the main theorem, we have $\|\L^{kn^*}_{\eps}(h)-h_{\eps}\|_{1,q} \to 0$ for $h \in \overline{\B}(K_1,K_2,q)$. Note that $B(h_{\eps'},r_0) \cap \bar\B(K_1,K_2,q) \neq \emptyset$. This implies that $h_{\eps} \in B(h_{\eps'},r_0)$, i.e.
\[
\|h_{\eps}-h_{\eps'}\|_{1,q+1} \leq r_0 = \frac{C(n^*)|\eps-\eps'|}{1-\lambda} := K|\eps-\eps'|.
\]
\end{proof}

\subsection{Uniqueness of the physical measure}\ \\
We can now conclude with the proof of Theorem \ref{thm:main0}.
\begin{proof}[Proof of Theorem \ref{thm:main0}]
Let $h_\eps$ be a physical measure and let $h\in L^1$ such that $\L_\ve^n (h)$ converges weakly to $h_\eps$.
For each $\delta>0$ we can find $h_\delta\in \cC^\infty$ such that $\|h_\delta-h\|_{L^1}\leq \delta$. Then, setting $h_n=\L_\ve^n (h)$, we have
\[
\|\L_{T_{h_n}^\eps}\cdots \L_{T_h^\eps} h_\delta-\L_{T_{h_n}^\eps}\cdots \L_{T_h^\eps} h\|_{L^1}\leq C \delta.
\]
It follows that, given any sequence $\delta_n\to 0$, $\L_{T_{h_n}^\eps}\cdots \L_{T_h^\eps} h_{\delta_n}$ converges weakly to $h_\eps$.
Moreover,
\[
\|\L_{T_{h_n}^\eps}\cdots \L_{T_h^\eps}h_\delta\|_{1,q}\leq C\theta^n\|h_\delta\|_{1,q}+B\|h_\delta\|_{0,q}\leq
C_\delta\theta^n+B\|h_\delta\|_{TV}\leq C_\delta\theta^n+B,
\]
where $C_\delta\to\infty$ when $\delta\to 0$ and depends only on the fixed function $h$. We can thus choose a sequence $\delta_n$ that goes to zero so slowly that $C_{\delta_n}\theta^n\to 0$ when $n\to\infty$.
Thus, for $n$ large enough, $\|\L_{T_{h_n}^\eps}\cdots \L_{T_h^\eps} h_{\delta_n}\|_{1,q}\leq 2B\leq K_1$.
On the other hand, computing as above
\[
\|\L_{T_{h_n}^\eps}\cdots \L_{T_h^\eps} h_{\delta_n}\|_{2,q}\leq  K_2.
\]
The above implies that there exists a sequence $\{g_n\}\subset \B(K_1,K_2,q)\cap \C^r$ such that $g_n$ converges weakly to $h_\eps$.

Next, notice that $\L_{T_{h_\eps}^\eps}$, the transfer operator associated with the Anosov map $T_{h_\eps}^\eps$. Since $T_{h_\eps}^\eps$ is a small perturbation of $T$, which is transitive, by structural stability $T_{h_\eps}^\eps$ is transitive as well. It follows that $\L_{T_{h_\eps}^\eps}$, when acting on $\B^{1,q}$, has a spectral gap, see \cite{BL21}, and consequently $T_{h_\eps}^\eps$ has a unique invariant measure in $\B^{1,q}$, call it $g_*$.
Therefore there exists constants $C_*>0$ and $\nu\in(0,1)$ such that, for each $n\in\bN$,  we have
\[
\|\L_{T_{h_\eps}^\eps}^{m}g_n-g_*\|_{1,q+1}\leq C_*\nu^m.
\]
This implies that, for each $(W,\varphi)\in\Omega_{L,q,1}$, we have (recalling that $h_\eps=\L_\ve (h_\eps)=\L_{T_{h_\eps}^\eps}h_\eps$)
\[
\int (g_*-h_\eps)\vf=\int (\L_{T_{h_\eps}^\eps}^{m}g_n-\L_{T_{h_\eps}^\eps}^{m}h_\eps)\vf+\int ( \L_{T_{h_\eps}^\eps}^{m}g_n-g_*)\vf.
\]
From the above, it follows
\[
\left| \int (g_*-h_\eps)\vf-\int (g_n-h_\eps)\vf\circ (T_{h_\eps}^\eps)^{m}\right|\leq C_*\nu^m.
\]
Taking the limit for $n\to\infty$, since $g_n$ converges weakly to $h_\eps$ we have 
\[
\left| \int (g_*-h_\eps)\vf \right|\leq C_*\nu^m,
\]
and taking the limit $m\to \infty$ we have $g_*=h_\eps$ in $\B^{0,q}$, hence, recalling \cite[Lemma 2.12]{BL21}, they are equal as distributions in $(\C^q)'$. But since $g_*$ is a positive distribution they are both measures and hence they coincide as measures.  
It follows that $h_\eps=g_*\in \overline{\B}(K_1,K_2,q)$ and since the invariant measure in such a set is unique the theorem follows.
\end{proof}

\appendix

\section{Test functions and foliations}
\subsection{Test functions.}\ \\
 Let  $\param\geq 2$ be a parameter chosen as in \cite[Equation (3.11)]{BL21}. Denote by $\alpha$ the multi-index  $\alpha=(\alpha_1,\cdots, \alpha_d)$ with $\alpha_i\in \mathbb N\cup\{0\}$. Let $|\alpha|=\sum_{i=1}^d\alpha_i$ and $\partial^{\alpha}=\partial_{x_1}^{\alpha_1}\cdots\partial_{x_d}^{\alpha_d}$. We thus define the weighted norm in $\mathcal C^\rho(M,\mathcal M(m,n))$, where $\mathcal M(m,n)$ is the set of the $m\times n$ (possibly complex valued) matrices,  
\begin{equation}
\label{def of the C norm}
\begin{split}
&\|\varphi \|_{\mathcal{C}^0}=\sup_{x \in M}\sup_{i\in\{1, \dots,n\}}\sum_{j=1}^m|\varphi_{i,j}(x)| \\
&\|\varphi\|_{\mathcal{C}^{\rho}}=\sum_{k=0}^{\rho}\param^{\rho-k} \sup_{|\alpha|=k}\|\partial^{\alpha}\varphi\|_{\mathcal{C}^0},
\end{split}
\end{equation}
for some $\param\ge 2$. Note that the above definition implies
\begin{equation}\label{eq:norm-q-def}
\|\varphi\|_{\mathcal{C}^{\rho+1}}=\param^{\rho+1} \|\varphi\|_{\mathcal{C}^{0}}+\sup_{i}\|\partial_{x_i} \varphi\|_{\mathcal{C}^{\rho}}.
\end{equation}
 The next lemma is Lemma 2.9 of \cite{BL21}.
\begin{lemma}\label{lem:prop-norm}
For every $ \rho,n, m,s  \in\NN$, $ \psi\in\mathcal C^\rho(M,\mathcal M(m,n))$ and $\varphi \in \mathcal C^\rho(M,\mathcal M(m,s))$ we have 
\[
\|\varphi\psi\|_{\mathcal{C}^{\rho}}\le \|\varphi\|_{\mathcal{C}^{\rho}}\|\psi\|_{\mathcal{C}^{\rho}}.
\]
Moreover if $\varphi\in \mathcal C^\rho(M,\mathcal M(m,n))$ and $\psi\in\mathcal C^\rho(M,M)$, then
\[
\|\varphi\circ \psi\|_{\mathcal C^\rho}\le \sum_{k=0}^{\rho}\binom{\rho}{k} \param^{\rho-k}\|\varphi\|_{\mathcal C^{k}}\prod_{i=1}^{k}\|(D\psi)^t\|_{\mathcal C^{\rho-i}}. 
\]
\end{lemma}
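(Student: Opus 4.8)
The lemma bundles a product estimate and a composition (Faà di Bruno type) estimate for the weighted norm \eqref{def of the C norm}; the plan is to establish the product bound first and then deduce the composition bound from it by induction on $\rho$, with the recursive identity \eqref{eq:norm-q-def} furnishing the inductive mechanism.

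For $\|\varphi\psi\|_{\mathcal{C}^{\rho}}\le \|\varphi\|_{\mathcal{C}^{\rho}}\|\psi\|_{\mathcal{C}^{\rho}}$ I would apply the Leibniz rule entrywise, $\partial^{\alpha}(\varphi\psi)=\sum_{\beta\le\alpha}\binom{\alpha}{\beta}(\partial^{\beta}\varphi)(\partial^{\alpha-\beta}\psi)$, together with the fact that $\|\cdot\|_{\mathcal{C}^0}$ is an induced operator norm on matrices, hence submultiplicative on conformable products, to get $\|\partial^{\alpha}(\varphi\psi)\|_{\mathcal{C}^0}\le\sum_{\beta\le\alpha}\binom{\alpha}{\beta}\|\partial^{\beta}\varphi\|_{\mathcal{C}^0}\|\partial^{\alpha-\beta}\psi\|_{\mathcal{C}^0}$. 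Grouping the $\beta$'s by order and using the multi-index Vandermonde identity $\sum_{\beta\le\alpha,\,|\beta|=j}\binom{\alpha}{\beta}=\binom{|\alpha|}{j}$ collapses this to a sum over $j$ of $\binom{|\alpha|}{j}\big(\sup_{|\beta|=j}\|\partial^{\beta}\varphi\|_{\mathcal{C}^0}\big)\big(\sup_{|\gamma|=|\alpha|-j}\|\partial^{\gamma}\psi\|_{\mathcal{C}^0}\big)$. Attaching the weight $\param^{\rho-|\alpha|}$ and using $\param\ge 2$ in the form $\binom{|\alpha|}{j}\le 2^{|\alpha|}\le\param^{\rho}$ (valid since $|\alpha|\le\rho$), so that $\param^{\rho-|\alpha|}\binom{|\alpha|}{j}\le\param^{2\rho-|\alpha|}=\param^{\rho-j}\,\param^{\rho-(|\alpha|-j)}$, then summing over $|\alpha|$ and re-indexing the resulting double sum over the pair $(j,|\alpha|-j)$ as a product, gives exactly $\|\varphi\|_{\mathcal{C}^{\rho}}\|\psi\|_{\mathcal{C}^{\rho}}$. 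This step is routine once $\param\ge 2$ is invoked.

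For the composition bound I would induct on $\rho$. The base case $\rho=0$ is $\|\varphi\circ\psi\|_{\mathcal{C}^0}\le\|\varphi\|_{\mathcal{C}^0}$, which is the lone $k=0$ term on the right. For the step from $\rho$ to $\rho+1$, use \eqref{eq:norm-q-def}:
\[
\|\varphi\circ\psi\|_{\mathcal{C}^{\rho+1}}=\param^{\rho+1}\|\varphi\circ\psi\|_{\mathcal{C}^{0}}+\sup_i\|\partial_{x_i}(\varphi\circ\psi)\|_{\mathcal{C}^{\rho}} .
\]
The first summand is at most $\param^{\rho+1}\|\varphi\|_{\mathcal{C}^0}$, which is precisely the $k=0$ term of the target at order $\rho+1$. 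For the second, the chain rule gives $\partial_{x_i}(\varphi\circ\psi)=\sum_{l}((\partial_{x_l}\varphi)\circ\psi)\,\partial_{x_i}\psi_l$, which I would read as a matrix product of the block row $((\partial_{x_1}\varphi)\circ\psi\mid\dots\mid(\partial_{x_d}\varphi)\circ\psi)$ with a block column assembled from the entries $\partial_{x_i}\psi_l=((D\psi)^t)_{il}$. The crucial point is that, with the norm convention of \eqref{def of the C norm}, the $\mathcal{C}^{\rho}$-norm of such a block row is $\sup_l\|(\partial_{x_l}\varphi)\circ\psi\|_{\mathcal{C}^{\rho}}$ — a maximum over directions, not a sum — so no factor of $d$ is produced. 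Applying the product estimate, then the induction hypothesis to each $\partial_{x_l}\varphi$ at order $\rho$, and finally $\sup_l\|\partial_{x_l}\varphi\|_{\mathcal{C}^{k}}\le\|\varphi\|_{\mathcal{C}^{k+1}}$ (again \eqref{eq:norm-q-def}), the second summand is bounded by $\big(\sum_{k=0}^{\rho}\binom{\rho}{k}\param^{\rho-k}\|\varphi\|_{\mathcal{C}^{k+1}}\prod_{i=1}^{k}\|(D\psi)^t\|_{\mathcal{C}^{\rho-i}}\big)\,\|(D\psi)^t\|_{\mathcal{C}^{\rho}}$. Since $\|(D\psi)^t\|_{\mathcal{C}^{\rho}}\prod_{i=1}^{k}\|(D\psi)^t\|_{\mathcal{C}^{\rho-i}}=\prod_{i=1}^{k+1}\|(D\psi)^t\|_{\mathcal{C}^{(\rho+1)-i}}$, the substitution $k\mapsto k-1$ turns this into $\sum_{k=1}^{\rho+1}\binom{\rho}{k-1}\param^{\rho+1-k}\|\varphi\|_{\mathcal{C}^{k}}\prod_{i=1}^{k}\|(D\psi)^t\|_{\mathcal{C}^{(\rho+1)-i}}$, and combining with the $k=0$ contribution and the bound $\binom{\rho}{k-1}\le\binom{\rho+1}{k}$ delivers the target at level $\rho+1$.

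The only point requiring real care is keeping every constant equal to $1$ in the inductive step: the chain rule naively yields a sum of $d$ matrix products, and one must exploit the precise shape of the norm \eqref{def of the C norm} (so that the direction index is absorbed into the column index of the block-row gradient, converting a potential $\sum_l$ into a $\max_l$) together with \eqref{eq:norm-q-def} (to reabsorb derivatives of $\varphi$ into $\varphi$ at one higher order without a constant), in order to land on exactly $\binom{\rho+1}{k}\param^{\rho+1-k}$ with the descending chain of regularities $\rho,\rho-1,\dots,(\rho+1)-k$ on the $(D\psi)^t$ factors. Everything else — the Leibniz and chain-rule computations, the Vandermonde and binomial identities, the re-indexing of sums — is bookkeeping, and the role of $\param\ge 2$ is confined to neutralising the binomial coefficients in the product estimate.
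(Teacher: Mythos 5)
The paper does not prove this lemma; it is imported verbatim as Lemma~2.9 of \cite{BL21}, so there is no in-paper argument to compare against. Your proof is correct and is the natural one: for the product bound, Leibniz plus submultiplicativity of the induced $\mathcal{C}^0$ matrix norm, the Vandermonde collapse $\sum_{|\beta|=j,\beta\le\alpha}\binom{\alpha}{\beta}=\binom{|\alpha|}{j}$, and $\binom{k}{j}\le 2^{k}\le\param^{\rho}$ to absorb the binomials into the weights; for the composition bound, induction on $\rho$ through the recursion \eqref{eq:norm-q-def} and the chain rule read as a block matrix product, which reproduces exactly the coefficients $\binom{\rho}{k}\param^{\rho-k}$ and the descending regularities on $(D\psi)^t$. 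One small imprecision: the $\mathcal{C}^{\rho}$-norm of the block row $\bigl((\partial_{x_1}\varphi)\circ\psi\mid\dots\mid(\partial_{x_d}\varphi)\circ\psi\bigr)$ is not literally $\sup_l\|(\partial_{x_l}\varphi)\circ\psi\|_{\mathcal{C}^{\rho}}$ but rather $\sum_k\param^{\rho-k}\sup_l\sup_{|\alpha|=k}\|\partial^{\alpha}((\partial_{x_l}\varphi)\circ\psi)\|_{\mathcal{C}^0}$, i.e.\ the supremum over $l$ sits inside the sum over derivative orders; the clean way to finish is to apply the induction hypothesis directly to the block-row function $D\varphi$ (the statement allows arbitrary matrix dimensions) and then use $\|D\varphi\|_{\mathcal{C}^k}\le\|\varphi\|_{\mathcal{C}^{k+1}}$, which is an exact consequence of \eqref{def of the C norm}. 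This is the same mild abuse the paper itself commits in \eqref{eq:norm-q-def} and in the proof of Lemma \ref{lem:lip1}, and it does not affect the constants, so I would not count it as a gap.
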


\begin{lemma} \label{lem:lip1}
		Let $\varphi \in \C^{k}(M,\CC)$ and $\psi,\tilde \psi \in \C^k(M,M)$. Then
\begin{align}
&\|\varphi \circ \psi - \tilde \varphi \circ \psi \|_{\C^k} \leq \sum_{j=0}^k \omega^j \sup_{|\alpha|=k-j}\|\partial^{\alpha}\varphi - \partial^{\alpha}\tilde \varphi\|_{\C^0} \prod_{i=j}^{k-1}\|(D\psi)^t\|_{\C^i} \label{eq:lip1}
\end{align}
and
\begin{align}
&\|\varphi \circ \psi - \varphi \circ \tilde \psi \|_{\C^k} \nonumber \\
&\leq \sum_{j=0}^k \omega^j \sup_{|\alpha|=k-j}\|(\partial^{\alpha}\varphi) \circ \psi - (\partial^{\alpha}\varphi) \circ \tilde \psi\|_{\C^0} \prod_{i=j}^{k-1}\|(D\psi)^t\|_{\C^i} \nonumber \\
&+\sum_{j=1}^k \sup_{|\alpha|=k-j} \|\partial^{\alpha}\varphi \circ \tilde \psi\|_{\C^{j-1}} \|(D\psi)^t-(D\tilde \psi)^t\|_{\C^{j-1}}\prod_{i=j}^{k-1}\|(D\psi)^t\|_{\C^{i}}. \label{eq:lip2}
\end{align}
\end{lemma}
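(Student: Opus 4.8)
The plan is to prove both estimates by induction on $k$, using the recursive identity for the weighted norm in \eqref{eq:norm-q-def} together with the sub-multiplicativity of $\|\cdot\|_{\C^{\rho}}$ from Lemma \ref{lem:prop-norm}. (Implicitly $\tilde\varphi\in\C^k(M,\CC)$ as well, and the weight written $\omega$ in the statement is the parameter $\param$ of \eqref{def of the C norm}; note that only the differences $\partial^{\alpha}\varphi-\partial^{\alpha}\tilde\varphi$ enter \eqref{eq:lip1}.) For $k=0$ both inequalities are immediate: the left-hand sides are ordinary suprema, only the $j=0$ term on the right survives (the empty product being $1$, and $\|g\circ\psi\|_{\C^0}\le\|g\|_{\C^0}$ for any $g$), while the second sum in \eqref{eq:lip2} is empty.

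For \eqref{eq:lip1} the key point is that no genuine telescoping is needed, since $\varphi\circ\psi-\tilde\varphi\circ\psi=g\circ\psi$ with $g=\varphi-\tilde\varphi$; the claim is a sharpening of the composition bound of Lemma \ref{lem:prop-norm} in which $\|g\|_{\C^m}$ is replaced by the smaller quantity $\sup_{|\alpha|=m}\|\partial^{\alpha}g\|_{\C^0}$. To obtain it I run the induction: assuming $\varphi,\tilde\varphi\in\C^{k+1}$, by \eqref{eq:norm-q-def}
\[
\|g\circ\psi\|_{\C^{k+1}}=\param^{k+1}\|g\circ\psi\|_{\C^0}+\sup_i\|\partial_{x_i}(g\circ\psi)\|_{\C^k},
\]
and $\partial_{x_i}(g\circ\psi)=\sum_{l}\bigl((\partial_{x_l}g)\circ\psi\bigr)(D\psi)_{li}$. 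One bounds the second term, via sub-multiplicativity, by $\|(\nabla g)\circ\psi\|_{\C^k}\,\|(D\psi)^t\|_{\C^k}$ (up to a dimensional constant), applies the order-$k$ inductive hypothesis to $(\nabla g)\circ\psi$ — whose components are the differences $\partial_{x_l}\varphi-\partial_{x_l}\tilde\varphi$ — and reindexes. The term $\param^{k+1}\|g\circ\psi\|_{\C^0}$ supplies the top ($j=k+1$) contribution, while the factor $\|(D\psi)^t\|_{\C^k}$ extends each product $\prod_{i=j}^{k-1}$ to $\prod_{i=j}^{k}$ and raises each index $\sup_{|\alpha|=k-j}$ to $\sup_{|\alpha|=k+1-j}$, reproducing \eqref{eq:lip1} at order $k+1$.

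For \eqref{eq:lip2} the same scheme applies, but now the difference is genuinely telescoped at each differentiation. Again it suffices to control $\sup_i\|\partial_{x_i}(\varphi\circ\psi)-\partial_{x_i}(\varphi\circ\tilde\psi)\|_{\C^k}$, and one writes
\[
\partial_{x_i}(\varphi\circ\psi)-\partial_{x_i}(\varphi\circ\tilde\psi)=\sum_{l}\bigl((\partial_{x_l}\varphi)\circ\psi-(\partial_{x_l}\varphi)\circ\tilde\psi\bigr)(D\psi)_{li}+\sum_{l}\bigl((\partial_{x_l}\varphi)\circ\tilde\psi\bigr)\bigl((D\psi)_{li}-(D\tilde\psi)_{li}\bigr).
\]
The first sum is treated by sub-multiplicativity and the order-$k$ inductive hypothesis (in the form \eqref{eq:lip2}) applied to $(\nabla\varphi)\circ\psi-(\nabla\varphi)\circ\tilde\psi$, followed by multiplication by $\|(D\psi)^t\|_{\C^k}$: its ``composition-difference'' part, together with $\param^{k+1}\|\varphi\circ\psi-\varphi\circ\tilde\psi\|_{\C^0}$, rebuilds the first sum of \eqref{eq:lip2} at order $k+1$, while its ``$\|(D\psi)^t-(D\tilde\psi)^t\|$'' part feeds the second sum. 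The last sum above, bounded by sub-multiplicativity using $(D\psi)_{li}-(D\tilde\psi)_{li}=\bigl((D\psi)^t-(D\tilde\psi)^t\bigr)_{il}$, supplies the remaining ($j=k+1$) term of the second sum.

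The main obstacle is the bookkeeping: one must check that all the index shifts are consistent — products running exactly over $i=j,\dots,k-1$, suprema over $|\alpha|=k-j$, the $\C^{j-1}$-norms and $\|(D\psi)^t-(D\tilde\psi)^t\|_{\C^{j-1}}$-factors in the second sum, and the powers of the weight $\param$ — and, crucially, that the proliferation of terms produced by differentiating a composition repeatedly is absorbed \emph{without} genuine binomial factors, exactly as in the proof of Lemma \ref{lem:prop-norm} (Lemma 2.9 of \cite{BL21}). Throughout, the elementary identifications $\|(D\psi)_{\cdot i}\|_{\C^{\rho}}\le\|(D\psi)^t\|_{\C^{\rho}}$ and their analogues for $(\nabla\varphi)\circ\psi$ are used to pass between vectors, matrices and their transposes.
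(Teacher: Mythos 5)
Your proposal is correct and follows essentially the same route as the paper: induction on $k$ via the recursion \eqref{eq:norm-q-def}, the chain rule plus sub-multiplicativity to reduce to $(\nabla\varphi-\nabla\tilde\varphi)\circ\psi$ for \eqref{eq:lip1}, and the two-term telescoping of $\partial_{x_i}(\varphi\circ\psi)-\partial_{x_i}(\varphi\circ\tilde\psi)$ for \eqref{eq:lip2}. The bookkeeping you flag as the main obstacle is exactly what the paper's proof carries out, and your index shifts match theirs.
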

\begin{proof}
	We are going to prove both formulas by induction. First recall that
	\begin{equation}
	\|\eta\|_{\C^{\rho+1}}=\param^{\rho+1}\|\eta\|_{\C^0}+\sup_i\|\partial_{x_i}\eta\|_{\C^{\rho}}. \label{eq:ind}
	\end{equation}
	We first prove \eqref{eq:lip1}. Using the above formula we compute
	\begin{align*}
	&\|\varphi \circ \psi - \tilde \varphi \circ \psi \|_{\C^{k+1}}\\
	&\leq\param^{k+1}\|\varphi - \tilde \varphi \|_{\C^0}+\sup_i\|\partial_{x_i}(\varphi \circ \psi - \tilde \varphi \circ  \psi )\|_{\C^{k}} \\
	&\leq \param^{k+1}\|\varphi - \tilde \varphi \|_{\C^0}+\sup_i\|\partial_{x_i}\varphi - \partial_{x_i}\tilde \varphi\|_{\C^{k}}\|(D\psi)^t\|_{\C^k} \\
	& \leq \param^{k+1}\|\varphi - \tilde \varphi \|_{\C^0}\\
	&+\sup_i\sum_{j=0}^k \omega^j \sup_{|\alpha|=k-j}\|\partial^{\alpha}\partial_{x_i}\varphi - \partial^{\alpha}\partial_{x_i}\tilde\varphi\|_{\C^0} \prod_{i=j}^{k-1}\|(D\psi)^t\|_{\C^i} \cdot \|(D\psi)^t\|_{\C^k} \\
	& \leq \param^{k+1}\|\varphi - \tilde \varphi \|_{\C^0}+\sum_{j=0}^k \omega^j \sup_{|\alpha|=k+1-j}\|\partial^{\alpha}\varphi - \partial^{\alpha}\tilde\varphi\|_{\C^0} \prod_{i=j}^{k}\|(D\psi)^t\|_{\C^i}\\
	&=\sum_{j=0}^{k+1} \omega^j \sup_{|\alpha|=k+1-j}\|\partial^{\alpha}\varphi - \partial^{\alpha}\tilde\varphi\|_{\C^0} \prod_{i=j}^{k}\|(D\psi)^t\|_{\C^i}
	\end{align*}
	We now prove \eqref{eq:lip2} by using \eqref{eq:ind}.
	\begin{align*}
	\|\varphi \circ \psi - \varphi \circ \tilde \psi \|_{\C^{k+1}}&=\param^{k+1}\|\varphi \circ \psi - \varphi \circ \tilde \psi \|_{\C^0}+\sup_i\|\partial_{x_i}(\varphi \circ \psi - \varphi \circ \tilde \psi )\|_{\C^{k}} \\
	&\leq \param^{k+1}\|\varphi \circ \psi - \varphi \circ \tilde \psi \|_{\C^0}\\
	&+\sup_i\|(\partial_{x_i}\varphi) \circ \psi - (\partial_{x_i}\varphi) \circ \tilde \psi \|_{\C^{k}}\|(D\psi)^t\|_{\C^k}\\
	&+\sup_i\|(\partial_{x_i}\varphi) \circ \tilde \psi \|_{\C^{k}}\|(D\psi)^t-(D\tilde \psi)^t\|_{\C^k}
	\end{align*}	
	For the second term we use the inductive assumption. This gives us the following:
	\begin{align*}
	&\|\varphi \circ \psi - \varphi \circ \tilde \psi \|_{\C^{k+1}} \\
	&\leq \param^{k+1}\|\varphi \circ \psi - \varphi \circ \tilde \psi \|_{\C^0}\\
	&+ \sum_{j=0}^k \omega^j \sup_{|\alpha|=k-j,i}\|(\partial^{\alpha}\partial_{x_i}\varphi) \circ \psi - (\partial^{\alpha}\partial_{x_i}\varphi) \circ \tilde \psi\|_{\C^0} \prod_{i=j}^{k}\|(D\psi)^t\|_{\C^i} \\
	&+\sum_{j=1}^k \sup_{|\alpha|=k-j,i} \|\partial^{\alpha}\partial_{x_i}\varphi \circ \tilde \psi\|_{\C^{j-1}} \|(D\psi)^t-(D\tilde \psi)^t\|_{\C^{j-1}}\prod_{i=j}^{k}\|(D\psi)^t\|_{\C^{i}} \\
	&+\sup_i\|(\partial_{x_i}\varphi) \circ \tilde \psi \|_{\C^{k}}\|(D\psi)^t-(D\tilde \psi)^t\|_{\C^k}\\
	&= \param^{\rho+1}\|\varphi \circ \psi - \varphi \circ \tilde \psi \|_{\C^0}\\
	&+ \sum_{j=0}^k \omega^j \sup_{|\alpha|=k+1-j}\|(\partial^{\alpha}\varphi) \circ \psi - (\partial^{\alpha}\varphi) \circ \tilde \psi\|_{\C^0} \prod_{i=j}^{k}\|(D\psi)^t\|_{\C^i} \\
	&+\sum_{j=1}^k \sup_{|\alpha|=k+1-j} \|\partial^{\alpha}\varphi \circ \tilde \psi\|_{\C^{j-1}} \|(D\psi)^t-(D\tilde \psi)^t\|_{\C^{j-1}}\prod_{i=j}^{k}\|(D\psi)^t\|_{\C^{i}} \\
	&+\sup_i\|(\partial_{x_i}\varphi) \circ \tilde \psi \|_{\C^{k}}\|(D\psi)^t-(D\tilde \psi)^t\|_{\C^k} 
\end{align*}
\begin{align*}
	&= \sum_{j=0}^{k+1} \omega^j \sup_{|\alpha|=k+1-j}\|(\partial^{\alpha}\varphi) \circ \psi - (\partial^{\alpha}\varphi) \circ \tilde \psi\|_{\C^0} \prod_{i=j}^{k}\|(D\psi)^t\|_{\C^i} \\
	&+\sum_{j=1}^{k+1} \sup_{|\alpha|=k+1-j} \|\partial^{\alpha}\varphi \circ \tilde \psi\|_{\C^{j-1}} \|(D\psi)^t-(D\tilde \psi)^t\|_{\C^{j-1}}\prod_{i=j}^{k}\|(D\psi)^t\|_{\C^{i}}.
	\end{align*}
\end{proof}
We obtain a useful corollary:
\begin{corollary} \label{cor:lip}
As a consequence of \eqref{eq:lip1}, we have
\begin{equation}
\|\varphi \circ \psi - \tilde \varphi \circ \psi \|_{\C^k} \leq C\|\varphi-\tilde \varphi\|_{\C^{k}}  \|D\psi\|_{\C^{k}}^k. \label{eq:lip1cor}
\end{equation}		
If furthermore $\varphi \in \C^{k+1}(M,\CC)$, then by \eqref{eq:lip2} we have
\begin{equation}
\begin{split} 
\|\varphi \circ \psi - \varphi \circ \tilde \psi \|_{\C^k}  \leq & C\|\varphi\|_{\C^{k+1}}  \|D\psi\|_{\C^{k}}^k\\
&\times \left(\|\psi-\tilde \psi \|_{\C^0}+\|D\tilde \psi\|_{\C^{k}}^k\|D\psi-D\tilde \psi\|_{\C^{k-1}}\right). \label{eq:lip2cor}
\end{split}
\end{equation}	
\end{corollary}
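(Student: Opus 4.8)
The plan is to deduce both inequalities directly from Lemma~\ref{lem:lip1} by estimating each of the finitely many summands in \eqref{eq:lip1} and \eqref{eq:lip2} crudely and adding up. The three elementary facts I would use throughout are: (i) for a multi-index with $|\alpha|=m\le\rho$ one has $\param^{\rho-m}\|\partial^\alpha\varphi\|_{\C^0}\le\|\varphi\|_{\C^\rho}$, straight from the definition of the weighted $\C^\rho$-norm, so that whenever we replace a single seminorm by a full $\C^\rho$-norm the weight factors occurring in Lemma~\ref{lem:lip1} get absorbed; (ii) $\|(D\psi)^t\|_{\C^i}\le\|(D\psi)^t\|_{\C^k}=\|D\psi\|_{\C^k}$ for all $i\le k$ (transposition only permutes matrix entries and leaves the norm unchanged), together with the harmless normalisation $\|D\psi\|_{\C^k}\ge1$ --- otherwise replace it by $1+\|D\psi\|_{\C^k}$ --- which bounds every partial product $\prod_{i=j}^{k-1}\|(D\psi)^t\|_{\C^i}$, including the empty one at $j=k$, by $\|D\psi\|_{\C^k}^{\,k}$; and (iii) the mean value inequality $\|f\circ\psi-f\circ\tilde\psi\|_{\C^0}\le\sup_i\|\partial_{x_i}f\|_{\C^0}\,\|\psi-\tilde\psi\|_{\C^0}$.

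For \eqref{eq:lip1cor} I would apply (i) with $\rho=k$ to $\varphi-\tilde\varphi$ and (ii) to the product in each summand of \eqref{eq:lip1}, so the $j$-th term is $\le\|\varphi-\tilde\varphi\|_{\C^k}\,\|D\psi\|_{\C^k}^{\,k}$; summing the $k+1$ terms gives \eqref{eq:lip1cor} with the combinatorial factor absorbed into $C=C(k)$. For \eqref{eq:lip2cor} I would treat the two sums of \eqref{eq:lip2} separately. In the first sum, (iii) with $f=\partial^\alpha\varphi$, $|\alpha|=k-j$, bounds the $\C^0$-difference by $\sup_i\|\partial_{x_i}\partial^\alpha\varphi\|_{\C^0}\,\|\psi-\tilde\psi\|_{\C^0}$, and since $\partial_{x_i}\partial^\alpha\varphi$ has order $k-j+1$, fact (i) with $\rho=k+1$ absorbs the weight and leaves $\|\varphi\|_{\C^{k+1}}\|\psi-\tilde\psi\|_{\C^0}$; combined with (ii) this gives the first term $C\|\varphi\|_{\C^{k+1}}\|D\psi\|_{\C^k}^{\,k}\|\psi-\tilde\psi\|_{\C^0}$ of the bracket. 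In the second sum there is no weight factor, so I would bound $\sup_{|\alpha|=k-j}\|\partial^\alpha\varphi\circ\tilde\psi\|_{\C^{j-1}}$ using the composition estimate of Lemma~\ref{lem:prop-norm}: since $|\alpha|+(j-1)=k-1$, the $\C^l$-norms of $\partial^\alpha\varphi$ that occur ($l\le j-1$) are all $\le\|\varphi\|_{\C^{k-1}}\le\|\varphi\|_{\C^{k+1}}$ by (i), while the accompanying derivative factors are $\le C\|D\tilde\psi\|_{\C^k}^{\,k}$ by (ii); combining this with $\|(D\psi)^t-(D\tilde\psi)^t\|_{\C^{j-1}}\le\|D\psi-D\tilde\psi\|_{\C^{k-1}}$ and $\prod_{i=j}^{k-1}\|(D\psi)^t\|_{\C^i}\le\|D\psi\|_{\C^k}^{\,k}$ produces the second term $C\|\varphi\|_{\C^{k+1}}\|D\psi\|_{\C^k}^{\,k}\|D\tilde\psi\|_{\C^k}^{\,k}\|D\psi-D\tilde\psi\|_{\C^{k-1}}$. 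Adding the two sums yields \eqref{eq:lip2cor}.

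The only genuine bookkeeping to watch is that, in each summand, the weight power (written $\omega^j$ in Lemma~\ref{lem:lip1} but meant as $\param^j$) is always matched by the differentiation order that appears there, so that passing to the full $\C^k$- or $\C^{k+1}$-norm costs nothing but a constant; this is exactly why the hypothesis $\varphi\in\C^{k+1}$ is needed for \eqref{eq:lip2cor}, one extra order of $\varphi$ being consumed by the mean value step in the first sum. The appeal to Lemma~\ref{lem:prop-norm} for the composition $\partial^\alpha\varphi\circ\tilde\psi$ in the second sum is what generates the extra factor $\|D\tilde\psi\|_{\C^k}^{\,k}$ in \eqref{eq:lip2cor}; everything else is routine manipulation of the weighted norms.
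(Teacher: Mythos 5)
Your derivation is correct and is exactly the (omitted) argument the paper has in mind: the corollary is stated as an immediate consequence of Lemma \ref{lem:lip1}, obtained by absorbing each weighted seminorm into the appropriate full $\C^k$- or $\C^{k+1}$-norm, bounding every partial product of $\|(D\psi)^t\|_{\C^i}$ by $\|D\psi\|_{\C^k}^k$, and using the mean value inequality plus Lemma \ref{lem:prop-norm} for the two sums in \eqref{eq:lip2}. You also correctly flag the two points the paper glosses over (the $\omega$ versus $\param$ notation and the harmless normalisation $\|D\psi\|_{\C^k}\ge 1$), so nothing further is needed.
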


\subsection{Foliations}\label{sec:foliation}\ \\ 
To define the anisotropic spaces $\B^{0,q}$ and $\B^{1,q}$, we need to define a class of (stable) foliations adapted to the cone field, whose representation in local coordinates has certain uniform regularity. Let us recall be basic defintions from \cite{BL21}.


\begin{definition} \label{def:foliation0}
A $\mathcal C^r$ $t$-dimensional foliation $W$ is a collection $\{W_\alpha\}_{\alpha\in A}$, for some set $A$, such that the $W_\alpha$ are pairwise disjoint, $\cup_{\alpha\in A}W_\alpha=M$ and for each $\xi\in W_\alpha$ there exists a neighborhood $B(\xi)$ such that the connected component of $W_\alpha \cap B(\xi)$ containing $\xi$, call it $W(\xi)$, is a $\cC^r$ $t$-dimensional open submanifold of $M$. We will call $\cF^r$ the set of $\cC^r$ $d_s$-dimensional foliations.
\end{definition}

\begin{definition}\label{def:F}
A foliation $W$ is adapted to the cone field $\cC$ if, for each $\xi\in M$, $T_\xi W(\xi)\subset C(\xi)$. Let $\cF^r_\cC$ be the set of $\cC^r$ $d_s$-dimensional foliations adapted to $\cC$.
\end{definition}

Given a $d_s$-foliation adapted to $\cC$ we can associate to it local coordinates as follows. Let $\delta_0>0$ be sufficiently small so that for each $\xi\in M$ there exists a chart $(V_i,\phi_i)$ with $\xi\in V_i$ and such that $U_i:=\phi_i(V_i)$ contains the ball $B_{\delta_0}(\phi_i(\xi))$.\footnote{Here, and in the following, we use $B_\delta(x)$ to designate $\{z\in\bR^{d'}\:\;: \|x-z\|\leq \delta\}$ for any $d'\in\bN$.} Also, choose $U^0=U^0_u\times U^0_s\subset \bR^{d_u}\times\bR^{d_s}$ with $U^0_u= B_{\delta_0/2}(0)$, $U^0_s= B_{\delta_0/2}(0)$. Next, for each $z\in U_i$, let $W(z)$ be the connected component of $\phi_i(W)$ containing $z$.\footnote{ Refer to Definition \ref{def:foliation0} for the exact meaning of ``connected component". Also note the abuse of notation since we use the same name for the sub-manifond in $M$ and its image in the chart.} Define the function $F_\xi:U^0\to\bR^{d_u}$ by $\{ (F_\xi(x,y)+x_\xi,y_\xi+y)\}=\{(w,y+y_\xi)\}_{w\in \bR^{d_u}}\cap W(x+x_\xi,y_\xi)$, where $(x_\xi,y_\xi)=\phi_i(\xi)$.\footnote{ The fact that the intersection is non void and consists of exactly  one point follows trivially from the fact that the foliation is adapted to the cone field, hence the two manifolds are transversal.}
That is, $W(x+x_\xi,y_\xi)$ is exactly  the graph of the function $F_\xi(x,\cdot)+x_\xi$. Moreover, 
 \begin{equation}\label{eq:F-def}
 F_\xi(x,0)=x.
 \end{equation}
In addition, we ask $\delta_0$ to be small enough that the expression of $DT$ in the above charts is roughly constant. See Lemma B.5 \cite{BL21}.


Now $\mathbb F_\xi(x,y)=(F_\xi(x,y),y)$, $(x,y) \in U^0$ describes the foliation locally. Denote by $\mathbb F$ the collection of maps $\{\mathbb F_\xi\}$.

For each integer $r\geq 2$ and $L>0$, let
\[
\begin{split}
&\overline{\mathcal F}^r_\C=\left\{W\in\mathcal F^r_\C:  \mathbb F \in\C^r(U^0,\RR^{d})\right\}\\
&\mathcal W_L^r=\Big\{W\in\overline{\mathcal F}^r_\C :  \sup_\xi \sup_{x\in U^0_u} \sup_{|\alpha|=k}\|\partial^\alpha_y F_\xi(x,\cdot)\|_{\C^0(U^0_s,\RR^{d_u})}\leq L^{(k-1)^2} , 2\leq k\leq r;\\
 &\hskip 1.5cm \sup_\xi \sup_{x\in U^0_u} \sup_{|\alpha|=k}\|\partial^\alpha_y  H^{F_\xi}(x,\cdot)\|_{\C^{0}(U^0_s,\RR^{d_s})}\leq L^{(k+1)^2}, 0\leq k\leq r-2\Big\}, 
\end{split}
\]
where 
\begin{equation*}
\begin{split}
 H^{F_{\xi}}(x,y)&=\sum_{j=1}^{d_u}\left[\partial_{x_j}\left(\left[\partial_y (F_{\xi})_j\right]\circ \mathbb F_\xi^{-1}\right)\right]\circ \mathbb F_{\xi}(x,y)\\
 &=\sum_{ij}\partial_{x_i}\partial_y (F_{\xi})_j\cdot (\partial_x F_{\xi})^{-1}_{ij}.
\end{split}
\end{equation*}
For each $\varphi\in\C^{r}(M, \CC^l)$ and $W\in\mathcal F^r_{\C}$ let $\varphi_{\xi,x}(\cdot)=\varphi\circ \phi_{i}^{-1}\circ \FF_{\xi}(x,\cdot)$, $q\le r$ and define
\begin{equation}\label{eq:norm_dual}
\| \varphi\|_q^W:=\sup_{\xi\in M}\sup_{x\in U^0_u}\|\varphi_{\xi,x}\|_{\C^q(U^0_s, \CC^{l})}=\sup_{\xi\in M}\sup_{x\in U^0_u}\sum_{j=1}^l\|(\varphi_{\xi,x})_j\|_{\C^q(U^0_s, \CC)}.
\end{equation}
We are finally able to define the sets $\Omega_{L,q,l}$, $L > 0$, $r \geq 2$, $q \in \NN \cup \{0\}$ as
\begin{equation}\label{eq:measure}
\Omega_{L,q,l}=\left\{(W,\varphi)\in \mathcal W_L^r\times \C^q(M, \CC^l)\;:\;\|\varphi\|_q^W\le 1\right\}.
\end{equation}
\section{Some properties of the coupled map $T_h^{\eps}$}
\begin{lemma}\label{lem:anosovcoup}
There exists $\eps^* > 0$ such that 
\begin{equation} \label{eq:seconeinv}
D_{\xi}(T_h^\eps)^{-1} C(\xi)\subset \text{int}(C((T_h^\eps)^{-1}(\xi)))\cup\{0\}
\end{equation}
for all $|\eps|<\eps^*$ and $h\in \B_1^{0,q}$;
moreover there exists $\lambda>1, \nu\in (0,1)$, $c\in(0, 1)$ such that
\begin{equation}\label{eq:anosovcoup}
\begin{split}
&\inf_{\xi\in M}\inf_{v\in C(\xi)} \|D_\xi (T_{h_{n-1}}^{\eps}\circ\cdots\circ  T_{h_0}^{\eps})^{-1}v\|> c\nu^{-n}\|v\|\\
& \inf_{\xi\in M}\inf_{v\not\in C(\xi)} \|D_\xi (T_{h_{n-1}}^{\eps}\circ\cdots\circ T_{h_0}^{\eps})v\|> c\lambda^{n}\|v\|
\end{split}
\end{equation}
for all $|\eps|<\eps^*$ and any sequence $h_0,\dots,h_{n-1}\in \B_1^{0,q}$, $n \in \mathbb{N}$.
\end{lemma}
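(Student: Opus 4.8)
The plan is to regard every $T_h^\eps$ — and, more importantly, every finite concatenation $G_n := T_{h_{n-1}}^\eps\circ\cdots\circ T_{h_0}^\eps$ — as a $\C^1$-small perturbation of $T$ (respectively of a suitable iterate $T^{N_0}$), and to exploit the fact that the cone inclusion and the hyperbolicity estimates in \eqref{eq:anosov} are \emph{strict}, hence stable under such perturbations. The one uniform input needed is Remark~\ref{rem:anosov}: from \eqref{eq:CouplingAssum2} one gets $d_{\C^{r}}(T,T_h^\eps)\le C|\eps|$ with $C$ \emph{independent of} $h\in\cM_1(M)\supset\B_1^{0,q}$, and this is precisely what forces all the constants below to be uniform in the sequence $(h_i)$.

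First I would prove the cone invariance \eqref{eq:seconeinv} for a single map. Since $M$ is compact, $\xi\mapsto C(\xi)$ is continuous with each $C(\xi)$ closed, and $D_\xi T^{-1}C(\xi)\subset\mathrm{int}(C(T^{-1}\xi))\cup\{0\}$ strictly, a compactness argument produces a uniform $\delta_0>0$: in projective coordinates, $D_\xi T^{-1}$ carries the compact set of directions of $C(\xi)$ at least $\delta_0$ into the interior of $C(T^{-1}\xi)$. Uniform continuity of the cone field together with $d_{\C^1}((T_h^\eps)^{-1},T^{-1})=O(|\eps|)$ — which simultaneously controls the displacement of the basepoint $(T_h^\eps)^{-1}\xi$ from $T^{-1}\xi$ and of the differential — then shows that for $|\eps|$ below a threshold independent of $h$ one still has $D_\xi(T_h^\eps)^{-1}C(\xi)\subset\mathrm{int}(C((T_h^\eps)^{-1}\xi))\cup\{0\}$. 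The same argument applied to $D_\xi T_h^\eps$ on the complementary (unstable) cone gives the dual inclusion; iterating over the sequence then yields $D_\xi G_n^{-1}C(\xi)\subset\mathrm{int}(C(G_n^{-1}\xi))\cup\{0\}$, and its unstable-cone counterpart, for every $n$ and every choice of $h_0,\dots,h_{n-1}$.

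To get \eqref{eq:anosovcoup}, fix once and for all an integer $N_0$ with $c_0\nu_0^{-N_0}\ge 2$ and $c_0\lambda_0^{N_0}\ge 2$, so that $\|D_\xi T^{-N_0}v\|\ge 2\|v\|$ on $C(\xi)$ and $\|D_\xi T^{N_0}v\|\ge 2\|v\|$ off $C(\xi)$. Because $N_0$ is fixed, the composition of any $N_0$ of the maps $T_{h_i}^\eps$ lies within $C(N_0)|\eps|$ of $T^{N_0}$ in $\C^1$, with $C(N_0)$ uniform in the choices, so for $|\eps|$ small every length-$N_0$ block $B$ satisfies $\|D B^{-1}v\|\ge\tfrac32\|v\|$ on the stable cone and $\|D Bv\|\ge\tfrac32\|v\|$ on the unstable cone. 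Writing $n=kN_0+s$ with $0\le s<N_0$, the cone invariance of the previous paragraph keeps the relevant vector in the appropriate cone across block boundaries, so the $k$ factors $\tfrac32$ multiply, while the short remainder block costs at worst $c_1^{\,s}\ge c_1^{\,N_0}$, where $c_1>0$ is a uniform lower bound for the least singular value of $D(T_h^\eps)^{\pm1}$ (again from $\C^1$-closeness to $T$). Hence $\|D_\xi G_n^{-1}v\|\ge c_1^{\,N_0}(\tfrac32)^{k}\|v\|\ge c\,\nu^{-n}\|v\|$ on $C(\xi)$ with $\nu:=(\tfrac32)^{-1/N_0}\in(0,1)$ and some $c\in(0,1)$, and symmetrically $\|D_\xi G_n v\|\ge c\,\lambda^{n}\|v\|$ off $C(\xi)$ with $\lambda:=(\tfrac32)^{1/N_0}>1$.

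The only genuinely delicate point is the uniformity bookkeeping — that $\delta_0$, $C(N_0)$, $c_1$ and the final threshold $\eps^*$ may all be chosen independently of $(h_i)$ and of $n$ — and this is exactly where the $h$-independent estimate of Remark~\ref{rem:anosov} enters, together with the fact that $N_0$ is selected before $\eps^*$. Everything else is the classical invariant-cone argument for sequential hyperbolic maps; the analogous statement for the interpolations $T_t=tT_{g_1}^\eps+(1-t)T_{g_2}^\eps$ needed in the proof of Lemma~\ref{lem:lip} (i.e.\ Lemma~\ref{lem:interanosovcoup}) follows by the same scheme, since $T_t$ is likewise $\C^{r}$-within-$O(|\eps|)$ of $T$ uniformly in $t$, in $\eps$, and in the $g_i$.
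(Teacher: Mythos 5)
Your argument is correct and rests on the same two pillars as the paper's proof: the $h$-uniform perturbation bound $d_{\C^r}(T,T_h^\eps)\le C|\eps|$ from Remark \ref{rem:anosov}, strict cone invariance propagated along the concatenation, and then multiplicativity of the one-block expansion estimates. The only substantive difference is how the constant $c_0$ in \eqref{eq:anosov} is absorbed: the paper performs a standard change of metric so that $c_0=1$, proves a single-step estimate $\|D_\xi(T_h^\eps)^{-1}v\|>\nu^{-1}\|v\|$ on the cone with $\nu=(1-\eps^*)^{-1}\nu_0$, and then runs a one-step induction on $n$, reverting to the original metric only at the end (which produces the constant $c$); you instead keep the original metric and group the maps into blocks of a fixed length $N_0$ chosen so that $T^{N_0}$ expands by a definite factor, comparing each block to $T^{N_0}$ in $\C^1$ and paying $c_1^{N_0}$ for the remainder. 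Both devices are standard and yield the same uniformity in $(h_i)$ and $n$; the adapted metric gives a slightly cleaner induction, while your block argument avoids the metric change and, as a bonus, you actually spell out the compactness argument behind the uniform cone invariance, which the paper simply asserts by fixing $\eps^*$ so that \eqref{eq:seconeinv} holds.
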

\begin{proof}
Fix $\eps^* > 0$ such that \eqref{eq:seconeinv} holds. Define $\nu = (1-\eps^*)^{-1}\nu_0$ and $\lambda = (1-\eps^*)\lambda_0$, where $\nu_0$ and $\lambda_0$ are given by \eqref{eq:anosov}. Decrease $\eps^*$ further if necessary so that that $0<\nu<1<\lambda$.
Via a standard change of metric we may assume that in \eqref{eq:anosov} $c_0 = 1$.
Note that for any $h$, $|\eps|<\eps^*$ and $v\in C(\xi)$
\begin{align*}
\|D_\xi (T_{h}^{\eps})^{-1}v\|&\ge\|D_\xi T^{-1}v\| -\|(D_\xi T^{-1}-D_\xi (T_{h}^{\eps})^{-1})v\|\\
&> \nu_0^{-1}\|v\|-|\eps| \nu_0^{-1}\|v\|=
\nu^{-1}\|v\|.
\end{align*}
We then proceed by induction on $n$. Assume that for any $v\in C(\xi)$
$$\|D_\xi (T_{h_{n-2}}^{\eps}\circ\cdots\circ T_{h_0}^{\eps})^{-1}v\|> \nu^{-(n-1)}\|v\|.$$
Write $T_{h_{n-2}}^{\eps}\circ\cdots\circ T_{h_0}^{\eps}=T^{\eps}_{n-2}$. Then using the above two inequalities we obtain
\begin{align*}
&\|D_\xi (T_{h_{n-1}}^{\eps}\circ\cdots\circ T_{h_0}^{\eps})^{-1}v\|=\|D_\xi (T_{h_{n-1}}^{\eps}\circ T^{\eps}_{n-2})^{-1}v\| \\
&=\|D_{(T_{h_{n-1}}^{\eps})^{-1}(\xi)}(T_{n-2}^{\eps})^{-1} D_{\xi}(T_{h_{n-1}}^{\eps})^{-1} v\| \\
& > \nu^{-(n-1)}\|D_{\xi}(T_{h_{n-1}}^{\eps})^{-1} v\| > \nu^{-n}\|v\|.
\end{align*}
Similarly, for $v\notin C(\xi)$ we obtain
$$\|D_\xi (T_{h_{n-1}}^{\eps}\circ\cdots\circ  T_{h_0}^{\eps})v\|> \lambda^n\|v\|.$$
Finally, returning to the original metric accounts for the constant $c$ in the statement of the lemma.
\end{proof}
\begin{remark}\label{re:allanosov}
Lemma \ref{lem:anosovcoup} implies that for $|\eps|$ small enough, each $T_h^\eps$ is an Anosov diffeomorphism and any concatenation $T_{h_{n-1}}^{\eps}\circ\cdots \circ T_{h_0}^{\eps}$ satisfies \eqref{eq:anosov} with uniform constant $c$ independent of $\eps$.
\end{remark}
\begin{lemma}\label{lem:interanosovcoup}
Let $T_t=tT_{h_1}^\eps+(1-t)T_{h_2}^\eps$ for $t \in [0,1]$ (understood in the charts defined in the proof of Lemma \ref{lem:lip}). There exists $\eps^{**} > 0$ such that 
\begin{equation} \label{eq:coneinv}
D_{\xi}T_t^{-1} C(\xi)\subset \text{int}(C(T_t^{-1}(\xi)))\cup\{0\}
\end{equation}
for all $|\eps|<\eps^{**}$ and $h_1,h_2\in \B_1^{0,q}$;
moreover there exists $\tilde\lambda>1, \tilde\nu\in (0,1)$, $\tilde c\in(0, 1)$ such that
\begin{equation}
\begin{split}
&\inf_{\xi\in M}\inf_{v\in C(\xi)} \|D_\xi T_t^{-n}v\|> \tilde  c\tilde \nu^{-n}\|v\|\\
& \inf_{\xi\in M}\inf_{v\not\in C(\xi)} \|D_\xi T_t^n v\|> \tilde c \tilde\lambda^{n}\|v\|
\end{split}
\end{equation}
for all $|\eps|<\eps^{**}$, $n \in \mathbb{N}$.
\end{lemma}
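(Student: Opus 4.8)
The plan is to mimic exactly the proof of Lemma \ref{lem:anosovcoup}, since $T_t = tT_{h_1}^\eps + (1-t)T_{h_2}^\eps$ is, in the charts, simply another $\C^r$-perturbation of $T$ whose size we can control uniformly in $t\in[0,1]$. First I would observe that, in the charts $\{U_j,\psi_j\}$ introduced in the proof of Lemma \ref{lem:lip}, one has
\[
d_{\C^r}(T, T_t) \le t\, d_{\C^r}(T, T_{h_1}^\eps) + (1-t)\, d_{\C^r}(T, T_{h_2}^\eps) \le C|\eps|
\]
uniformly in $t$, where the last bound uses Remark \ref{rem:anosov} (equivalently the consequence of \eqref{eq:CouplingAssum2} that $d_{\C^r}(T,T_h^\eps)\le C|\eps|$). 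Hence there is $\eps^{**}>0$ such that for $|\eps|<\eps^{**}$ the map $T_t$ is $\C^r$-close enough to $T$ that the cone inclusion \eqref{eq:coneinv} holds for all $t\in[0,1]$ with the \emph{same} cone field $\C$, by the standard openness of cone contraction under $\C^1$-perturbation.

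Next I would run the same two-line perturbation estimate as in Lemma \ref{lem:anosovcoup}: after a change of metric putting $c_0=1$ in \eqref{eq:anosov}, for $v\in C(\xi)$,
\[
\|D_\xi T_t^{-1}v\| \ge \|D_\xi T^{-1}v\| - \|(D_\xi T^{-1} - D_\xi T_t^{-1})v\| > \nu_0^{-1}\|v\| - C|\eps|\,\nu_0^{-1}\|v\| = \tilde\nu^{-1}\|v\|,
\]
with $\tilde\nu := (1 - C\eps^{**})^{-1}\nu_0$, and symmetrically $\|D_\xi T_t v\| > \tilde\lambda\|v\|$ for $v\notin C(\xi)$ with $\tilde\lambda := (1 - C\eps^{**})\lambda_0$; shrinking $\eps^{**}$ if necessary keeps $0<\tilde\nu<1<\tilde\lambda$. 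Because the cone field is invariant (by \eqref{eq:coneinv}), these one-step bounds iterate verbatim: for $v\in C(\xi)$, writing $T_t^{-n} = (T_t^{-1})^n$ and using that $D_\xi T_t^{-1}v \in C(T_t^{-1}\xi)$, an induction on $n$ identical to the one in Lemma \ref{lem:anosovcoup} gives $\|D_\xi T_t^{-n} v\| > \tilde\nu^{-n}\|v\|$, and likewise $\|D_\xi T_t^{n} v\| > \tilde\lambda^{n}\|v\|$ for $v\notin C(\xi)$. Returning to the original metric introduces the uniform constant $\tilde c\in(0,1)$, exactly as before.

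The only point requiring a little care — and the step I expect to be the mildest obstacle — is that $T_t$ is defined only locally, chart by chart, since the convex combination of diffeomorphisms is not intrinsically defined on $M$; this is why Lemma \ref{lem:lip} fixed charts $\{U_j,\psi_j\}$ with $U_j$ large enough to contain both $T_{h_1}^\eps V_i$ and $T_{h_2}^\eps V_i$. One has to check that the estimates above, which are local, patch together to a global statement about $D_\xi T_t$: this is routine because the hyperbolicity constants obtained are uniform over the (finitely many) charts and over $\xi$, and the cone field $\C$ is the fixed global one from subsection \ref{subsec:map}. Everything else is a line-by-line transcription of Lemma \ref{lem:anosovcoup} with $T_h^\eps$ replaced by $T_t$ and $\eps^*$ replaced by $\eps^{**}$, so I would simply state that "the proof follows that of Lemma \ref{lem:anosovcoup} verbatim, replacing $T_h^\eps$ by $T_t$ and using that $d_{\C^r}(T,T_t)\le C|\eps|$ uniformly in $t\in[0,1]$."
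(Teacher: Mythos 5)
Your proof is correct and follows essentially the same route as the paper's: a one-step perturbation estimate for $D_\xi T_t^{-1}$, cone invariance for $|\eps|$ small, and induction on $n$. The only cosmetic difference is that you compare $T_t$ directly with $T$ (via $d_{\C^r}(T,T_t)\le C|\eps|$ uniformly in $t$), whereas the paper writes $T_t=[Id+t\eps S]\circ T_{h_1}^\eps$ and compares with $T_{h_1}^\eps$, reusing the constants $\nu,\lambda$ from Lemma \ref{lem:anosovcoup}; both yield the same conclusion.
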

\begin{proof}
To simplify notation, let $T_i=T_{h_i}^\eps$. Write $T_2^{-1} \circ T_1 = Id + \eps S$. Then $T_t = [Id + t \cdot \eps S] \circ T_1$ and $T^{-1}_t = T_1^{-1} \circ [Id + t \cdot \eps S]^{-1}$. 

We can see now that it is possible to fix $\eps^{**} > 0$ such that \eqref{eq:coneinv} holds. Define $\tilde \nu = (1-\eps^{**})^{-1}\nu$ and $\tilde \lambda = (1-\eps^{**})\lambda$, where $\nu$ and $\lambda$ are given by \eqref{eq:anosovcoup}. Decrease $\eps^{**}$ further if necessary so that that $0<\tilde \nu<1<\tilde \lambda$.
Via a standard change of metric we may assume that in \eqref{eq:anosov} $c = 1$.
Note that for any $h_1,h_2$, $|\eps|<\eps^{**}$ and $v\in C(\xi)$
\begin{align*}
\|D_\xi T_t^{-1}v\|&\ge\|D_\xi T_1^{-1}v\| -\|(D_\xi T_1^{-1}-D_\xi ( T_1^{-1} \circ [Id + t \cdot \eps S]^{-1}))v\|\\
&> \nu^{-1}\|v\|-|\eps| \nu^{-1}\|v\|=
\tilde \nu^{-1}\|v\|.
\end{align*}
Similarly, for $v\notin C(\xi)$ we obtain
$$\|D_\xi T_t^n v\|> \tilde \lambda^n\|v\|.$$
Finally, returning to the original metric accounts for the constant $c$ in the statement of the lemma.
\end{proof}
\section{Projection along the unstable direction} \label{sec:projection}
Here we follow \cite{BL21} and introduce a way to project along the approximate stable and unstable directions.
We do this by introducing projectors $\pi^u,\pi^s$ which are only implicit in \cite{BL21}.
Note that the construction is local, so we can argue in one chart without further mentioning it. We start by recalling the construction in \cite{BL21}.

Consider the ``almost unstable" foliation $\Gamma=\{\gamma_s\}_{s\in\RR^{d_s}}$ made of the leaves $\gamma_s=\{(u,s)\}_{u\in\RR^{d_u}}$ and its image $T^n \Gamma$. The leaves of $T^n \Gamma$ can be expressed in the form $\{(x, \tilde G_n(x,y)\}$ for some function $\tilde G_n$, smooth in the $x$ variable,  with $\|\partial _x \tilde G_n\|\leq 1$ and the normalization $\tilde G_n(F(0,y),y)=y$. On the other hand, the leaves of $W$ have the form $\{(F(x,y),y)\}$. It is then natural to consider the change of variables $(x,y)=\Psi_n(x',y')$ where $(x,\tilde G_n(x,y'))=(F(x',y), y)$. Writing $\vf=(\vf_1,\vf_2)$, with $\vf_1\in \RR^{d_u}$, $\vf_2\in\RR^{d_s}$ we consider the decomposition defined in \cite[Equations (3.5), (3.6)]{BL21},\footnote{Note that in \cite{BL21} the projectors where not explicitly defined.}
\begin{equation}\label{eq:vfvw}
\begin{split}
\vf&(x,y)=:\pi^u\vf(x,y)+\pi^s\vf(x,y)\\
&=(v(x,y), \partial_x \tilde G_n(x,y') v(x,y))+(\partial_y F(x',y) w(x,y), w(x,y)).
\end{split}
\end{equation}
Where, 
\[
\begin{split}
&v(x,y)=(\Id-\partial_y F(x',y) \partial_x\tilde G_n(x,y'))^{-1}(\vf_1(x,y)-\partial_y F(x',y) \vf_2(x,y))\\
&w(x,y)= (\Id-\partial_x \tilde G_n(x,y')\partial_y F(x',y))^{-1}(\vf_2(x,y)-\partial_x \tilde G_n(x,y') \vf_1(x,y)).
\end{split}
\]
Let us check that $\pi^u,\pi^s$ are indeed projectors. Note that
\[
\begin{split}
(\pi^u \vf)_1-\partial_y F (\pi^u \vf)_2&=[\Id-\partial_y F \partial_x\tilde G_n](\Id-\partial_y F \partial_x\tilde G_n)^{-1}(\vf_1-\partial_y F \vf_2)\\
&=\vf_1-\partial_y F \vf_2,
\end{split}
\]
which immediately implies $(\pi^u)^2=\pi^u$. The computation for $\pi^s$ is similar.

The key properties of the above projectors are as follows.

By \cite[Equation (3.10)]{BL21} we have, for $(W,\vf)\in\Omega_{L,q+2,d}$,
\begin{equation}\label{eq:piu}
\|(DT^{-n})^{-1}\circ T^{-n}\pi^u\vf\|^{W}_{q+2}\leq C\lambda^{-n}\|\vf\|^W_{q+2}+\frac{C_n}{\varpi}\| \vf\|^{W}_{q+1}.
\end{equation}
In addition, by \cite[Equation (3.8)]{BL21}, we have
\begin{equation}\label{eq:pis}
\|\sum_{i=1}^d\partial_{x_i}[(DT^{-n})^{-1}\pi^s\circ T^{n}\vf\circ T^{n}]_i\|^{T^{-n}W}_{q+1}\leq C_n\|\vf\|^W_{q+2}.
\end{equation}
The second of \cite[Equation (3.8)]{BL21} implies also
\begin{equation}\label{eq:pis2}
\|(DT^{-n})^{-1}\circ T^{-n}\pi^s \vf\|^{W}_{q+2}\leq C_{n,\varpi}\|\vf\|^W_{q+2}.
\end{equation}


\begin{thebibliography}{99}
\bibitem{BL21} Bahsoun, W., \& Liverani, C. (2022). Anosov diffeomorphisms, anisotropic BV spaces and regularity of foliations. \emph{Ergodic Theory and Dynam. Systems}, 42(8), 2431-2467.
\bibitem{Ba} Baladi, V. (2018). \emph{Dynamical zeta functions and dynamical determinants for hyperbolic maps}. Springer International Publishing.
\bibitem{BT} Baladi, V., \& Tsujii, M. (2007). Anisotropic H\"older and Sobolev spaces for hyperbolic diffeomorphisms. In \emph{Annales de l'institut Fourier} (Vol. 57, No. 1, pp. 127-154).
\bibitem{BKST18} B\'alint, P., Keller, G., S\'elley, F. M., \& T\'oth, I. P. (2018). Synchronization versus stability of the invariant distribution for a class of globally coupled maps. \emph{Nonlinearity}, 31(8), 3770--3793.
\bibitem{BKZ} Bardet, J. B., Keller, G., \& Zweim\"uller, R. (2009). Stochastically stable globally coupled maps with bistable thermodynamic limit. \emph{Communications in Mathematical Physics}, 292(1), 237-270.
\bibitem{BA11} Bick, C., Timme, M., Paulikat, D., Rathlev, D., \& Ashwin, P. (2011). Chaos in symmetric phase oscillator networks. \emph{Physical Review Letters}, 107(24), 244101.
\bibitem{Bi21} Bick, C., Gross, E., Harrington, H. A., \& Schaub, M. T. (2021). What are higher-order networks?. arXiv preprint arXiv:2104.11329.
\bibitem{Blank} Blank, M. L. (2011, February). Self-consistent mappings and systems of interacting particles. In \emph{Doklady Mathematics} (Vol. 83, No. 1, pp. 49-52). SP MAIK Nauka/Interperiodica.
\bibitem{BKL} Blank, M., Keller, G., \& Liverani, C. (2002). Ruelle-Perron-Frobenius spectrum for Anosov maps. \emph{Nonlinearity}, 15(6), 1905.
\bibitem{BS88} Bunimovich, L. A., \& Sinai, Y. G. (1988). Spacetime chaos in coupled map lattices. \emph{Nonlinearity}, 1(4), 491.
\bibitem{DKL21} Demers, M. F., Kiamari, N., \& Liverani, C. (2021). Transfer operators in hyperbolic dynamics. An introduction., 33 Colloquio Brasilero de Matematica. Brazilian Mathematics Colloquiums series, Editora do IMPA. pp.252. ISBN 978-65-89124-26-9.
\bibitem{DZ15} Dyatlov, S., \& Zworski, M. (2015). Stochastic stability of Pollicott-Ruelle resonances. \emph{Nonlinearity}, 28(10), 3511.
\bibitem{Euler} Euler, L. (1757). Principes g\'en\'eraux du mouvement des fluides. \emph{M\'emoires de l'acad\'emie  des sciences de Berlin}, 274-315.
\bibitem{B14} Fernandez, B. (2014). Breaking of ergodicity in expanding systems of globally coupled piecewise affine circle maps. \emph{Journal of Statistical Physics}, 154(4), 999-1029.
\bibitem{G21} Galatolo, S. (2022). Self-consistent transfer operators: Invariant measures, convergence to equilibrium, linear response and control of the statistical properties. Communications in Mathematical Physics, 395(2), 715-772.
\bibitem{GaL} Galatolo, S., \& Lucena, R. (2020). Spectral Gap and quantitative statistical stability for systems with contracting fibers and Lorenz like maps. \emph{Discrete Continuous Dynamical Systems}, 40, no. 3, 1309--1360. 
\bibitem{Ga} G\"artner, J. (1988). On the McKean-Vlasov limit for interacting diffusions. \emph{Mathematische Nachrichten}, 137(1), 197-248.
\bibitem{G16} Golse, F. On the dynamics of large particle systems in the mean-field limit.
In Macroscopic and large scale phenomena: coarse graining, mean field limits and ergodicity.
Lecture Notes in Applied Mathematics and Mechanics (A. Muntean, J.
Rademacher and A. Zagaris Eds.) vol. 3, p. 1--144, Springer, 2016.
\bibitem{GL} Gou\"ezel, S., \& Liverani, C. (2006). Banach spaces adapted to Anosov systems. \emph{Ergodic Theory and Dynamical Systems}, 26(1), 189-217.
\bibitem{GL1} Gou\"ezel, S., \& Liverani, C. (2008). Compact locally maximal hyperbolic sets for smooth maps: fine statistical properties. \emph{Journal of Differential Geometry}, 79(3), 433-477.  
\bibitem{Hartree} Hartree, D. R. (1928, January). The wave mechanics of an atom with a non-Coulomb central field. Part I. Theory and methods. In Mathematical \emph{Proceedings of the Cambridge Philosophical Society} (Vol. 24, No. 1, pp. 89-110). Cambridge university press.
\bibitem{Kan93} Kaneko K. (Ed.), Theory and Applications of Coupled Map Lattices, Wiley, 1993.
\bibitem{K00} Keller, G. An ergodic theoretic approach to mean field coupled maps. Fractal geometry and stochastics II. Birkh\"auser, Basel, 2000. 183--208.
\bibitem{KL04} Keller, G., \& Liverani, C. (2006). Uniqueness of the SRB measure for piecewise expanding weakly coupled map lattices in any dimension. \emph{Communications in Mathematical Physics}, 262(1), 33--50. 
\bibitem{KL09} Keller, G., \& Liverani, C. (2009). Map lattices coupled by collisions. \emph{Communications in Mathematical Physics}, 291(2), 591-597.
\bibitem{Kur84} Kuramoto, Y. (1984). Chemical turbulence. In \emph{Chemical oscillations, waves, and turbulence} (pp. 111-140). Springer, Berlin, Heidelberg.
\bibitem{MV}  Mouhot, C., \& Villani, C. (2011). On {L}andau damping. \emph{Acta Mathematica}, 207(1), 29-201.
\bibitem{PS91} Pesin Ya, B., \& Sinai Ya, G. (1991). Space-time chaos in chains of weakly interacting hyperbolic mappings. \emph{Adv. Sov. Math}, 3, 165-98.
\bibitem{P20} Pereira, T., van Strien, S., \& Tanzi, M. (2020). Heterogeneously coupled maps: hub dynamics and emergence across connectivity layers. Journal of the European Mathematical Society, 22(7), 2183-2252.
\bibitem{Spo} Spohn, H. (2012). \emph{Large scale dynamics of interacting particles}. Springer Science \& Business Media.
\bibitem{ST21} S\'elley, F. M., \& Tanzi, M. (2021). Linear response for a family of self-consistent transfer operators. \emph{Communications in Mathematical Physics}, 382(3), 1601-1624.
\bibitem{V68} Vlasov, A. A. (1968). The vibrational properties of an electron gas. \emph{Soviet Physics Uspekhi}, 10(6), 721.
\end{thebibliography}
\end{document}